\numberwithin{equation}{section}
\newtheorem{theorem}{Theorem}
\newtheorem{proposition}[theorem]{Proposition}
\newtheorem{lemma}[theorem]{Lemma}
\newtheorem{cor}[theorem]{Corollary}
\theoremstyle{definition}
\newtheorem{definition}[theorem]{Definition}
\newtheorem{example}[theorem]{Example}
\newtheorem{remark}[theorem]{Remark}
\newtheorem{conj}[theorem]{Conjecture}
\newtheorem{quest}{Question}
\numberwithin{theorem}{section}
\numberwithin{remark}{section}
\numberwithin{example}{section}
\numberwithin{definition}{section}
\numberwithin{proposition}{section}
\numberwithin{assumption}{section}
\numberwithin{lemma}{section}
\numberwithin{cor}{section}
\renewcommand{\P}{\mathbb{P}}
\newcommand{\R}{\mathbb{R}}
\newcommand{\Q}{\mathbb{Q}}
\newcommand{\C}{\mathbb{C} }
\newcommand{\Z}{\mathbb{Z}}
\newcommand{\N}{\mathbb{N}}
\newcommand{\chern}{\operatorname{c} }
\renewcommand{\ss}{\operatorname{s} }
\newcommand{\EE}{\mathcal{E} }
\newcommand{\VV}{\mathcal{V} }
\newcommand{\FF}{\mathcal{F} }
\newcommand{\LL}{\mathcal{L} }
\renewcommand{\AA}{\mathcal{A}}
\newcommand{\OO}{\mathcal{O}}
\newcommand{\conv}{\operatorname{conv} }
\newcommand{\rank}{\operatorname{rank} }
\newcommand{\newt}{{\mathrm{Newt}}}
\newcommand{\init}{{\mathrm{init}}}
\newcommand{\vol}{{\mathrm{Vol}}}
\newcommand{\mvol}{{\mathrm{MV}}}
\newcommand{\cay}{{\mathrm{Cay}}}
\newcommand{\nc}{{\Sigma}}
\newtheorem{thmx}{Theorem}
\newcommand\restr[2]{{
  \left.\kern-\nulldelimiterspace 
  #1 
  \vphantom{\big|} 
  \right|_{#2} 
  }}
\title{The algebraic degree of sparse polynomial optimization}
\author{Julia Lindberg}
\address{Julia Lindberg \\ 
University of Texas-Austin}
\email {julia.lindberg@math.utexas.edu}\urladdr{https://sites.google.com/view/julialindberg/home}
\author{Leonid Monin}
\address{Leonid Monin \\
\'Ecole Polytechnique F\'ed\'erale de Lausanne}
\email {leonid.monin@epfl.ch}
\urladdr{http://www.math.toronto.edu/lmonin/}
\author{Kemal Rose}
\address{Kemal Rose \\
KTH Royal Institute of Technology}
\email {kemalr@kth.se}\urladdr{https://kemalrose.github.io/}
\date{}
\pgfplotsset{compat=1.18}
\begin{document}

\begin{abstract}
We study a broad class of polynomial optimization problems
whose constraints and objective functions exhibit sparsity patterns.
We give two characterizations of the number of critical points to these problems, one as a mixed volume and one as an intersection product on a toric variety.
As a corollary, we obtain a convex geometric interpretation of polar degrees, a classical invariant of algebraic varieties, as well as Euclidean distance degrees.
Furthermore, we
prove the BKK generality of Lagrange systems in many instances. 
\end{abstract}
\maketitle
{\tiny\textbf{Keywords: } sparse polynomial optimization, toric varieties, algebraic degree, ED degree, polar degree}

{\tiny\textbf{MSC2020:  } 90C26, 14M25, 14Q20, 52B20}

\section{Introduction}

We consider polynomial optimization problems of the form:
\begin{align*}
    \min_{y \in \mathbb{R}^n} \ f_0(y) \quad \text{subject to} \quad f_1(y) = 0,\ldots,f_m(y) = 0 \label{POP} \tag{POP}
\end{align*}
where $f_i \in \mathbb{R}[y_1,\ldots,y_n]$. Polynomial programs have broad modeling power and are found in applications such as signal processing, combinatorial optimization, power systems engineering, and more \cite{tan2001the,poljak1995a,molzahn2019a}. In general, these problems are NP-hard to solve \cite{vavasis1990quadratic} but there has been much work in
recent decades studying various solution techniques for \eqref{POP}. Inspired by recent improvements in numerical methods for polynomial system solving, in this work we focus on solving the critical point equations for \eqref{POP}.

When the first-order optimality conditions hold, there are finitely many complex critical points to \eqref{POP}. For a specified objective function $f_0$ and fixed constraints $f_1,\ldots,f_m$ we denote $\mathbf{F} = (f_0, \ldots,f_m )$. The number of complex critical points of $\mathbf{F}$ is called the \emph{algebraic degree} of $\mathbf{F}$. While \eqref{POP} is a real optimization problem, we consider complex critical points since for polynomials $\mathbf{F}$ with fixed monomial support, the algebraic degree is generically\footnote{By generic, we mean generic with respect to the Zariski topology. See \Cref{rem:generic} for a detailed explanation.} constant. When the algebraic degree of \eqref{POP} is finite, 
the coordinates of an optimal solution of \eqref{POP} can be expressed as the roots of 
polynomial functions of the coefficients of $\mathbf{F}$. 
Under sufficient genericity conditions these polynomial functions are all of the same degree and the algebraic degree of $\mathbf{F}$ has an additional interpretation as the degree of these polynomials.

The algebraic degree of $\mathbf{F}$ also has practical implications for optimization algorithms as it gives an upper bound on the number of critical points of \eqref{POP}. This gives a bound on
the number of local extrema at which local optimization algorithms can terminate.

When $\mathbf{F}$ consists of generic polynomials with full monomial support, a formula for the algebraic degree of $\mathbf{F}$
was given in \cite{MR2507133}. This was then specialized for many classes of convex polynomial optimization problems in \cite{MR2496496} and \cite{MR2546336}. When the objective function of \eqref{POP} is the Euclidean distance function, i.e. $f_0 = \lVert y - u \rVert_2^2$ for a generic point $u \in \mathbb{R}^n$, 
the algebraic degree of $\mathbf{F}$
is called the \emph{ED degree} of $ (f_1,\ldots,f_m )$. The study of ED degrees began with \cite{EDdegree2016}. Follow up work then studied the ED degree of real algebraic groups \cite{baaijensRealAlgGroups}, Fermat hypersurfaces \cite{leeFermat}, orthogonally invariant matrices \cite{drusvyatskiyOrthogonally}, smooth complex projective varieties \cite{aluffiEDComplex}, the multiview variety \cite{maximMultiview}, hypersurfaces \cite{breiding2020euclidean} and when the data $u$ and polynomials $f_1,\ldots,f_m$ are not generic \cite{maximDefect}.

A related problem coming from statistics considers the likelihood objective function, $f_0 = y_1^{u_1}\cdots y_n^{u_n}$. 
This problem is called \emph{maximum likelihood estimation} and 
the number of complex critical points to \eqref{POP} when $f_0= y_1^{u_1}\cdots y_n^{u_n}$ for generic $u \in \mathbb{R}^n$ is called the \emph{ML degree} of $ (f_1,\ldots,f_m )$.  Relationships between ML degrees and Euler characteristics as well as the ML degree of various statistical models have been studied in \cite{MR2230921, HostenSolving,MR3103064,MR3907355, DM2021,MR4219257,manivel2023complete}.

Inspired by recent results on the ED and ML degrees of sparse polynomial systems \cite{breiding2020euclidean,lindberg2021the}, we study the algebraic degree of \eqref{POP} when each polynomial $f_i \in \mathbb{R}[y_1,\ldots,y_n]$ is assumed to be a \emph{sparse polynomial} with generic coefficients. Given an optimization problem of the form \eqref{POP} where $\mathbf{F}$ is a list of sparse polynomial equations with generic coefficients, the \emph{Lagrangian} of $\mathbf{F}$ is
$$\Phi_F(\lambda, y) := f_0 - \sum_{i=1}^m \lambda_i f_i.$$
We consider the \emph{Lagrange system} of $\mathbf{F}$, namely $\mathbf{L}_{\mathrm{F}} = (f_1,\ldots,f_m,\ell_1,\ldots,\ell_n)$, where
\[ \ell_j = \frac{\partial}{\partial y_j} \left( f_0 - \sum_{i=1}^m \lambda_i f_i \right).\]

Analogous to the \emph{algebraic degree of polynomial optimization} from \cite{ MR2507133},
we generalize the common term to the
\emph{algebraic degree of sparse polynomial optimization}. It is the
number of complex critical points of $f_0$ restricted to
$\mathcal{V}(f_1,\ldots,f_m)$, where each $f_i \in \R[y_1,\ldots,y_n]$ is a \emph{sparse} polynomial. When all critical points are smooth, it is:
\begin{equation}
    \label{eq:degree of sparse pol optimization}
    \# \VV \left( \mathbf{L}_{\mathrm{F}}  \right) =
    \# \left \{ (y, \lambda) \in \C^n \times \C^m: \quad  0 =
    f_1 = \cdots = f_m = 
    \ell_1 = \cdots = \ell_n \right \}.
    \end{equation}

There exist classical results in algebraic geometry bounding 
the number of isolated solutions to a square polynomial system. A result of B\'ezout says that $\#\mathcal{V}(\mathbf{L}_{\mathrm{F}})$ is bounded above by the product of the degrees of the polynomials in $\mathbf{L}_{\mathrm{F}}$. 
If $\deg(f_i)~=~d_i$ and $\deg(\ell_j) = h_j$, $0 \leq i \leq m, 1 \leq j \leq n$, B\'ezout's bound reduces to $d_1 \cdots d_m\cdot h_1 \cdots h_n$ where $h_j \leq \max_{i \in [m]} \{ d_0 - 1, d_i\}$. Work of Nie and Ranestad refined this bound and showed that 
\[ \# \mathcal{V}(\mathbf{L}_{\mathrm{F}}) \leq d_1 \cdots d_m \cdot D_{n-m}(d_0 - 1,\ldots, d_m - 1) \]
where $D_{r}(n_1,\ldots, n_k) = \sum_{i_1 + \cdots + i_k = r} n_1^{i_1} \cdots n_k^{i_k}$
\cite{MR2507133}.
While this bound is generically tight when $\mathbf{F}$ consists of polynomials with full monomial support, the following example shows that it can be 
a strict upper bound
when $\mathbf{F}$ is sparse.

\begin{example}\label{ex1}
Consider the following optimization problem:
\begin{align}
    \min_{y \in \R^n} \ c^T y \quad \text{subject to} \quad f = \alpha_1 y_1^3 + \sum_{j=2}^{n-1} \alpha_j y_j^2 + \alpha_n y_n = 1, \label{opt ex}
\end{align}
where $c, \alpha \in \R^n$ are generic parameters.
The corresponding Lagrange system is given by $\mathbf{L}_{\mathrm{F}} = (\ell_1,\ldots,\ell_n, f ) $ where
\[ \ell_1 = c_1 - 3\lambda \alpha_1 y_1^2, \quad \ell_n = c_n - \alpha_n \lambda, \quad \ell_j = c_i - 2 \lambda \alpha_j y_j, \ 2 \leq j \leq n-1.  \]
B\'ezout's bound gives that $\#\mathcal{V}(\mathbf{L}_{\mathrm{F}}) \leq 9 \cdot 2^{n-2}$, which is worse than the Nie-Ranestad bound $\#\mathcal{V}(\mathbf{L}_{\mathrm{F}}) \leq 3 \cdot D_{n-1}(0,2) = 3 \cdot 2^{n-1}$. In this case, the sparsity of the Lagrange equations allows one to solve the Lagrange system explicitly.
One can see that for generic values of $c$ and $\alpha$, $\# \mathcal{V}(\mathbf{L}_{\mathrm{F}}) = 2$. 
\end{example}

Motivated by the previous example, we are interested in obtaining tighter bounds for the algebraic degree of sparse polynomial optimization problems.
We focus on the following questions. 
\begin{quest}
\label{quest: nr critical pts}
How many smooth critical points does \eqref{POP}
have for sparse $\mathbf{F}$?
\end{quest}

The motivation for \Cref{quest: nr critical pts} is that if we know how many critical points \eqref{POP} has, and we find them all, then we can globally solve \eqref{POP}. 
Currently, the only way to \emph{provably} find all smooth critical points is to find all complex zeros of $\mathbf{L}_{\mathrm{F}}$.

The field of computational algebraic geometry has traditionally been associated with symbolic computations based on Gröbner bases. Recent developments in numerical frameworks, such as homotopy continuation \cite{bertini}, provide algorithms that are able to solve problems intractable with symbolic methods. Moreover, numerical algorithms can not only provide the floating point approximation of a solution, but also certify that a given approximation represents a unique solution, and provide guarantees for correctness \cite{breiding2021certifying,Rump1999,LeeM2}.
Therefore, numerical computation can be used to prove lower bounds on $\# \VV (\mathbf{L}_{\mathrm{F}})$.
However, ensuring the absence of additional solutions requires establishing an upper bound, which can be achieved through intersection theory.

Such an intersection theoretic bound for sparse polynomial systems was given by the celebrated Bernstein-Kouchnirenko-Khovanskii (BKK) theorem. The BKK theorem \cite{bernshtein1979the} relates the number of $\C^* := \C \backslash \{0\}$ zeros of a system of polynomial equations to the mixed volume of their corresponding Newton polytopes (see \Cref{sec: bkk}). While the BKK bound is generically tight, we note that
the coefficients of the system $\mathbf{L}_{\mathrm{F}}$ are linearly dependent, so
a priori it is not clear that the system $\mathbf{L}_{\mathrm{F}}$
has the expected number of solutions.
This motivates the second question guiding this work.
\begin{quest}
\label{quest: is Lagragian BKK general?}
Is the Lagrange system of \eqref{POP} BKK general?
\end{quest}

An affirmative answer to \Cref{quest: is Lagragian BKK general?} would show that \emph{polyhedral homotopy} algorithms are optimal for finding all complex critical points for generic instances of \eqref{POP} in the sense that for every solution to $\mathbf{L}_{\mathrm{F}} = 0$, exactly one \emph{homotopy path} is tracked. For more details on polyhedral homotopy continuation see \cite{huber1995a}.

A second consequence of \Cref{quest: is Lagragian BKK general?} has to do with the complexity of algorithms in real algebraic geometry. A fundamental question in real algebraic geometry is to find a point on each connected component of a real algebraic variety, $Y$. In \cite{hauenstein2013numerically}, Hauenstein showed that when $f_0$ is the Euclidean distance function, the zeros of the Lagrange system of $f_0$ restricted to $Y$ cover all connected components. 
He then used this result to suggest a homotopy algorithm that would find a point on each connected component of $Y$.
In Hauenstein's homotopy algorithm, he partitions the homotopy paths into two sets, $E$ and $E_1$, where $E$ denotes the set of homotopy paths that converge and $E_1$ denotes the set of homotopy paths that diverge. 
By showing that polyhedral homotopy algorithms are optimal for a large class of Euclidean distance optimization problems, we would
further the analysis of Hauenstein's algorithm by showing that the set $E_1$ is empty.

Finally, we note that understanding BKK exactness of non generic polynomial systems is of increasing interest in the applied algebraic geometry community. In this field, researchers study systems of polynomial equations coming from a diverse set of fields including biology, optimization, data science and power systems engineering. Recent results also study BKK exactness in these settings \cite{coons2023mixed,breiding2020euclidean,lindberg2021the,lindberg2023estimating,chen2022typical}.

\subsection*{Contribution}
Our contribution consists of multiple results 
determining the algebraic degree of generic sparse polynomial programs.
First, we show in \Cref{thm: B}, that the answer to \Cref{quest: is Lagragian BKK general?} is positive for a wide class of sparse polynomial programs having strongly admissible monomial support (see \Cref{def: strongly admissible point configuration}). In particular, our results show that the BKK bound is tight for \Cref{ex1}.  Further, in \Cref{cor: delete mons} we show that algebraic degrees of generic sparse polynomial programs are determined by the Newton polytopes of $\mathbf{F}$. \Cref{cor: delete mons} also has algorithmic implications which were studied in \cite{rose2023polyhedral}.

As a corollary, we prove an analogous result to that in \cite{breiding2020euclidean}, and show that the ED degree of a variety defined by polynomials with strongly admissible support is equal to the BKK bound of its corresponding Lagrange system (\Cref{cor:EDdegree}). We also prove similar results for (the sum of) polar degrees (\Cref{cor:mvol and polar}), giving the first convex geometric interpretation of this algebraic invariant.

In \Cref{thm: C} we loosen the assumption on $\mathbf{F}$ of strongly admissible monomial support and provide a different formula for the algebraic degree of $\mathbf{F}$ 
when $\mathbf{F}$ is in a larger family of sparse polynomial programs.
Our main tool here is Porteous' formula which computes the fundamental class of the degeneracy locus of a morphism between two vector bundles as a polynomial of their Chern classes. Using Porteous' formula, \Cref{thm: C} expresses the algebraic degree in terms of the intersection theory of a certain toric compactification of $(\C^*)^n$. The formula for the algebraic degree in \Cref{thm: C} can be expressed as a (not necessarily positive) linear combination of mixed volumes. However, the explicit connection to the mixed volume of the Lagrange system is still mysterious.

\section{Preliminaries and notation}\label{sec: prelims}
\subsection{Sparse polynomials and polyhedral geometry}\label{sec: bkk}
We specify a \emph{sparse polynomial} $f \in \mathbb{C}[y_1,\ldots,y_n]$ by its monomial support, $\AA \subset \mathbb{N}^n := \Z_{\geq 0}^n$. Specifically, for a finite subset $\AA \subset \mathbb{N}^n$, we write 
\[ f = \sum_{\alpha \in \AA} c_{\alpha} y^\alpha \]
where $y^{\alpha} := y_1^{\alpha_1} \cdots y_n^{\alpha_n}$ and $c_\alpha \in \mathbb{C}$. 
Given a sparse polynomial $f \in \mathbb{C}[y_1,\ldots, y_n]$ with monomial support $\AA$, the \emph{Newton polytope of $f$} is defined as the convex hull of $\AA$. Explicitly, the Newton polytope of $f$ is 
\[\newt(f) = \conv \{ \alpha \ : \ \alpha \in \AA \}. \]
A \emph{sparse polynomial system} $\mathbf{F} =(f_0,\ldots,f_m)$ is defined by a tuple $\AA = (\AA_0,\ldots, \AA_m)$ where $f_i = \sum_{\alpha \in \AA_i} c_{\alpha,i} y^{\alpha} \in \mathbb{C}[y_1,\ldots,y_n]$.

\begin{remark}\label{rem:generic}
In this paper, we consider \emph{generic} sparse polynomial systems.
A statement holds for a generic sparse polynomial system if
it holds for all systems $\mathbf{F} = (f_0,\ldots, f_m)$
whose coefficients $\{c_{\alpha,i} \ : \alpha \in \mathcal{A}_i, \  0 \leq i \leq m \}$
lie in some nonempty Zariski open subset of the space $\C^{\AA_0} \times \cdots \times \C^{\AA_m}$ of coefficients. This means that the non-generic behavior occurs on a set of measure zero in the space $\C^{\AA_0} \times \cdots \times\C^{\AA_m}$.
\end{remark}

Given polytopes $P_1,\ldots,P_n \subset \mathbb{R}^n$, the \emph{mixed volume} of $P_1,\ldots,P_n$ is the coefficient of the monomial $\lambda_1\cdots\lambda_n$ in the volume polynomial 
\[ \vol_n(\lambda_1 P_1 + \ldots + \lambda_n P_n), \]
where $P + Q = \{p + q \ : \ p \in P, \ q \in Q\}$ is the Minkowski sum and $\vol_n$ is the standard $n$-dimensional Euclidean volume.

In a series of celebrated results \cite{bernshtein1979the,kouchnirenko1976polyedres,khovanskii1978newton} the connection between the number of solutions over $\mathbb{C}^*$ to a system of sparse polynomial equations and the underlying convex geometry of the polynomials was made. 

\begin{theorem}[BKK Bound \cite{bernshtein1979the,kouchnirenko1976polyedres,khovanskii1978newton}] \label{thm: BKK}
Let $\mathbf{F} =(f_1,\ldots,f_n) \subset \mathbb{C}[y_1,\ldots,y_n]$ be a sparse polynomial system with $\eta$ isolated
solutions in $(\mathbb{C}^*)^n$, counted with multiplicity and let $P_i = \newt(f_i)$. Then 
\[ \eta \leq \mvol(P_1,\ldots,P_n).\] Moreover, if the coefficients of $\mathbf{F}$ are generic then $\eta = \mvol(P_1,\ldots,P_n)$.
\end{theorem}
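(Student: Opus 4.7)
The plan is to establish the BKK bound via intersection theory on a toric compactification, following the classical strategy that sits naturally inside the paper's framework. First, I would choose a smooth projective toric variety $X_\nc$ whose fan $\nc$ simultaneously refines the inner normal fans of $P_1, \ldots, P_n$. Via the toric dictionary, each polytope $P_i$ determines a globally generated line bundle $\LL_i$ on $X_\nc$, and the polynomial $f_i$ (viewed as a Laurent polynomial on the dense torus $(\C^*)^n \subset X_\nc$) extends to a global section $s_i$ of $\LL_i$; its vanishing locus $D_i = \VV(s_i)$ is an effective Cartier divisor. The core combinatorial input is the identification
\[
(D_1 \cdots D_n)_{X_\nc} \;=\; \mvol(P_1, \ldots, P_n),
\]
which is the toric analogue of Bezout and follows from expanding $\vol_n(\lambda_1 P_1 + \cdots + \lambda_n P_n)$ as a self-intersection of the divisor class $\lambda_1 [D_1] + \cdots + \lambda_n [D_n]$.

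Second, I would show that for generic choices of coefficients the scheme-theoretic intersection $D_1 \cap \cdots \cap D_n$ is zero-dimensional and contained in the open torus $(\C^*)^n$. Containment in the torus is the heart of Bernstein's theorem: for each proper torus orbit $\OO_\sigma$ associated to a cone $\sigma \in \nc$, the restriction of $f_i$ to $\OO_\sigma$ is (up to a monomial) the \emph{initial form} $\init_\sigma(f_i)$, whose Newton polytope is the face of $P_i$ selected by $\sigma$. A generic coefficient choice ensures that at least one such initial system has no common $\C^*$-zero on $\OO_\sigma$ (otherwise the corresponding faces would have to be balanced in the mixed-volume sense, which fails generically). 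Transversality of the intersection on the torus itself is then a routine Bertini argument using that the linear systems $|\LL_i|$ cut out transverse hypersurfaces on $(\C^*)^n$ for general sections.

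Third, combining the two ingredients, for generic coefficients the number of isolated $\C^*$ solutions equals $(D_1 \cdots D_n)_{X_\nc} = \mvol(P_1,\ldots,P_n)$, giving the equality statement. For the inequality with arbitrary coefficients, I would deform the given system inside the space of sections with the same Newton polytopes and invoke upper semicontinuity of the length of the zero-dimensional part of the intersection: isolated solutions of the original system, counted with multiplicity, cannot exceed the generic count.

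The main obstacle is the boundary-vanishing step, i.e.\ controlling the initial systems $\init_\sigma(\mathbf{F})$ on every torus orbit of $X_\nc$ simultaneously. The intersection-theoretic identity with $\mvol$ is by now a clean consequence of the correspondence between lattice polytopes and nef divisor classes on toric varieties, and the Bertini transversality on the open torus is standard; but ruling out solutions on the toric boundary requires the careful face-by-face analysis which constitutes Bernstein's so-called second theorem and is exactly the input that will later, in the body of the paper, need to be adapted to the non-generic Lagrange system $\mathbf{L}_{\mathrm{F}}$.
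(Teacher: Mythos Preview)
The paper does not prove this theorem; it is stated in the preliminaries section with citations to the original works of Bernstein, Kouchnirenko, and Khovanskii, and used as a black box throughout. There is therefore no ``paper's own proof'' to compare against.

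That said, your sketch is essentially the standard toric-geometric proof of the BKK bound and is sound in outline. One phrasing to tighten: in the boundary-vanishing step you write that a generic coefficient choice ensures ``at least one such initial system has no common $\C^*$-zero on $\OO_\sigma$.'' What you actually need (and what Bernstein's second theorem, stated in the paper as \Cref{thm: bkk 2}, asserts) is that for \emph{every} nonzero cone $\sigma$ the full initial system $\init_\sigma(\mathbf{F})$ has no common torus zero; this is a dimension count, since on an orbit of dimension $d<n$ the restricted system is overdetermined and generically inconsistent. Also, your semicontinuity argument for the inequality is correct but deserves a word of care: one should work with the length of the part of the intersection supported on the torus (or pass to the closure in $X_\Sigma$ and subtract boundary contributions), since solutions can escape to the boundary in a special fiber. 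With those two clarifications the argument goes through, and it is indeed the same intersection-theoretic mechanism the paper later adapts to the Lagrange system in the proofs of \Cref{thm: A} and \Cref{thm: C}.
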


If for a sparse polynomial system $\mathbf{F}$ the BKK bound holds with equality, we say $\mathbf{F}$ is \emph{BKK general}. Bernstein gave explicit degeneracy conditions under which the above inequality is tight by considering the initial systems of $\mathbf{F}$ \cite{bernshtein1979the}.

Given a polytope $P \subset \mathbb{R}^n$ and a vector $w \in \mathbb{Z}^n \backslash \{0\}$, let $P_w$ denote the \emph{face exposed} by $w$. Specifically, 
\[ P_w = \{ v \in P \ : \ \langle v, w \rangle \leq \langle z, w \rangle \ \forall z \in P \}. \]
For a sparse polynomial $f$ we call 
\[ \init_w(f) = \sum_{ \alpha \in (\newt(f))_w} c_{\alpha} y^\alpha \]
the \emph{initial polynomial} of $f$ with respect to $w$. For a sparse polynomial system $\mathbf{F}$, we denote $\init_w(\mathbf{F}) = (\init_w(f_1),\ldots,\init_w(f_n))$.

\begin{theorem}\cite[Theorem 2]{bernshtein1979the}\label{thm: bkk 2}
Let $\mathbf{F} =(f_1,\ldots,f_n) \subset \mathbb{C}[y_1,\ldots,y_n]$ be a sparse polynomial system with $\eta$ isolated $\mathbb{C}^*$ solutions counted with multiplicity and let $P_i = \newt(f_i)$. All $\mathbb{C}^*$ solutions of $F(y) = 0$ are isolated and $\eta = \mvol(P_1,\ldots,P_n)$ if and only if for every $w \in \mathbb{Z}^n \backslash \{0\}$, $\init_w(\mathbf{F})$ has no $\mathbb{C}^*$ solutions.
\end{theorem}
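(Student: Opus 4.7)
The plan is to prove both directions using the standard toric compactification and Puiseux expansion framework originating with Bernstein. Choose a smooth projective toric variety $X_\nc$ whose fan $\nc$ refines the normal fans of all $P_i$. Each $f_i$ extends to a section $s_i$ of a globally generated line bundle $\LL_i$ on $X_\nc$, and by toric intersection theory the total intersection number satisfies $\int_{X_\nc} c_1(\LL_1)\cdots c_1(\LL_n) = \mvol(P_1,\ldots,P_n)$. The $\C^*$-zeros of $\mathbf{F}$ correspond to the portion of $Z := \bigcap_{i=1}^n V(s_i)$ lying in the open torus orbit, while any deficiency from $\mvol$ comes from the torus-invariant boundary strata.

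For the backward direction, I would construct a generic one-parameter coefficient path $\mathbf{F}^{(t)}$ in $\C^{\AA_1}\times\cdots\times\C^{\AA_n}$ joining a BKK-general system at $t = 0$ (which has exactly $\mvol(P_1,\ldots,P_n)$ torus zeros by \Cref{thm: BKK}) to $\mathbf{F}$ at $t = 1$. Track each of the $\mvol(P_1,\ldots,P_n)$ solution curves: by Newton--Puiseux each curve has a well-defined limit in $X_\nc$, lying either in the open torus or in the orbit of a torus-invariant stratum corresponding to some cone $\sigma \in \nc$. If a curve limits to the orbit of a ray $\R_{\geq 0}w$, writing the branch as $x(t) = t^w(x_0 + O(t^{1/k}))$ forces the leading coefficient $x_0 \in (\C^*)^n$ to be a common $\C^*$-zero of $\init_w(\mathbf{F})$, and analogously for higher-dimensional cones via iterated initial forms. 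The hypothesis that no initial system has a $\C^*$-zero then excludes every such boundary limit, so all $\mvol(P_1,\ldots,P_n)$ branches persist as isolated $\C^*$-zeros of $\mathbf{F}$, giving $\eta = \mvol(P_1,\ldots,P_n)$.

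For the forward direction I prove the contrapositive. Suppose some $\init_w(\mathbf{F})$ admits a $\C^*$-zero $x_0$. Substituting $x = t^w y$ with $m_i = \min_{\alpha \in \AA_i}\langle w,\alpha\rangle$, the rescaled system $\widetilde f_i(y,t) := t^{-m_i} f_i(t^w y)$ is polynomial in $y$, regular in $t$ near $t = 0$, and satisfies $\widetilde f_i(y, 0) = \init_w(f_i)(y)$. Running the deformation of the previous paragraph in reverse, the existence of the root $x_0$ guarantees that at least one of the $\mvol$ branches of the BKK-general system reaches the boundary of $X_\nc$ over the deformation: either that branch limits to a boundary point of $X_\nc$, strictly reducing the torus count below $\mvol$, or it produces a positive-dimensional $\C^*$-zero component of $\mathbf{F}$, violating isolatedness. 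In either case one of the two conclusions in the theorem statement fails.

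The main obstacle is the rigorous control in the backward direction: one must argue that Puiseux limits in $X_\nc$ account for all the missing intersection multiplicity, which requires excess-intersection considerations when the sections $s_i$ fail to meet the boundary transversally. A secondary subtlety is that the quantifier $w \in \Z^n\setminus\{0\}$ is nominally infinite, but since $\init_w(f)$ depends only on the face of $\newt(f)$ exposed by $w$, it suffices to check one representative per cone of $\nc$, and higher-dimensional cones reduce to ray checks using the iterated-initial-form identity $\init_w(\init_{w'}(f)) = \init_{w' + \epsilon w}(f)$ for small $\epsilon > 0$. Finally, lifting a potentially singular root of $\init_w(\mathbf{F})$ in the forward direction is precisely the point at which the full strength of Newton--Puiseux is needed rather than the smooth implicit function theorem.
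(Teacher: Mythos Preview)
The paper does not prove this statement: \Cref{thm: bkk 2} is quoted directly from Bernstein's original article \cite{bernshtein1979the} as a background result, with no proof given. There is therefore nothing in the paper to compare your proposal against.

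That said, a few remarks on the proposal itself. Your backward direction is the standard toric--compactification/homotopy argument and is essentially correct; the cleanest way to finish it is to note that the hypothesis forces $\bigcap_i V(s_i)$ to miss every boundary orbit of $X_\nc$, so the intersection is proper, zero--dimensional, and entirely in the torus, whence its length equals $\int_{X_\nc} c_1(\LL_1)\cdots c_1(\LL_n)=\mvol(P_1,\dots,P_n)$ directly, without tracking branches.

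Your forward direction has a genuine gap. The sentence ``running the deformation of the previous paragraph in reverse, the existence of the root $x_0$ guarantees that at least one of the $\mvol$ branches \dots\ reaches the boundary'' conflates two unrelated deformations: the coefficient homotopy $G\rightsquigarrow\mathbf{F}$ and the rescaling $x=t^{w}y$. Neither one, by itself, yields the claim. A root $x_0$ of $\init_w(\mathbf{F})$ does give a boundary point of $\bigcap_i V(s_i)$ in $X_\nc$, but you then need to show that this boundary locus carries strictly positive intersection multiplicity even when it is positive--dimensional (the excess case you flag). You correctly identify this as the crux, but you place it in the backward direction, whereas it is exactly what is missing from your forward argument. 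Bernstein handles it by a careful Puiseux/valuation argument showing that each boundary stratum where the initial system vanishes absorbs a strictly positive number of branches under a generic coefficient deformation; alternatively one can invoke Fulton's refined intersection theory to see that the contribution of the boundary component to the total degree $\mvol$ is strictly positive. Without one of these, the contrapositive is incomplete.
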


\Cref{thm: BKK} and \Cref{thm: bkk 2} demonstrate the intimate connection between solutions to systems of polynomial equations and polyhedral geometry. In the remainder of this section, we define a few more objects that are helpful when using this connection. Given $\AA = (\AA_0,\ldots,\AA_m) \in \N^n$ we define the \emph{Cayley polytope} of $\AA$ as

\[ 
\cay (\AA) = \conv \left(\{ (z, e_{i}) \ : z \in \AA_i, i = 0, \dots, m \} \right)\subset \R^{n+m} 
\]
where $e_i$ is the $i$-th standard basis vector of $\R^m$ and $e_0$ is the vector of all zeroes. Similarly, for a sparse polynomial system $\mathbf{F}$ with support $\AA = (\AA_0,\ldots,\AA_m)$, we define $\cay(\mathbf{F}) = \cay(\AA)$.

For a face of a convex polytope, $F \subset P \subset \R^n$, the \emph{normal cone} of $F$ is the set of linear functionals which achieve their minimum on $F$:
\[ \sigma (F) = \{c \in \R^n \ : \ \langle c,x \rangle \leq \langle c,z \rangle, \ \forall x \in F, \ z \in P \}. \]
The normal cones of each face of $P$ form a fan, denoted $\nc(P) \subset \R^n$.

Finally, in this paper we consider the operation of taking the ``partial derivative" of a polytope which we define as follows. 
Let $P\subset \R_{\geq 0}^n$ be a polytope contained in the positive orthant. Then
\[
\partial_j P= (P-e_j)\cap \R_{\geq 0}^n = \{ \alpha - e_j \ : \ \alpha \in P , \  \alpha_j \geq 1\},
\]
where $e_j$ is the $j-$th standard basis vector of $\R^n$.

The definition of $\partial_j P$ is motivated by the partial differentiation operation of a polynomial $f \in \R[y_1,\ldots,y_n]$ with $\newt(f)=P$. Indeed, one always has $\newt(\frac{\partial}{\partial y_j} f)\subset\partial_j \newt(f)$. However, $\newt(\frac{\partial}{\partial y_j} f)$ and $\partial_j \newt(f)$ can be quite different. For instance, let $f(y)=1+y^3$, then $\newt(f)=[0,3]$ and $\partial\newt(f)=[0,2]$. However, $\frac{\partial}{\partial y} f= 3y^2$ and $\newt(\frac{\partial}{\partial y} f)= \{2\}$. In general, even if $P$ is integral, the polytope $\partial_i P$ does not have to be integral.

For a polynomial $f=\sum_{\alpha \in \AA} c_\alpha y^\alpha$ having full monomial support with respect to $\newt(f)$ (i.e. $c_\alpha \ne 0$ for any $\alpha \in \newt(f)\cap\N^n$) the two constructions are connected in the following way: 
\[
\newt\left(\frac{\partial}{\partial y_j} f\right) = \conv \{\partial_j \newt(f)\cap \N^n\}.
\]
In particular, if $f$ is a degree $d$ polynomial with all monomials of degree $\leq d$ having non-zero coefficients, then 
\[
\newt\left(\frac{\partial}{\partial y_j} f\right) = \partial_j \newt(f).
\]

\subsection{Toric varieties}\label{sec: toric}
\Cref{thm: BKK} is a statement about intersection theory on toric varieties. A \emph{toric variety} $X$ is an irreducible variety such that $(\C^*)^n$ is a Zariski open subset of $X$ and the action of $(\C^*)^n$ on itself extends to an action of $(\C^*)^n$ on $X$. We can also associate normal toric varieties with polyhedral fans.

Let $\sigma \in \R^n$ be a rational polyhedral cone which does not contain any vector subspace and denote 
\[ S_\sigma = \sigma^{\vee} \cap \Z^n\]
where $\sigma^{\vee} = \{ y \in \R^n \ : \ \langle y,x \rangle \geq 0 \ \forall x \in \sigma \}$ is the dual cone of $\sigma$. Then the \emph{affine toric variety} associated with $\sigma$ is 
\[ V_{\sigma} = \text{Spec}(\C[S_{\sigma}]) \]
where $\C[S_{\sigma}]$ is the semigroup algebra associated with $S_\sigma$.

Given a polyhedral fan $\Sigma$ we have a collection of affine toric varieties indexed by cones in $\Sigma$, denoted $\{V_\sigma \ : \ \sigma \in \Sigma \}$.
Specifically, for $\sigma, \tau \in \Sigma$ with $\sigma \subseteq \tau$ we have the inclusion $V_\sigma \subseteq V_\tau$. We denote $X_\Sigma$ as the toric variety that is the colimit of all such inclusion maps.
 This means that $X_\Sigma$ is obtained by gluing all affine varieties $V_\tau$, $V_{\tau'}$ along $V_{\sigma}$, where $\sigma = \tau \cap \tau'$.
For a more complete treatment of toric varieties, see \cite{cox2011toric}.

In this paper, we will work with the \emph{Cox ring} of a toric variety which is a generalization of the homogeneous coordinate ring of projective space and was introduced in \cite{cox1995homogeneous}. First, let us denote by $\Sigma(1)$ the set of all rays of $\Sigma$. By abuse of notation we often do not distinguish between rays $\rho$ and their primitive ray generators.
The Cox ring of $X_{\Sigma}$ is
\[ 
S = \C[x_\rho \ : \ \rho \in \Sigma(1)]. 
\]
To every ray $\rho$ we associate the corresponding torus invariant Weyl divisor
$D_\rho$. Note that 
every torus invariant Weyl divisor $D$ on $X$ has a unique presentation of the form
$D = \sum_{\rho \in \Sigma(1)} a_\rho D_\rho$.
The global sections of the associated sheaf $\mathcal{O}_{X} (D)$ are spanned by monomials:
$$ H^0(\mathcal{O}_{X} (D),y)  = \left\langle y^m \mid  \ m \in \Z^n, \  \langle m, \rho \rangle \geq - a_\rho   \right \rangle \subseteq \C[y_1^\pm, \dots, y_n^\pm] . $$
Given a global section $f = \sum_{m \in \mathbb{Z}^n} c_m y^m$ of $\OO_{X}(D)$ we define the homogenization $\widetilde{f}$ of $f$ to be the following element of $S$:
\begin{equation}
    \label{eq: homogenization}
    \widetilde{f} =\prod_{\rho \in \Sigma(1)} x_\rho^{a_\rho} f(z_1, \dots, z_n) = 
    \sum_{m \in \Z^n} c_m \prod_{\rho \in \Sigma(1)} x_\rho^{ \langle m, \rho \rangle + a_\rho }.
\end{equation}
Here the variables $z_i$ are defined by $z_i = \prod x_\rho^{\rho_i}.$
Note that a Laurent polynomial $f \in \C[y_1^{\pm}, \dots, y_n^{\pm}]$ can be a section of 
the sheaf $\OO_{X}(D)$ for
a certain choices of $D$, and the homogenization $\widetilde{f}$ depends on the choice of $D$.

\begin{example}
Consider the toric variety $\P^1 \times \P^1$, associated with the dual fan $\Sigma(P)$ of the square $P$.
The four generators $x_{-e_1},x_{e_1},x_{-e_2},x_{e_2}$ 
of the Cox ring $S$ are in bijection to the four rays.
\begin{center}
\includegraphics[scale=0.4]{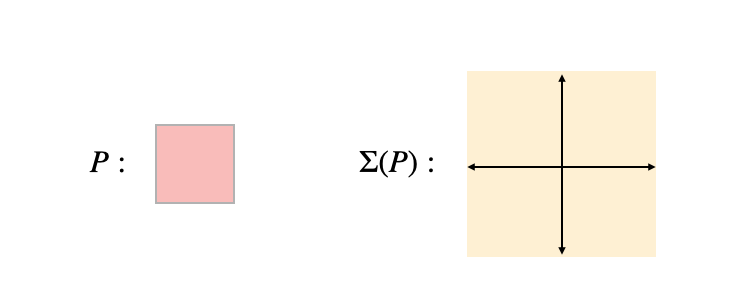}
\end{center}
Homogenizing the bivariate polynomial $f = 1 + y_1 + y_2 + y_1y_2$
yields the bihomogeneous polynomial $\widetilde{f} = x_{-e_1}x_{-e_2} + x_{e_1}x_{-e_2} + x_{-e_1}x_{e_2} + x_{e_1} x_{e_2}$.
\end{example}

\subsection{Chern and Segre classes of vector bundles}
The main ingredient of the intersection theoretic formulas for the algebraic degree of polynomial optimization problems given in \cite{ MR2507133} is Porteous' formula. Porteous' formula computes the expected cohomology class of the degeneracy locus of maps of vector bundles. Loosely speaking, vector bundles are families of vector spaces that are parameterized by another space, and cohomology classes are algebraic invariants of topological spaces. In this paper, all vector spaces will be parameterized by algebraic varieties and the vector spaces will all have the same dimension, called the \emph{rank} of the vector bundle.

To formulate Porteous' formula one needs to use the Chern and Segre classes, which are well-studied characteristic classes of vector bundles. Here we list some main properties of these classes. For a more detailed introduction we refer to \cite{eisenbud_harris_2016}.

For a vector bundle $\EE$ of rank $r$ on a variety $X$ of dimension $d$ and any $i = 0,\ldots,d$, one can define its $i$-th Chern class as an element of the $2i$-th cohomology class of $X$, $c_i(\EE)\in H^{2i}(X)$. For any vector bundle $\EE$, one has $c_0(\EE)=1$ and $c_i(\EE) = 0$ for any $i>r$. We will denote by $c(\EE)$ the \emph{total Chern class} of $\EE$, that is 
\[
c(\EE) = c_0(\EE) +\ldots + c_{{\rm max}(d,r)}(\EE).
\]
The crucial property of total Chern classes, known as Whitney's formula, is that they are multiplicative with respect to taking direct sums of line bundles:
\[
c(\EE\oplus F) = c(\EE)\cdot c(\FF).
\]
In what follows we will mostly work with vector bundles coming as a direct sum of line bundles $E= \LL_1\oplus \ldots \oplus \LL_n$. By applying the Whitney formula to such vector bundles we obtain a convenient formula for their total Chern class:
\[
c(\EE) = \prod_{i=1}^n (1+c_1(\LL_i)), \text{ These are graded pieces: } c_{k}(\EE)= \sum_{I\in \binom{[n]}{k}} \prod_{i\in I} c_1(\LL_i).
\]

Finally, let us recall the definition of Segre classes. Note that, the total Chern class $c(\EE)$ is an invertible element in the cohomology ring of $X$ as its $0$-th degree part is equal to $1$. Using this one defines a total Segre class $s(\EE)$ of a vector bundle $\EE$ on $X$ to be the inverse of the total Chern class of $\EE$:
\[
s(\EE)= c(\EE)^{-1}. 
\]
Individual Segre classes $s_i(\EE)$ are defined as homogeneous components of the total Segre class. Note that, unlike Chern classes, one could have non-trivial Segre class $s_i(\EE)$ even for $i>r$. 

\section{Statement of the main result}
In this section, we give an overview of the main results of this paper and defer the proofs of \Cref{thm: A} and \Cref{thm: C} to \Cref{sec: secion_with_proofs_for_thm_A_and_B}.
The results in this paper require certain assumptions on the monomial support of $\mathbf{F}$, which we define in this section.
While computational experiments suggest that both \Cref{thm: A} and \Cref{thm: B} are true under milder assumptions, we show in \Cref{example: demonstrate assumptions for main theorem}
that they are needed for \Cref{thm: C}.

\begin{definition}
\label{def: admissible point configuration}
    We call a point configuration $\AA = (\AA_0,\ldots,\AA_m) \in \N^n$ \emph{admissible} if for every $i = 0,\ldots, m$:
    \begin{enumerate}
        \item $\R_{\geq 0 }^n$ is a cone in the normal fan of the polytope $\conv(\AA_i)$, and
        \item  $\conv(\AA_i)$
    meets every coordinate hyperplane of $\R^n$.
    \end{enumerate}
\end{definition}

When passing to a toric compactification, for various technical reasons we need to ensure
that the constraints $f_1, \dots, f_m$ define a variety with a smooth closure.
This is guaranteed by the following notion of an \emph{appropriate} toric variety.

\begin{definition}
\label{def: appropriate}
    Let $X$ be a proper, normal toric variety with underlying polyhedral fan $\Sigma$ in $\R^n$
    and $\AA = (\AA_0,\ldots,\AA_m)$ an admissible point configuration.
    We call $X$ \emph{appropriate} for $\AA$ if the following three properties hold:
    \begin{enumerate}
        \item the normal fan $\Sigma(\conv(\AA_i))$ is refined by $\Sigma$ for each $i = 0, \dots, m$,
        \item the fan $\Sigma$ contains the non-negative orthant $\R^n_{\geq 0}$, and
        \item for generic functions $f_i$ with monomial support $\AA_i$ the closure of $\VV(f_1, \dots, f_m)$ in $X$ 
        is disjoint from the singular locus of $X$.
    \end{enumerate}
\end{definition}

\begin{remark}
    \label{existence of admissible X}
    Note that for every admissible point configuration $\AA$, there exists a smooth appropriate toric variety $X$.
    To construct $X$ consider the normal fan $\Sigma'$ of the Minkowski sum
    $ \conv(\AA_0) + \cdots + \conv(\AA_m)$.
    A resolution of singularities can be performed by subdividing each singular cone of $\Sigma'$, resulting in a smooth,
    complete polyhedral fan $\Sigma$ which contains the positive orthant.
    For more details on toric resolution of singularities consider Chapter 11 of \cite{cox2011toric}.
\end{remark}

In the proof of \Cref{thm: A} and \Cref{thm: C} we consider a natural choice for the toric compactification $X$, given by the coarsest refinement of all normal fans of
the Newton polytopes $\newt(f_0), \dots, \newt(f_m)$.
To state our main results, we need one final definition.

\begin{definition}
\label{def: strongly admissible point configuration}
    We call a point configuration $\AA = (\AA_0,\ldots,\AA_m) \in \N^n$ \emph{strongly admissible} if
    it is admissible and if the toric variety
    \[X = X(\Sigma(\conv(\AA_0) + \cdots + \conv(\AA_m)))\]
    is appropriate for $\AA$.
    Here $X$ is the toric variety
    associated with the common refinement of the normal fans $\Sigma(\conv(\AA_i))$.
\end{definition}
We give examples for the above definitions.

\begin{example}
\label{ex: examples for admissibility, appropriate, etc}
Consider the tetrahedron $\mathcal{S} = \conv(0, e_1, e_2, e_3) \subset \R^3$, 
the cube $\mathcal{C} = \conv(0, e_1, e_2, e_3, e_1+e_2, e_1+e_3, e_2+e_3, e_1+e_2+e_3) \subset \R^3$
and the bipyramid $\mathcal{B} = \conv(0, e_1, e_2, e_3, e_1+e_2+e_3) \subset \R^3$.
    \begin{itemize}
    \item The point configuration $(\mathcal{S}, - \mathcal{S} + (1,1,1) )$ is not an admissible point configuration since the normal fan of $- \mathcal{S}$ does not contain $\mathbb{R}^3_{\geq0}$ as a cone.
        \item The point configuration $(\mathcal{S}, \mathcal{C})$ is a strongly admissible point configuration since it is admissible and the singular locus of the toric variety defined by $\mathcal{S}+\mathcal{C}$ is zero-dimensional.
       \item The point configuration $(\mathcal{S}, \mathcal{B})$ is an admissible, but not strongly admissible, point configuration. In particular, the toric variety defined by $\mathcal S+\mathcal B$ is not appropriate for $(\mathcal{S},\mathcal{B})$. To see this consider the three-dimensional toric variety $X$ defined by $\mathcal{S}+\mathcal{B}$. The torus orbit 
 $X_\sigma$ defined by the cone $\sigma =  \R_+ (-1,-1,1)+\R_+ (-1,-1,-1) $, dual to the face $\conv((2,1,1), (1,2,1))$ of $\mathcal{S}+\mathcal{B}$, is contained in the singular locus.
       Let now $f_1$ be a generic polynomial with Newton polytope $\mathcal{S}$. Then the vanishing locus $\VV(f)$ in $X$ intersects $X_\sigma$.

    \end{itemize}
\end{example}
The polytopes $\mathcal{S}+\mathcal{B}$ and $\mathcal{S}+\mathcal{C}$ are displayed below, with the faces that define singular torus orbits colored in green.
    \begin{center}
\includegraphics[scale=0.2]{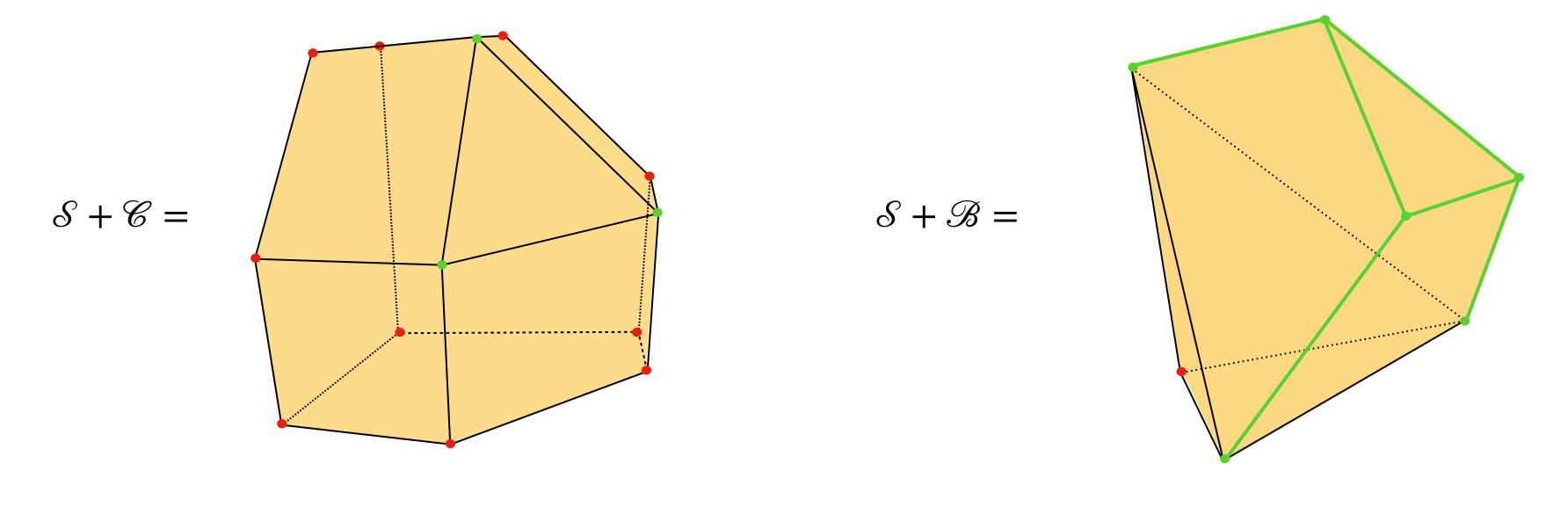}
\end{center}

The next two results express the number of solutions of $\mathbf{L}_{\mathrm{F}}$ as mixed volumes.

\begin{thmx}
\label{thm: A}
Let $\mathbf{F} =(f_0,\ldots,f_m)$ be a generic sparse system of polynomials 
in $\mathbb{C}[x_1,\ldots,x_n]$
with strongly admissible support. Then
the 
algebraic degree of sparse polynomial optimization of
\eqref{POP}
is equal to 
\begin{align}
\label{eq:Mixed_Volume_partialDelta}
\operatorname{MV} \left( \newt(f_1), \dots, \newt(f_m),
\partial_1  \newt( \Phi_F ) ,\dots,
\partial_n  \newt(\Phi_F )
\right).
\end{align}
\end{thmx}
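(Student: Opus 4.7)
The strategy is to deduce \Cref{thm: A} from the BKK-generality of the Lagrange system established in \Cref{thm: B}, combined with a combinatorial identity between two mixed volumes.

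First I would show that for generic $\mathbf{F}$ with strongly admissible support, every zero of $\mathbf{L}_{\mathrm{F}}$ lies in the torus $(\C^*)^n \times (\C^*)^m$. Admissibility condition~(2) supplies, for each index $j$, a monomial of each $f_i$ not involving $x_j$, so $\VV(f_1, \dots, f_m)$ is not contained in any coordinate hyperplane of the $x$-variables, and a transversality argument excludes critical points on those hyperplanes for generic data. Genericity of $f_0$ forces the Jacobian $(\partial_j f_i)$ to have full row rank $m$ at any critical point, so the Lagrange multipliers are nonzero. Hence \Cref{thm: B} applies, and the algebraic degree equals the $(n{+}m)$-dimensional mixed volume
\[
\mvol\bigl(\newt(f_1), \dots, \newt(f_m),\, \newt(\ell_1), \dots, \newt(\ell_n)\bigr).
\]

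The main remaining step is the combinatorial identity
\[
\mvol(\dots, \newt(\ell_j), \dots) = \mvol(\dots, \partial_j \newt(\Phi_F), \dots)
\]
in $\R^{n+m}$, where each $\newt(f_i) \subset \R^n$ is embedded into the $x$-coordinate subspace. One always has $\newt(\ell_j) \subseteq \partial_j \newt(\Phi_F)$, and the inclusion is an equality on every vertex with strictly positive $x_j$-coordinate: such a vertex arises by translation from a vertex of $\newt(\Phi_F)$ with $x_j \geq 2$, which lies in the support of $\Phi_F$ for generic coefficients and therefore produces a matching vertex of $\newt(\ell_j)$. All the discrepancy lives on faces of $\partial_j \newt(\Phi_F)$ supported in $\{x_j = 0\}$, whose outward normals $w$ satisfy $w_j \leq 0$. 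By admissibility~(1) and~(2), each remaining polytope $\newt(f_i)$ and $\partial_k \newt(\Phi_F)$ for $k \neq j$ lies in the positive orthant and meets $\{x_j = 0\}$, so its support function vanishes on the relevant normal cone. The facetwise expansion of the mixed volume then forces the discrepant contribution to be zero; replacing $\newt(\ell_j)$ by $\partial_j \newt(\Phi_F)$ one index at a time yields the identity.

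The main obstacle is this combinatorial identity. Although it is transparent in low-dimensional examples, a rigorous proof in general dimension requires a delicate face-by-face comparison of the Cayley polytope $\newt(\Phi_F)$ and its partial-derivative polytopes under the admissibility hypotheses. An alternative and possibly cleaner route, if the direct argument proves cumbersome, is to reinterpret the algebraic degree as a top intersection number on an appropriate toric compactification of $(\C^*)^{n+m}$ associated to the Cayley polytope, where the polytopes $\partial_j \newt(\Phi_F)$ arise naturally as divisor classes and the vanishing of boundary contributions becomes the vanishing of intersections with toric strata disjoint from the critical locus.
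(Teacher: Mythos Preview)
Your proposal has a structural circularity. In the paper, \Cref{thm: B} is \emph{deduced from} \Cref{thm: A}: the argument there is the chain of inequalities
\[
\#\VV(\mathbf{L}_{\mathrm{F}}) \;\leq\; \eqref{eq:Mixed_Volume_Lagrangian} \;\leq\; \eqref{eq:Mixed_Volume_partialDelta} \;=\; \#\VV(\mathbf{L}_{\mathrm{F}}),
\]
where the last equality is precisely \Cref{thm: A}. So invoking \Cref{thm: B} as the starting point for a proof of \Cref{thm: A} is circular. You would need an independent proof of BKK-generality of $\mathbf{L}_{\mathrm{F}}$, and that is essentially as hard as \Cref{thm: A} itself.

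Even setting aside the circularity, the combinatorial identity you need is not established by your sketch. The inclusion $\newt(\ell_j) \subseteq \partial_j \newt(\Phi_F)$ gives only one inequality. Your argument for the reverse---that the discrepancy is confined to faces in $\{x_j=0\}$ and that the support functions of the other polytopes vanish on the relevant normal cones---is too vague: the polytopes $\partial_k\newt(\Phi_F)$ live in $\R^{n+m}$, not $\R^n$, and their behaviour on the $\lambda$-coordinates interacts nontrivially with the Cayley structure. A rigorous face-by-face comparison here would have to control exactly which faces of the Cayley polytope survive after intersecting with $\{x_j\geq 1\}$ and translating, and this is where the admissibility hypotheses enter in a delicate way.

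The paper avoids the combinatorial identity entirely. It works on the projective bundle $\P(\EE)$ over the toric variety $X$ defined by the common refinement of the normal fans, homogenizes the Lagrange equations there, and proves directly (\Cref{lemma: transversality upstairs}) that the intersection is transversal and contained in the big torus. The key technical step (\Cref{lemma: differentials are basepoint free}) shows that the divisor $D_{\Phi_F}-D_{\widetilde{e}_j}$ on $\P(\EE)$ is rationally equivalent to the divisor of the rational polytope $\partial_j\newt(\Phi_F)$; this is where strong admissibility is actually used, to rule out extra rays intervening between $\newt(\Phi_F)$ and its $j$-th facet. The count then follows from a $\Q$-Cartier intersection lemma and BKK. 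This is exactly the ``alternative route'' you gesture at in your last paragraph; it is not merely cleaner but is the actual content of the proof.
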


Here $\Phi_F$ denotes the Lagrangian $\Phi_F(\lambda, y) := f_0 - \sum_{i=1}^m \lambda_i f_i$, as above.
Note that the polytopes $\partial_j  \newt(\Phi_F)$ may be strictly larger than the Newton polytopes
$\newt (\ell_j )$ of the partial derivatives $\ell_j = \frac{\partial}{\partial y_j} \left( f_0 - \sum_{i=1}^m \lambda_i f_i \right)$. The equality in \Cref{thm: A} shows that $\mathbf{L}_{\mathrm{F}}$ is BKK general, giving the next theorem.

\begin{thmx}
\label{thm: B}
Under the assumptions of \Cref{thm: A}
the Lagrange system $\mathbf{L}_{\mathrm{F}}$
is BKK general. 
Further, all critical points are smooth and lie in the algebraic torus $(\C^*)^n$. The number of critical points is equal to the mixed volume
\begin{equation}
\label{eq:Mixed_Volume_Lagrangian}
\operatorname{MV} \left( \newt (f_1), \dots, \newt (f_m),
\newt (\ell_1 ),\dots,
\newt ( \ell_n)
\right).
\end{equation}
If $\mathbf{F}$ is not generic then \eqref{eq:Mixed_Volume_Lagrangian} is an upper bound to the number of isolated, smooth critical points of $f_0$ restricted to $\VV(f_1, \dots, f_m)$.
\end{thmx}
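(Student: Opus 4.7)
The plan is to combine Theorem~\ref{thm: A} with the BKK bound of Theorem~\ref{thm: BKK}. Write
\[
M_A = \operatorname{MV}\bigl(\newt(f_1), \ldots, \newt(f_m), \partial_1 \newt(\Phi_F), \ldots, \partial_n \newt(\Phi_F)\bigr),
\]
\[
M_L = \operatorname{MV}\bigl(\newt(f_1), \ldots, \newt(f_m), \newt(\ell_1), \ldots, \newt(\ell_n)\bigr).
\]
Theorem~\ref{thm: A} identifies the algebraic degree $\# \VV(\mathbf{L}_{\mathrm{F}})$ with $M_A$, while Theorem~\ref{thm: BKK} asserts that the number of isolated solutions of $\mathbf{L}_{\mathrm{F}}$ in $(\C^*)^{n+m}$, counted with multiplicity, is at most $M_L$. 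Since every monomial $\lambda^\beta x^{\alpha - e_j}$ of $\ell_j$ arises from a monomial $\lambda^\beta x^\alpha$ of $\Phi_F$ with $\alpha_j \geq 1$, one has the inclusion $\newt(\ell_j) \subseteq \partial_j \newt(\Phi_F)$ and consequently $M_L \leq M_A$ by monotonicity of mixed volume.

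The heart of the argument is to show that, for generic $\mathbf{F}$ with strongly admissible support, every complex zero of $\mathbf{L}_{\mathrm{F}}$ already lies in the open torus $(\C^*)^{n+m}$. Granting this, a count gives
\[
M_A = \# \VV(\mathbf{L}_{\mathrm{F}}) = \#\bigl(\VV(\mathbf{L}_{\mathrm{F}}) \cap (\C^*)^{n+m}\bigr) \leq M_L \leq M_A,
\]
so equality holds throughout. This simultaneously yields BKK exactness of $\mathbf{L}_{\mathrm{F}}$, the polytopal identity $M_L = M_A$, and the claimed mixed volume count. I expect this absence-of-boundary-solutions step to be the main obstacle, and I would argue it using the toric compactification $X$ supplied by strong admissibility. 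A hypothetical solution with $x_j = 0$ would correspond to a critical point of $f_0$ on the scheme $\VV(f_1, \ldots, f_m) \cap \{x_j = 0\}$; admissibility condition (2) ensures that each $f_i|_{x_j = 0}$ is nontrivial, and the appropriateness of $X$ for $\AA$ makes the closure of $\VV(f_1, \ldots, f_m)$ smooth and transverse to every torus-invariant divisor, so a Bertini-type genericity argument on $X$ rules such a configuration out. To exclude a solution with $\lambda_i = 0$, note that the Lagrange equations would then force $\nabla f_0$ to lie in the span of the gradients $\nabla f_{i'}$ for $i' \neq i$, producing a critical point of $f_0$ on the strictly larger complete intersection $\VV(f_{i'} : i' \neq i)$; for generic $f_0$ no such point lies on $\VV(f_1, \ldots, f_m)$ by a dimension count.

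Smoothness of the critical points follows from generic smoothness: at each zero of $\mathbf{L}_{\mathrm{F}}$ the Jacobian coincides, up to sign and reordering, with the bordered Hessian of $f_0$ with respect to $f_1, \ldots, f_m$, which is nonsingular for generic coefficients. This certifies both smoothness of $\VV(f_1, \ldots, f_m)$ at the critical point and its multiplicity-one contribution to the count. Finally, for non-generic $\mathbf{F}$ with the same monomial support, Theorem~\ref{thm: BKK} still bounds the number of isolated $(\C^*)^{n+m}$-zeros of $\mathbf{L}_{\mathrm{F}}$ above by $M_L$, which in turn bounds the number of isolated smooth critical points of $f_0$ on $\VV(f_1, \ldots, f_m)$, as asserted.
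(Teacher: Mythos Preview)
Your core sandwich argument is exactly the paper's: the inclusion $\newt(\ell_j)\subseteq\partial_j\newt(\Phi_F)$ gives $M_L\le M_A$, Theorem~\ref{thm: A} gives $\#\VV(\mathbf{L}_{\mathrm{F}})=M_A$, and BKK gives the torus count $\le M_L$, forcing equality throughout. So the strategy is correct and matches the paper.

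Where you diverge is in the supporting work. The paper does not re-argue that all solutions lie in $(\C^*)^{n+m}$ or that they are smooth inside the proof of Theorem~\ref{thm: B}; both facts are already contained in Lemma~\ref{lemma: transversality upstairs}, which is established in the course of proving Theorem~\ref{thm: A}. That lemma shows $Z\cap\VV(\widetilde f_1,\dots,\widetilde f_m)$ is a transversal intersection contained in the big torus of $\P(\EE)$, so the paper's proof of Theorem~\ref{thm: B} is three lines plus a citation. Your ad-hoc sketches are therefore redundant once Theorem~\ref{thm: A} is in hand.

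One of your sketches is also not quite right: a solution of $\mathbf{L}_{\mathrm{F}}$ with $x_j=0$ is a critical point of $f_0$ on $\VV(f_1,\dots,f_m)$ that \emph{happens} to lie on $\{x_j=0\}$; it is not a critical point of $f_0$ on $\VV(f_1,\dots,f_m)\cap\{x_j=0\}$, which would involve an extra multiplier for the constraint $x_j=0$. The desired conclusion (no such solution exists generically) is true, but it is exactly what the transversality lemma delivers via the Bertini-type argument on torus orbits of $\P(\EE)$, not via the mechanism you describe. Similarly, the bordered-Hessian argument for smoothness is plausible but unnecessary; transversality of the homogenized Lagrange system already gives multiplicity one at each solution.
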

\begin{proof}
For every $j = 1, \dots, n$ we have the inclusion
$\newt (\ell_j ) \subseteq \partial_j  \newt(\Phi_F)$
of polytopes, showing the inequality
\eqref{eq:Mixed_Volume_Lagrangian}$\leq$\eqref{eq:Mixed_Volume_partialDelta}.
On the other hand, by \Cref{thm: A}, the BKK
bound \eqref{eq:Mixed_Volume_Lagrangian}
of $ \mathbf{L}_{\mathrm{F}} $ constitutes an upper bound to \eqref{eq:degree of sparse pol optimization} and we obtain
\[
\eqref{eq:degree of sparse pol optimization} \leq 
\eqref{eq:Mixed_Volume_Lagrangian} \leq
\eqref{eq:Mixed_Volume_partialDelta}=
 \eqref{eq:degree of sparse pol optimization}.
\]
By \Cref{lemma: transversality upstairs}, all critical points are smooth and all critical points are in the torus. 
\end{proof}

By \Cref{thm: A} the monomial supports $\AA_0, \dots, \AA_m$ contribute to the mixed volume \eqref{eq:Mixed_Volume_partialDelta} only by means of their convex hulls. This gives the following corollary.

\begin{cor}\label{cor: delete mons}
    Under the assumptions of \Cref{thm: A} the algebraic degree of the sparse polynomial optimization problem \eqref{POP}
    and the mixed volume \eqref{eq:Mixed_Volume_Lagrangian}
    depend only on the Newton polytopes $\newt(f_i)$ for $i = 0, \dots, m$.
\end{cor}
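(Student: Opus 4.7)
The approach is to read the statement off Theorem~\ref{thm: A} and Theorem~\ref{thm: B} by checking that every polytope entering \eqref{eq:Mixed_Volume_partialDelta} depends only on the convex hulls $\conv(\AA_i) = \newt(f_i)$, and then to transport this information to \eqref{eq:Mixed_Volume_Lagrangian} via Theorem~\ref{thm: B}.

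First I would treat the formula in Theorem~\ref{thm: A}. The first $m$ arguments of \eqref{eq:Mixed_Volume_partialDelta} are $\newt(f_1),\dots,\newt(f_m)$, which are tautologically convex-hull invariants of the supports. For the remaining $n$ arguments I would unpack $\newt(\Phi_F)$ directly: since
\[
\Phi_F \;=\; f_0 - \sum_{i=1}^m \lambda_i f_i \;\in\; \C[x_1,\dots,x_n,\lambda_1,\dots,\lambda_m],
\]
its monomial support is $\AA_0 \times \{0\} \,\cup\, \bigcup_{i=1}^m \bigl(\AA_i \times \{e_i\}\bigr) \subset \R^{n+m}$, so passing to the convex hull gives
\[
\newt(\Phi_F) \;=\; \conv\Bigl(\conv(\AA_0)\times\{0\}\;\cup\;\bigcup_{i=1}^m \conv(\AA_i)\times\{e_i\}\Bigr).
\]
This is the Cayley-type construction recalled in \Cref{sec: bkk} and it visibly depends only on the polytopes $\conv(\AA_i)$. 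Since $\partial_j Q = (Q - e_j)\cap \R^{n+m}_{\geq 0}$ is defined in terms of the polytope $Q$ alone, the polytopes $\partial_j \newt(\Phi_F)$ are again convex-hull invariants, and so is the whole mixed volume~\eqref{eq:Mixed_Volume_partialDelta}. Combined with Theorem~\ref{thm: A} this proves the claim for the algebraic degree.

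For the mixed volume \eqref{eq:Mixed_Volume_Lagrangian} I would \emph{not} try to analyse the individual $\newt(\ell_j)$ directly, because as observed in \Cref{sec: bkk} these Newton polytopes genuinely depend on the choice of monomial support within a fixed convex hull. Instead I would simply invoke Theorem~\ref{thm: B}: under the hypotheses of Theorem~\ref{thm: A}, the mixed volume \eqref{eq:Mixed_Volume_Lagrangian} equals the algebraic degree, which has just been shown to depend only on $\newt(f_0),\dots,\newt(f_m)$. The only real obstacle in this plan is recognising that \eqref{eq:Mixed_Volume_Lagrangian} should be accessed via Theorem~\ref{thm: B} rather than directly; once this is done, both halves of the corollary are immediate.
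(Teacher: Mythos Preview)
Your proposal is correct and follows essentially the same route as the paper: the paper's one-line proof simply notes that the supports $\AA_0,\dots,\AA_m$ enter \eqref{eq:Mixed_Volume_partialDelta} only through their convex hulls, leaving the transfer to the algebraic degree and to \eqref{eq:Mixed_Volume_Lagrangian} implicit via Theorems~\ref{thm: A} and~\ref{thm: B}. You have spelled out exactly these implicit steps, including the Cayley-polytope description of $\newt(\Phi_F)$ and the use of Theorem~\ref{thm: B} to access \eqref{eq:Mixed_Volume_Lagrangian} indirectly.
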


\begin{remark}
    \Cref{cor: delete mons} has algorithmic consequences if one wishes to numerically find all critical points to \eqref{POP} (as opposed to counting them). We leverage this in \cite{rose2023polyhedral} and efficiently compute polyhedral start systems for $\mathbf{L}_{\mathrm{F}}$ by imposing maximal sparsity on the Newton polytopes of $\mathbf{F}$.
\end{remark}

\begin{example}
Recall the optimization problem \eqref{opt ex} in \Cref{ex1}. \Cref{thm: B} shows that the algebraic degree of this problem is equal to the mixed volume of its Lagrange system. 
A property of mixed volumes is that if $P_1,\ldots,P_n \subset \R^n$ and $Q_1,\ldots,Q_m \subset \R^{n+m}$, then 
\[\mvol(P_1,\ldots,P_n,Q_1,\ldots,Q_m) = \mvol(P_1,\ldots,P_n) \cdot \mvol(\pi(Q_1),\ldots,\pi(Q_m)),\]
where $\pi:\R^{n+m} \to \R^m$ is the projection onto the last $m$ coordinates. Observe that $\newt(\ell_1),\ldots,\newt(\ell_n) \subset \R^{n+1}$ have $n$th coordinate zero. Therefore, 
\small \[ 
\mvol(\newt(\ell_1),\ldots,\newt(\ell_n),\newt(f)) = \mvol(\newt(\ell_1),\ldots,\newt(\ell_n)) \cdot \mvol (\pi_n(\newt(f))
 \]
 \normalsize
where $\pi_n: \R^{n+1} \to \R $ is the projection onto the $n$th coordinate. 

Since for $j = 1,\ldots, n$, $\newt(\ell_j) = \conv(0, \alpha_j)$ for some $\alpha_j \in \R^n$, we can compute $\mvol(\newt(\ell_1),\ldots,\newt(\ell_n)) = \det(M)$ where $M$ is the matrix with $j$th column equal to $\alpha_j$. In our case, this amounts to computing
\begin{align*}
    \det\left( \begin{bmatrix}
        2 & 0  &\ldots & 0\\
        0 & 1   & \ldots & 0 \\
        0 & 0  &  \ddots   & 0 \\
        1 & 1    & \ldots & 1
    \end{bmatrix} \right) = 2.
\end{align*}

Finally, observe that $\pi_n(\newt(f)) = [0,1]$ so it has (mixed) volume one. This gives a geometric proof that the optimization degree of \eqref{opt ex} is $2$, agreeing with the result we computed in \Cref{ex1}.

\end{example}

Our final result, \Cref{thm: C}, weakens the assumption of strongly admissible support needed in \Cref{thm: A} and \Cref{thm: B} and characterizes the algebraic degree of $ \mathbf{F} $ as a more general product in the Chow ring of a toric variety.

To formulate \Cref{thm: C} we first need some notation. We refer to \Cref{sec: prelims} and references therein for a brief introduction to the objects we use. Let  $\AA= (\AA_0,\ldots,\AA_m)$ be an admissible point configuration (\Cref{def: admissible point configuration}) and let $X$ be a smooth toric variety given by the fan $\Sigma$ which is appropriate for $\AA$ (\Cref{def: appropriate}). As usual, the convex hull of each point configuration $\AA_i$ defines a line bundle $\LL_{\AA_i}$. 

Further, since we assume that the fan $\Sigma$ contains the positive orthant as one of its cones, we know that $\Sigma$ contains the rays generated by the standard basis vectors $e_1,\ldots,e_n$ of $\R^n$. We denote by $D_{e_1},\ldots, D_{e_n}$ the corresponding torus-invariant divisors on $X$ and by $\OO_X(D_{e_1}),\ldots,\OO_X(D_{e_n})$ the corresponding line bundles. 
For further reading on toric line bundles, see \cite[Chapter 6]{cox2011toric}.

\begin{thmx}
\label{thm: C}
Let $\mathbf{F} =(f_0,\ldots,f_m)$ be a generic sparse system of polynomials 
in $\mathbb{C}[x_1,\ldots,x_n]$
with admissible support $\AA$
and let $X$ be as above.
Then the algebraic degree of sparse polynomial optimization
\eqref{POP}
is finite and equal
to the degree of the following cycle class:
\begin{equation}
\label{eq: numberofcriticalpoints_via_cohomology}
    c_{1} \left( \LL_{\AA_1}   \right) 
 \cdots 
 c_{1} \left( \LL_{\AA_n}\right)
\left( \operatorname{s}( \EE ) \chern(\FF ) \right)_{n-m},
\end{equation}
where $\EE = \LL_{\AA_0}^{-1}\oplus \cdots \oplus \LL_{\AA_m}^{-1}$ and   $\FF = \OO_X(-D_{e_1}) \oplus \cdots \oplus \OO_X(-D_{e_n}).$

Moreover, if $\mathbf{F}$ is not generic then \eqref{eq: numberofcriticalpoints_via_cohomology} is an upper bound to the number of isolated, smooth critical points of $f_0$ restricted to $\VV(f_1, \dots, f_m)$.
\end{thmx}

The purpose of the following example is to demonstrate that
the assumptions
for \Cref{thm: C}
 are necessary.

\begin{example}
\label{example: demonstrate assumptions for main theorem}

Consider \eqref{POP} where
\begin{align*}
f_0 &= 7 + 11y_1 - 13y_2 - 19y_1y_2 - 2y_1^2 - 5y_2^2 \\
f_1 &= -5y_1y_2 + 29y_1y_2^2 - 17y_1^2y_2 + 61y_1^2y_2^2 + y_1^2 - 3y_2^2.
\end{align*}

Note that the normal fan of $\newt(f_1)$ does not contain $\R^2_{\geq 0}$, so the monomial support of $(f_0,f_1)$ does not form an admissible point configuration, meaning the assumptions of \Cref{thm: C} do not hold in this case.

\begin{center}
      \begin{tikzpicture} [scale = 0.75]
   
   \node at (-1.5,0){ 
 \begin{tikzpicture}[scale = 2]
 
            \coordinate (node) at (-0.6, 0.5);
            \coordinate (O) at (0,0);
            \coordinate (X1) at (1, 0) ;
            \coordinate (X2) at (0, 1) ;
            \node at (node) {$\newt(f_0) :$};
                \node at (1.3,0) {};
                \node at (0,1.2) {};
            \draw[fill=green,opacity=0.3]  (O) -- (X2) -- (X1) -- (O);
 \end{tikzpicture}
 };

   \node at (4,0){ 
 \begin{tikzpicture}[scale = 2]
 
            \coordinate (node) at (-0.5, 0.5);
            \coordinate (O) at (0,0);
            \coordinate (X1) at (1, 0) ;
            \coordinate (X2) at (0, 1) ;
                        \coordinate (X3) at (1,1);

            \node at (node) {$\newt(f_1) :$};

                \node at (0.65,0) {};
                \node at (0,1.2) {};
                \node at (1,1.2) {};

             \draw[fill=green,opacity=0.3]  (X3) -- (X2) -- (X1) -- (X3);

 \end{tikzpicture}
 };
        \end{tikzpicture}  
\end{center}

\noindent To evaluate \eqref{eq: numberofcriticalpoints_via_cohomology}
we denote the Chern classes
\[
     c_{1} \left( \LL_{\AA_0}\right) = [2D_{2} + 4D_{3} + 2D_{4} -2 D_{6}]  \text{ and }     c_{1} \left( \LL_{\AA_1}\right)
      =  [2D_{2} + 2D_{3} + 2D_{4}], 
\]
and the vector bundles
\[
\EE = \LL_{\AA_0}^{-1} \oplus  \LL_{\AA_1}^{-1}   \text{ and } \FF = \OO_X(D_1)^{-1} \oplus \OO_X(D_5)^{-1}.
\]
Here, $X$ is the smooth toric variety defined by the complete fan $\Sigma$
with ray generators
\[
\rho_1 = (0,1), \ \rho_2 = (1,1), \ \rho_3 = (1,0), \ \rho_4 = (0,-1), \ \rho_5 = (-1,-1), \ \rho_6 = (-1,0).
\]
\begin{center}
        \begin{tikzpicture} 
            \coordinate (node) at (0, 0);
            \coordinate (O) at (0,0);
            \coordinate (X0) at (0, 1) ;

            \coordinate (X1) at (1, 1) ;
            \coordinate (X2) at (1, 0) ;
            \coordinate (X3) at (0, -1) ;
            \coordinate (X4) at (-1, -1) ;
            \coordinate (X5) at (-1, 0) ;
    
            \filldraw [black] (O) circle (2pt); 
            \draw[fill=yellow,opacity=0.3]  (O) -- (X0) -- (X1);
            \draw[fill=yellow,opacity=0.3]  (O) -- (X2) -- (X1);
            \draw[fill=yellow,opacity=0.3]  (O) -- (X3) -- (X2);
            \draw[fill=yellow,opacity=0.3]  (O) -- (X4) -- (X3);
            \draw[fill=yellow,opacity=0.3]  (O) -- (X4) -- (X5);
            \draw[fill=yellow,opacity=0.3]  (O) -- (X5) -- (X0);

            \draw[draw = black, very thick] (O) --(X0);
            \draw[draw = black, very thick] (O) --(X1);
            \draw[draw = black, very thick] (O) --(X2);
            \draw[draw = black, very thick] (O) --(X3);
            \draw[draw = black, very thick] (O) --(X4);
            \draw[draw = black, very thick] (O) --(X5);

    \node at (0,1.5) {$\rho_1$};
    \node at (1.3,1.3) {$\rho_6$};
    \node at (1.5,0) {$\rho_5$};
    \node at (-1.3,-1.3) {$\rho_3$};
                \node at (0,-1.5) {$\rho_4$};
                \node at (-1.5,0) {$\rho_2$};
        \end{tikzpicture}
\end{center}

We have $c_1(\FF) = [-D_{1} - D_{5}]$, and 
\[s_1( \EE )  =  \frac{-1}{ c_1(\EE)   } = c_{1} \left( \LL_{\AA_0}\right) +c_{1} \left( \LL_{\AA_1}\right) = [4D_{2} + 6D_{3} + 4D_{4}  -2 D_{6}].
\]
Finally, direct computation shows 
\[
\operatorname{deg} \left(
c_{1} \left( \LL_{\AA_1}   \right) \cdot \left( \operatorname{s}( \EE ) \chern(\FF ) \right)_{1} \right)  = 
\operatorname{deg} \left(
c_{1} \left( \LL_{\AA_1}   \right) \cdot  \left(    c_1(\FF) +  s_1( \EE )       \right) \right) = 12.
\]
In this case,
\eqref{eq: numberofcriticalpoints_via_cohomology} gives $12$, 
while the algebraic degree of $\mathbf{F} = (f_0,f_1)$ is $10$.
In particular, the this shows the assumptions of \Cref{thm: C} are necessary. On the other hand, the BKK bound of $\mathbf{L}_{\mathrm{F}}$ is $10$, so the bound in
\Cref{thm: A} and \Cref{thm: B} is tight.
Although the assumptions in \Cref{thm: A} and \Cref{thm: B} are stronger than the ones in \Cref{thm: C}, we believe that \Cref{thm: A} and \Cref{thm: B} hold in greater generality than \Cref{thm: C}.

\end{example}

\section{Sparse ED, polar and sectional degrees}
In this section, we discuss important corollaries of \Cref{thm: A} and \Cref{thm: B} which relate Euclidean distance optimization, polar degrees, and sectional degrees to mixed volumes.

We first consider polynomial optimization problems where the objective function is $f_0 = \lVert y - u \rVert_2^2$  for a generic point $u \in \R^n$. Let $(f_1,\ldots,f_m )$ be a sparse polynomial system with variety $Y = \VV(f_1,\ldots,f_m) \subset \C^n$. The \emph{ED degree} of $Y$ is the number of complex critical points of the optimization problem:
\begin{align}
    \min_{y \in \R^n} \ \lVert y - u \rVert_2^2 \quad \text{subject to} \quad f_1(y) = \ldots = f_m(y) = 0. \tag{ED} \label{ED}
\end{align}
Equivalently, it is the algebraic degree of $\mathbf{F} = (f_0,\ldots,f_m)$.
This brings us to the main result of this section, which relates ED degrees and mixed volumes.
To simplify notation, for polynomials $\mathbf{F} =(f_1,\ldots,f_m)$ we write
\[\mvol(F) := \mvol(\newt(f_1),\ldots,\newt(f_m)).\]

\begin{cor}[Euclidean distance objective function]\label{cor:EDdegree}
Let $(f_1,\ldots,f_m)$ be a generic sparse polynomial system with variety $\VV(f_1,\ldots,f_m) = Y \subset \C^n$ and $f_0 = \lVert y -  u \rVert_2^2$ where $u$ is a generic point in $\R^n$.
If the monomial support of $\mathbf{F} = (f_0,\ldots,f_m )$ is strongly admissible, then 
\[ \mvol(\mathbf{L}_F) = \text{ED degree}(Y). \]
\end{cor}
\begin{proof}
Consider the \emph{weighted Euclidean distance function}, $f_C = \lVert Cy - u \rVert_2^2$, where $C$ is an $n \times n$ diagonal matrix with generic entries and define $\mathbf{F}_C = (f_C, f_1,\ldots,f_m )$. 
\Cref{thm: A} implies that the degree and mixed volume of $\mathbf{L}_{\mathrm{F}_C}$ are equal. Call this value $\eta$.

Observe that the variety of $\mathbf{L}_{\mathrm{F}_c}$ is in bijection with the critical points of

\begin{align} 
\min_{y \in \mathbb{R}^n} \  \lVert y - u \rVert_2^2 \quad \text{subject to} \quad f_1(C^{-1}y) = \cdots = f_m(C^{-1}y) =  0. \label{eq:ED}
\end{align}

This gives that there are $\eta$ critical points to \eqref{eq:ED}. 
Notice that the monomial support of $(f_1(C^{-1}(y)),\ldots,f_m(C^{-1}(y))$ is the same as that of $(f_1(y),\ldots,f_m(y))$, since $C$ is diagonal. 
Therefore, the degree of $\mathbf{L}_{F}$ is equal to its mixed volume.
\end{proof}

Now that we have established a relationship between ED degrees and mixed volumes, we next recall that in \cite{EDdegree2016} a relationship between ED degrees and polar degrees was established. 
Let $\overline{Y} \subset \P^{n}$ be a projective variety.
For a point $y \in \overline{Y}$, denote $T_x\overline{Y}$ as the tangent space of $\overline{Y}$ at $y$. Denote the conormal variety of $\overline{Y}$ as 
\begin{align}
\mathcal{N}_{\overline{Y}} = \overline{\{ (y,z) \in \P^{n} \times (\P^{n})^* \ : \ y \in \overline{Y}_\text{sm}, \ z \perp T_y\overline{Y} \} }, \label{eq: conormal}
\end{align}
where $(\mathbb{P}^n)^*$ is the \emph{dual projective space}, which is the projective space consisting of all hyperplanes in $\mathbb{P}^n$. For a proper subvariety, $\overline{Y} \subset \mathbb{P}^n$, the dimension of $\mathcal{N}_{\overline{Y}}$ is independent of the dimension of $\overline{Y}$ and $\dim (\mathcal{N}_{\overline{Y}}) = n-1$.

    For an irreducible variety $\overline{Y} \subset \P^{n}$, the \emph{$i$-th polar degree of $\overline{Y}$} is 
    $$\delta_i(\overline{Y}) = | \mathcal{N}_{\overline{Y}} \cap (L_1 \times L_2)|,$$
    where $L_1 \subseteq 
    \P^{n}$
    is a generic linear space of dimension
    $n + 1 - i$ and $L_2 \subseteq (\P^{n})^*$ is a 
    generic linear space of dimension $i$.
    The intersection 
    $\mathcal{N}_X \cap (L_1 \times L_2)$ is finite, since
    $$\dim(\mathcal{N}_{\overline{Y}}) + \dim(L_1 \times L_2) = 2n = \dim(\P^{n} \times (\P^{n})^*).$$

While in our situation, the requirement that $\overline{Y}$ is an irreducible projective variety 
does not typically hold, we remark that by considering an affine variety $Y \subset \C^n$ defined by polynomial equations with strongly admissible support, we can simply consider the \emph{projective closure} of $Y$, which we denote $\overline{Y}$. 
Under mild genericity conditions, the projective closure of $Y$ is defined by homogenizing the defining equations of $Y$ with respect to a new variable, $y_0$. 

In general the ED degree of $Y$ need not be equal to that of $\overline{Y}$ \cite[Example 6.6]{EDdegree2016}, but 
under certain transversality conditions they are. Given two varieties $V, W \subseteq \mathbb{P}^n$, we say that the the intersection $V \cap W$ is \emph{transversal} if the scheme theoretic intersection $V \cap W$ is smooth.
Let $H_{\infty} = \P^n \backslash \C^n = \VV(y_0)$ denote the hyperplane at infinity and denote $Y_{\infty} = \overline{Y} \cap H_{\infty}$ and $Q_{\infty} = \{y_0^2 + \ldots + y_n^2 = 0\}$.

\begin{theorem}[Theorem 6.11 \cite{EDdegree2016}]\label{thm:affine ed polar}
Let $Y \subset \C^n$ be an irreducible, affine variety 
and $\overline{Y} \subset \P^n$ its projective closure. Assume that the intersections $ \overline{Y} \cap H_{\infty}$ and $Y_{\infty} \cap Q_{\infty}$ are both transversal. Then,
 \[\text{ED Degree}(Y) = \delta_0(\overline{Y}) + \delta_1(\overline{Y}) + \cdots + \delta_{n-1}(\overline{Y})\]
where $\delta_i(\overline{Y})$ is the $i$-th polar degree of $\overline{Y}$.
\end{theorem}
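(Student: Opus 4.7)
The plan is to deduce the statement from Theorem~\ref{thm:polar degrees} applied to the affine cone over $\overline{X}$, and to use the two transversality hypotheses to exchange ED degrees between the affine, projective, and conical settings.

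First, I would set up the three varieties under consideration: the given affine $X\subset \C^n$, its projective closure $\overline{X}\subset \P^n$ obtained by homogenizing a defining ideal with respect to $x_0$, and the affine cone $\hat{X}\subset \C^{n+1}$ over $\overline{X}$. By Theorem~\ref{thm:polar degrees} applied to $\hat{X}$, one has $\mathrm{EDdegree}(\hat{X}) = \sum_{i=0}^{n-1}\delta_i(\overline{X})$, provided that the conormal variety $\mathcal{N}_{\hat{X}}$ is disjoint from the diagonal $\Delta(\P^n)$. The first key step is to prove exactly this disjointness. The classical characterization of $\mathcal{N}_{\hat{X}}\cap\Delta(\P^n)$ says that a point $[x]$ lies in this intersection precisely when $x\in\overline{X}^{\mathrm{sm}}$ satisfies $x\perp T_x\overline{X}$, i.e.\ when $x$ is an isotropic vector tangent to $\overline{X}$. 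I would show, using the hypothesis that $X_\infty\cap Q_\infty$ is transversal in $H_\infty$, that any such $x$ would force a non-transversal intersection of $\overline{X}\cap H_\infty$ with the isotropic quadric inside $H_\infty$, contradicting the assumption. The transversality of $X_\infty=\overline{X}\cap H_\infty$ enters here in guaranteeing that $\overline{X}$ is smooth along $X_\infty$, so that tangent space computations are well-defined there.

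Second, I would relate $\mathrm{EDdegree}(\hat{X})$ to the ED degree of $\overline{X}$ with respect to a generic point $[u_0:\dots:u_n]\in\P^n$, using the standard fact that critical points of the squared distance on the cone correspond, up to the scaling $\C^*$ action, to projective critical points on $\overline{X}$. Third, and this is the main step, I would show that under the transversality hypotheses the affine ED degree $\mathrm{EDdegree}(X)$ coincides with this projective count. Concretely, the critical equations for $\|x-u\|^2$ on $X$ extend to critical equations on $\overline{X}$ away from $H_\infty$, and any extraneous critical point in the limit must lie in $X_\infty\cap Q_\infty$, because the Lagrange conditions at infinity become the incidence $x\in X_\infty$ together with the isotropy $x\in Q_\infty$. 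Transversality of $X_\infty\cap Q_\infty$ implies that this intersection is reduced of the expected dimension and that, for generic $u$, no limiting critical point escapes to it. Combined with the first step, this yields $\mathrm{EDdegree}(X)=\mathrm{EDdegree}(\hat{X})=\sum_{i=0}^{n-1}\delta_i(\overline{X})$.

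The main obstacle I anticipate is the careful bookkeeping in the second and third steps: one must verify that the projective-to-affine comparison of critical points is a bijection (no multiplicity is lost and no point at infinity contributes), and that the two transversality hypotheses are precisely what is needed to rule out all the degenerate configurations. A clean way to organize this is to work on the incidence variety inside $\P^n\times\P^n$ cutting out the Lagrange conditions, identify its components at infinity with loci controlled by $X_\infty$ and $X_\infty\cap Q_\infty$, and show that transversality forces these components to have dimension strictly less than expected, hence not to contribute to the generic fiber over the data point $u$. Once this geometric picture is established, the equalities of degrees follow formally from Theorem~\ref{thm:polar degrees}.
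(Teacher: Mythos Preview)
This theorem is not proved in the paper: it is quoted verbatim as Theorem~6.11 of \cite{MR3451425} and used as a black box. There is therefore no ``paper's own proof'' to compare your proposal against; the paper simply cites the result and moves on to deduce \Cref{cor:mvol and polar} from it together with \Cref{cor:EDdegree}.

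That said, your sketch is broadly in the spirit of the argument in the original source \cite{MR3451425}: one reduces to \Cref{thm:polar degrees} for the affine cone over $\overline{X}$, uses the transversality of $X_\infty\cap Q_\infty$ to verify the hypothesis $\mathcal{N}_{\overline{X}}\cap\Delta(\P^n)=\emptyset$, and uses the transversality of $\overline{X}\cap H_\infty$ to rule out critical points escaping to infinity so that the affine and projective ED counts coincide. If you want to turn this into a complete proof you would need to consult \cite{MR3451425} directly, since the present paper supplies none of these details.
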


As a consequence of \Cref{cor:EDdegree} and \Cref{thm:affine ed polar} we establish a relationship between polar degrees and mixed volumes. To our knowledge, this is the first time a connection between convex geometry and polar degrees has been made.

\begin{cor}\label{cor:mvol and polar}
    Let $Y \subset \C^n$ be an irreducible affine variety defined by polynomials $(f_1,\ldots,f_m)$ 
    and let $\overline{Y} \subset \P^n$ be its projective closure. Assume that the intersections $Y_{\infty} = \overline{Y} \cap H_{\infty}$ and $Y_{\infty} \cap Q_{\infty}$ are both transversal. Let $\mathbf{L}_{\mathrm{F}}$ be the Lagrange system of $\mathbf{F} = (f_0,\ldots,f_m)$ where $f_0 = \lVert y - u \rVert_2^2$ for generic $u \in \mathbb{R}^n$ and $\mathbf{F}$ has strongly admissible support. Then
    \begin{align*}
        \mvol(\mathbf{L}_{\mathrm{F}}) = \delta_0(\overline{Y}) + \cdots + \delta_{n-1}(\overline{Y}),
    \end{align*}
    where $\delta_i(\overline{Y})$ is the $i$-th polar degree of the projective closure $\overline{Y}$.
\end{cor}

\begin{example}
    Consider the Euclidean distance optimization problem
    \begin{align*}
        \min_{y \in \R^2} \ \left\lVert \begin{bmatrix}
            y_1 \\ y_2
        \end{bmatrix} -\begin{bmatrix}
            1 \\ 1
        \end{bmatrix} \right\rVert_2^2 \quad \text{subject to} \quad 4y_1^2 +2y_2^2 - y_1y_2 -1 = 0 .
    \end{align*}
    The Lagrange system of this optimization problem is
    \begin{align*}
        \mathbf{L}_{\mathrm{F}} =  (2 (y_1 - 1) - \lambda (8 y_1 - y_2), 2 (y_2 - 1) - \lambda (-y_1 + 4 y_2), 4y_1^2 +2y_2^2 - y_1y_2 - 1  ).
    \end{align*}
    The mixed volume of $\mathbf{L}_{\mathrm{F}}$ is four and there are indeed four complex solutions $(y_1,y_2,\lambda)$, two of which are real:
    \begin{align*}
        &[0.3864, 0.5557 , -0.4840 ], \\
        &[-0.3050, -0.6418 , 1.4516],\\ 
        &[-0.1407 - 1.6318 \textbf{i}, 2.3431 - 0.5950 \textbf{i}, 0.2904 - 0.1023 \textbf{i}],\\
        &[-0.1407 + 1.6318 \textbf{i}, 2.3431 + 0.5950 \textbf{i}, 0.2904 + 0.1023\textbf{i}]
    \end{align*}
\begin{figure}[h!]
    \centering
    \includegraphics[width = 0.5\textwidth]{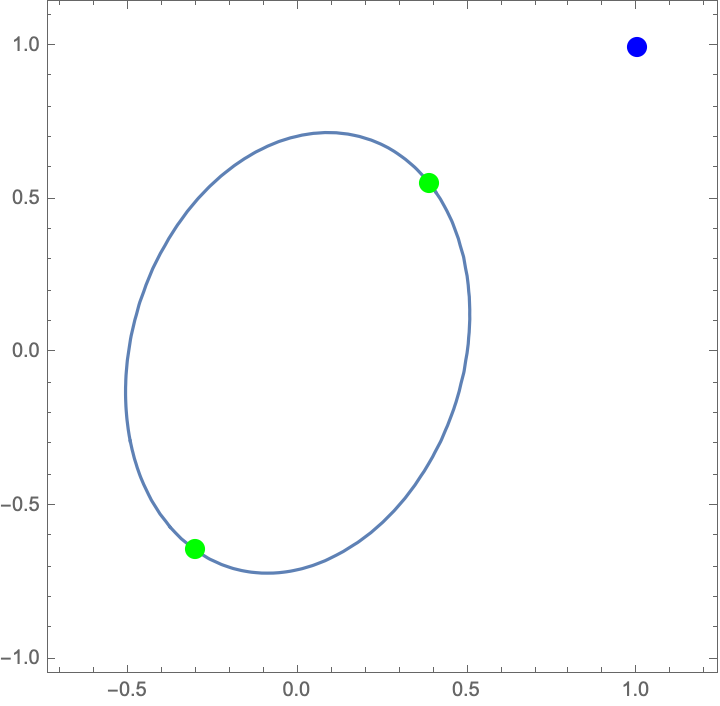}
    \caption{The ellipse $4y_1^2 + 2y_2^2 - y_1y_2 -1 = 0$ along with the critical points (green) of the Euclidean distance problem from the point $(1,1)$ (blue).}
    \label{fig:euclid ex}
\end{figure}

Now we consider the projective closure of the variety 
\[Y = \mathcal{V}(4 y_1^2 + 2y_2^2 - y_1y_2 - 1) \subset \C^2 \]
which is defined as 
\[\overline{Y} = \VV(4y_1^2 + 2y_2^2 - y_1 y_2 - y_0^2) \subset \P^2.\] 
The conormal variety of $\overline{Y}$, $\mathcal{N}_{\overline{Y}} \subset \P^2 \times (\P^2)^*$, is defined as
\begin{align*}
    \mathcal{N}_{\overline{Y}} &= \{([y_0 : y_1 : y_2], [z_0:z_1:z_2]) \in \mathbb{P}^2 \times (\P^2)^* \ : f_1 = f_2 = f_3 = f_4 = f_5 = f_6 = 0 \}
\end{align*}
where the polynomials $f_1,f_2,f_3,f_4,f_5,f_6$ are
\begin{align*}
 f_1 &= -y_0^2+4y_1^2-y_1y_2+2y_2^2 \\  
 f_2 &= y_1z_1-4y_2z_1+8y_1z_2- y_2z_2 \\ 
 f_3 &= 31z_0^2-8z_1^2-4z_1z_2-16z_2^2 \\
 f_4 &= 31y_2z_0+2y_0z_1+16y_0z_2 \\
 f_5 &= 31y_1z_0+8y_0z_1+2y_0z_2\\ 
 f_6 &=y_0z_0+4y_2z_1-8y_1z_2+2y_2z_2 .
\end{align*}
Direct computation shows that $\delta_0(\overline{Y}) = 2$ and $\delta_1(\overline{Y}) = 2$, and that the ED degree of $Y$ equals the sum of the polar degrees of its projective closure as expected.

We conclude this section by making a final connection to sectional degrees. The notion of \emph{sectional degrees} was recently studied in \cite{maxim2023linear}. Given an affine variety $Y \subset \C^n$, the \emph{$i$-th sectional degree} of $Y$, denoted $s_i(Y)$, is defined as the algebraic degree of the optimization problem
\begin{align}
    \min_{y \in \R^n} \ \langle u,y \rangle \quad \text{subject to} \quad y \in Y \cap H_1 \cap H_2 \cap \cdots \cap H_i \tag{SO$_i$} \label{opt: SO}
\end{align}
where $\langle u, \cdot \rangle $ is a generic linear function and $H_1,\ldots, H_i$ are generic affine linear hyperplanes. As an immediate consequence of \Cref{thm: B} we have a convex algebraic interpretation of $s_i(Y)$.

\begin{cor}\label{cor: mvol sectional}
    Let $Y \subset \C^n$ be an affine variety defined by generic polynomials $(f_1,\ldots,f_m )$ where for generic $u \in \mathbb{R}^n$, $\mathbf{F} = (\langle u, y \rangle, f_1,\ldots, f_m)$ has strongly admissible support. Let $\mathbf{L}_{\mathrm{F}}$ be the Lagrange system of $\mathbf{F} $ corresponding to the sectional optimization problem \eqref{opt: SO}. Then
    \begin{align*}
\mvol(\mathbf{L}_{\mathrm{F}}) = s_i(Y)
    \end{align*}
    where $s_i(Y)$ is the $i$-th sectional degree of $Y$.
\end{cor}

Furthermore, \cite{maxim2023linear} gives a condition where sectional and polar degrees agree. This condition relies on the \emph{dual variety} of $\overline{Y} \subseteq \mathbb{P}^n$ which is defined as 
$$ \overline{Y}^{\vee} = \pi_2(\mathcal{N}_{\overline{Y}})$$
where $\pi_2: \mathbb{P}^n \times (\mathbb{P}^n)^* \to (\mathbb{P}^n)^*$ is the projection onto the second coordinate and $\mathcal{N}_{\overline{Y}}$ is the conormal variety of $\overline{Y}$ as defined in \eqref{eq: conormal}.

Specifically, \cite[Corollary 6.8]{maxim2023linear} states that if $Y \subset \C^n$ is an affine variety with projective closure $\overline{Y} \subset \P^n$ such that $H_{\infty}$ is not contained in $\overline{Y}^{\vee}$, then $s_i(Y) = \delta_{i}(\overline{Y})$ for all $0 \leq i \leq \dim (Y)$. Therefore, as a corollary of this, \Cref{cor:mvol and polar} and \Cref{cor: mvol sectional} we get the following equality of mixed volumes.

\begin{cor}
    Let $Y \subset \C^n$ be an affine variety defined by polynomials $(f_1,\ldots,f_m)$ such that for generic $u \in \R^n$ the polynomial systems
    \begin{align*}
        \mathbf{E} &= (\lVert y - u \rVert_2^2,f_1\ldots,f_m), \quad \text{and} \\
        \mathbf{F} &= (\langle u,y \rangle, f_1,\ldots, f_m)
    \end{align*}
    have strongly admissible support. Let $\mathbf{L}_{\mathrm{E}}$ be the Lagrange system of $\mathbf{E} $ corresponding to the Euclidean distance optimization problem \eqref{ED} and $\mathbf{L}_{F_i}$ the Lagrange system of $\mathbf{F}_i$ corresponding to the $i$-th sectional optimization problem \eqref{opt: SO}. Assume that $H_{\infty}$ is not contained in the dual variety of $\overline{Y}$. Then 
    \begin{align*}
        \mvol(\mathbf{L}_{\mathrm{E}}) &= \sum_{i=0}^{n-1} \mvol(\mathbf{L}_{\mathrm{F}_i}).
    \end{align*}
\end{cor}

Observe that under certain genericity conditions, the results in \cite{maxim2023linear} show that sectional degrees are the affine analog of polar degrees. With this in mind and the aforementioned results, we have the following conjecture.

\begin{conj}\label{conj: sectional}
    Let $Y \subseteq \C^n$ be an irreducible, affine variety and $\overline{Y} \subset \P^n$ its projective closure. If $\overline{Y}$ intersects $Q_{\infty}$ transversely
    then
    the ED degree of $Y$ is equal to $s_0(Y) + \ldots + s_{n-1}(Y)$.
\end{conj}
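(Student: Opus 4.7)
The plan is to combine Theorem~\ref{thm:affine ed polar} with \cite[Corollary 6.8]{maxim2023linear} by means of a generic deformation argument that eliminates the stronger transversality hypothesis on $X_\infty = \overline{X} \cap H_\infty$ that appears in Theorem~\ref{thm:affine ed polar}. Consider a one-parameter family $X_\epsilon$ of deformations of $X$, obtained by perturbing the coefficients of the defining equations $(f_1, \dots, f_m)$. For $\epsilon$ in a Zariski open subset of a small neighborhood of $0$, the projective closure $\overline{X_\epsilon}$ satisfies the stronger hypotheses of Theorem~\ref{thm:affine ed polar}, namely both $\overline{X_\epsilon} \cap H_\infty$ and $\overline{X_\epsilon} \cap H_\infty \cap Q_\infty$ are transverse, and additionally $H_\infty \not\subseteq (\overline{X_\epsilon})^\vee$. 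For such $\epsilon$, Theorem~\ref{thm:affine ed polar} gives $\mathrm{ED}(X_\epsilon) = \sum_{i=0}^{n-1} \delta_i(\overline{X_\epsilon})$, while \cite[Corollary 6.8]{maxim2023linear} gives $\delta_i(\overline{X_\epsilon}) = s_i(X_\epsilon)$, so that $\mathrm{ED}(X_\epsilon) = \sum_{i=0}^{n-1} s_i(X_\epsilon)$.

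The next step is to pass to the limit $\epsilon \to 0$. Both sides are upper semi-continuous critical-point counts, and the obstruction to equality in the limit is the possibility of critical points escaping to infinity. The geometric input is that escape of critical points of Euclidean distance is governed, via \cite[Theorem~5.4]{DraismaTheEDD}, by the intersection of the conormal variety $\mathcal{N}_{\overline{X}}$ with the diagonal $\Delta(\P^{n-1})$, which in turn is controlled by the intersection of $\overline{X}$ with $Q_\infty$. Since $\overline{X}$ meets $Q_\infty$ transversely by hypothesis and transversality is an open condition, this intersection remains transverse in a punctured neighborhood of $\epsilon = 0$. The same argument applies slice-by-slice to each sectional problem, since intersecting with generic affine hyperplanes preserves transversality with $Q_\infty$. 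Conservation of number then forces the limiting identity $\mathrm{ED}(X) = \sum_{i=0}^{n-1} s_i(X)$, as both invariants are limits of the common value $\mathrm{ED}(X_\epsilon)$ along the family.

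The main obstacle is verifying mass conservation rigorously when the limit variety $\overline{X}$ has non-transverse intersection with $H_\infty$. One must analyze the total conormal space $\bigcup_\epsilon \mathcal{N}_{\overline{X_\epsilon}}$ and show that its intersection with $\Delta(\P^{n-1})$ does not acquire excess components concentrated in $H_\infty$ as $\epsilon \to 0$. The transversality of $\overline{X}$ with $Q_\infty$ should prevent this pathology, but making this precise likely requires a flat-family analysis in the spirit of \cite{DraismaTheEDD} and \cite{MR3451425}, together with a parallel analysis for each sectional slice to justify $s_i(X_\epsilon) \to s_i(X)$ individually.
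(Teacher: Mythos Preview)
The statement you are attempting to prove is labeled a \emph{Conjecture} in the paper, and the paper does \emph{not} supply a proof; it only offers a single supporting example. So there is no ``paper's own proof'' to compare your proposal against.

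Your outline is honest about its incompleteness, and the gap you flag at the end is the real one. Two concrete difficulties deserve emphasis.

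First, the deformation step is not as innocent as it looks. The hypothesis you wish to \emph{gain} by perturbation is transversality of $\overline{X_\epsilon}\cap H_\infty$. But the intersection $\overline{X}\cap H_\infty$ is governed by the top-degree parts of the defining equations, and ``perturbing the coefficients of $(f_1,\dots,f_m)$'' within their given monomial support need not alter that top-degree part at all. In the paper's own example $X=\VV(x_1^2-x_2)$, any perturbation of the coefficients of the monomials $x_1^2$, $x_2$, $1$ leaves the leading form proportional to $x_1^2$, so $\overline{X_\epsilon}\cap H_\infty$ remains the non-reduced point $[0{:}1{:}0]$ for every $\epsilon$. To repair this you must enlarge the support (e.g.\ add an $x_2^2$ term), but then you are deforming $X$ outside any natural family containing it, and the conjecture is about a fixed $X$, not about varieties with a prescribed Newton polytope.

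Second, even granting a good deformation, the limit step requires more than upper semicontinuity. You need \emph{both} $\mathrm{ED}(X_\epsilon)\to\mathrm{ED}(X)$ and $s_i(X_\epsilon)\to s_i(X)$ for each $i$. The hypothesis that $\overline{X}$ meets $Q_\infty$ transversely does constrain where ED-critical points can escape, but it says nothing directly about the sectional problems: the linear objective $\langle u,x\rangle$ has no isotropic quadric attached to it, and escape of sectional critical points to $H_\infty$ is controlled instead by whether $H_\infty\subseteq(\overline{X})^\vee$, a condition you have not assumed and which can genuinely fail (again, the parabola is an example). Your appeal to ``the same argument applies slice-by-slice'' does not go through, because the geometry governing escape is different for the ED problem and for the linear-objective problems. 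Without an independent argument that each $s_i$ is preserved in the limit, the identity for $X_\epsilon$ does not descend to $X$.
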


To provide one piece of evidence for \Cref{conj: sectional} and highlight its distinction from \Cref{thm:affine ed polar}, we give an example of a variety $Y$ where \Cref{conj: sectional} is true but \Cref{thm:affine ed polar} gives a strict upper bound on the ED degree of $Y$.

\begin{example}
    Consider the affine variety $Y = \VV(y_1^2 - y_2) \subset \R^2$. We can directly compute the ED degree of $Y$ to be three. The sectional degrees of $Y$ are $s_0(Y) = 1$ and $s_1(Y) = 2$. In this case \Cref{conj: sectional} holds.
    
    Conversely, we can consider the polar degrees of $\overline{Y} = \VV(y_1^2 - y_2 y_0)$ and compute that $\delta_0(\overline{Y}) = 2$ and $\delta_1(\overline{Y}) = 2$. This provides an example where the sum of the polar degrees of $\overline{Y}$ is a strict upper bound on the ED degree of $Y$ but the sum of the sectional degrees is exact. 
\end{example}
\end{example}

\section{Homogeneous equations for critical points}
\label{sec: homeq}

In this section, we define homogeneous critical point equations for the optimization problem \eqref{POP}.
We give two different sets of critical point equations for $\eqref{POP}$.
On the one hand, in \cite{ MR2507133} critical points are characterized 
as an intersection of the vanishing locus of homogeneous equations $\{\widetilde{f}_1 = \cdots = \widetilde{f}_m = 0\}$ with a projective determinantal variety $W$.
We generalize this approach by replacing projective space with an appropriate toric variety $X$.
On the other hand, we homogenize the Lagrange equations
$\mathbf{L}_{\mathrm{F}} = (f_1,\ldots,f_m,\ell_1,\ldots,\ell_n)$
in the Cox ring of a toric variety, $\P(\EE)$, which we introduce now. We show that both approaches define the desired critical point equations in \Cref{lemma: homogeneous criticality equations are correct} with \eqref{char1} concerning the former approach and \eqref{char2} the latter.

\subsection{Toric projective bundles} 
We now describe the toric structure on the projectivization of a direct sum of line bundles on toric varieties.
Let $X$ be a complete toric variety given by a fan $\Sigma$ and let $\EE=\LL_0\oplus\ldots\oplus \LL_m$ be a direct sum of line bundles on $X$. In this subsection, we will describe the fan of the total space of the projectivization $\P(\EE)$. For a more general study of line bundles with toric variety fiber see \cite{hofscheier2020cohomology}. 
\begin{lemma}\label{lem:projistoric}
    Let $X$ be a toric variety and let $\EE=\LL_0\oplus\ldots\oplus\LL_m$ 
    be a direct sum of line bundles. The total spaces of $\EE$ and $\P(\EE)$ can be given the structure of a toric variety. 
\end{lemma}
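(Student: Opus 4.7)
The plan is to construct the fans of $\EE$ and $\P(\EE)$ explicitly, and thereby exhibit them as toric varieties together with toric morphisms to $X$. Let $\Sigma \subseteq N_\R$ be the fan of $X$, where $N \cong \Z^n$. Since $\LL_i$ is a torus-equivariant line bundle, it corresponds to a $\Sigma$-piecewise linear function $\varphi_i: |\Sigma| \to \R$ representing a torus-invariant Cartier divisor in its class. This data will be the input for the fan construction.

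First I would recall (or verify) the standard fact that each individual line bundle $\LL_i$ has a canonical toric structure whose fan lives in $N_\R \oplus \R$, with maximal cones of the form
\[
\widetilde{\sigma}_i = \mathrm{cone}\bigl(\{(v, -\varphi_i(v)) : v \text{ a ray of } \sigma\} \cup \{(0,1)\}\bigr),
\]
for $\sigma \in \Sigma$, together with their faces. The projection $N_\R \oplus \R \to N_\R$ induces a toric morphism to $X$ with fiber $\C$, recovering $\LL_i \to X$. Next, I would form the fiber product $\EE = \LL_0 \times_X \cdots \times_X \LL_m$. Fiber products of toric morphisms correspond on the level of fans to a natural gluing in $N_\R \oplus \R^{m+1}$: the maximal cones are indexed by maximal $\sigma \in \Sigma$ and are generated by the simultaneous lifts $\{(v, -\varphi_0(v), \ldots, -\varphi_m(v)) : v \text{ a ray of } \sigma\}$ together with the $m+1$ vertical rays $\R_{\geq 0} e_{n+1+j}$, $j = 0, \ldots, m$. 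This realises $\EE$ as an $(n+m+1)$-dimensional toric variety fibered over $X$.

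For $\P(\EE)$, I would use the quotient description
\[
\P(\EE) = \bigl(\EE \setminus \sigma_0(X)\bigr) / \C^{*},
\]
where $\sigma_0$ is the zero section and $\C^*$ acts by simultaneous rescaling of fiber coordinates. This diagonal action is a one-parameter subgroup of the dense torus of $\EE$, so the quotient is again toric. Its fan lives in the quotient lattice $(N \oplus \Z^{m+1})/\Z(0,1,\ldots,1)$ and has rays of two kinds: the lifted rays $(v, -\varphi_0(v), \ldots, -\varphi_m(v))$ for rays $v$ of $\Sigma$, and the $m+1$ classes of the vertical rays $e_{n+1+j}$ (which, after the quotient, become the ray generators of the fan of $\P^m$ on each fiber). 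The projection to $N$ gives the desired toric morphism $\P(\EE) \to X$ with fiber $\P^m$.

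The main technical point to verify is that the cone collections just described genuinely form fans, i.e.\ that they are closed under taking faces and that any two cones intersect in a common face. Both conditions reduce to the compatibility of the piecewise linear functions $\varphi_i$ with the cone structure of $\Sigma$, which is exactly the condition encoded in saying that $\varphi_i$ represents a torus-invariant Cartier divisor. Everything else—local triviality of the projections, identification of fibers, and the assertion that the morphism $\P(\EE) \to X$ agrees with the classical one—follows from matching the affine charts dictated by each maximal cone of $\Sigma$ with the standard trivialisations of $\EE$ and $\P(\EE)$.
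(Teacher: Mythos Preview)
Your argument is correct, but it takes a different route from the paper's proof of this lemma. The paper argues abstractly: since every line bundle on a toric variety is of the form $\OO(D_i)$ for a torus-invariant divisor $D_i$, each $\LL_i$ carries a $T$-equivariant structure; extending the $T$-action on the total space of $\EE$ by the fiberwise $(\C^*)^{m+1}$-action gives a faithful action of $T\times(\C^*)^{m+1}$ with a dense orbit, so $\EE$ is toric. The induced action on $\P(\EE)$ has kernel the diagonal $\C^*\subset(\C^*)^{m+1}$, so $\P(\EE)$ is toric for the quotient torus. No fan is written down in the proof itself.

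Your approach instead builds the fans of $\EE$ and $\P(\EE)$ directly from the piecewise-linear data $\varphi_i$, and then reads off the toric structure and the fibration from the lattice projection. This is exactly the description the paper gives \emph{after} the lemma (graphing $\Psi=(\psi_0,\dots,\psi_m)$, adding the orthant $\R^{m+1}_{\ge 0}$, and then projecting by $(1,\dots,1)$ for $\P(\EE)$), so in effect you have merged the lemma with the subsequent discussion. The paper's argument is shorter and more conceptual for the bare existence statement, while yours gives the explicit combinatorics up front; since the paper needs that combinatorics anyway for the Cox-ring description, your ordering is a perfectly reasonable alternative. One small point: you should state explicitly at the outset that any line bundle on $X$ admits a torus-invariant representative (so that the $\varphi_i$ exist), as the paper does; your phrase ``since $\LL_i$ is a torus-equivariant line bundle'' reads as an assumption rather than a fact being invoked.
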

\begin{proof}
  For every line bundle $\LL_i$, there exists a torus invariant divisor $D_i$ such that $\LL_i$ is isomorphic to $\OO(D_i)$. Therefore, each line bundle $\LL_i$ on $X$ can be equipped with an equivariant structure, i.e. the action of $T$  on the total space of $\LL_i$ where $T$ is the torus acting on $X$. This makes the projection map equivariant.
    
    By fixing an equivariant structure on each of the line bundles $\LL_i$, we obtain a $T$-action on the total space of $\EE$. Finally, we extend the $T$-action on $\EE$ to the action of $T\times (\C^*)^{m+1}$ by making the second component act fiberwise in a natural way. This action is faithful and has an open-dense orbit in the total space of $\EE$. 

Moreover, the action of $T\times (\C^*)^{m+1}$ on $\EE$ descends to an action on $\P(\EE)$. The latter action has a one-dimensional kernel given by the diagonal subtorus in $(\C^*)^{m+1}$. Hence $\P(\EE)$ has the structure of a toric variety with respect to the factor torus
\begin{equation*}
        T\times \left((\C^*)^{m+1}/\C^*\cdot(1,\ldots,1)\right). \qedhere
\end{equation*}
\end{proof}

\begin{remark}
    Note that the divisor $D_i$ is defined up to addition of the principal divisor $div(u)$ of character $u\in \Z^n$ 
    or, equivalently, any two equivariant structures on $\LL_i$ differ by the action of a character of $T$. Therefore, the toric varieties defined by different choices of $D_i$ are isomorphic.
\end{remark}

We conclude by describing the defining fan of the projectivized total space 
$\P(\EE)$, when the defining line bundles of $\EE$ are torus equivariant. More precisely, for $0 \leq j \leq m$, we denote $D_j$ as a torus invariant divisor such that $\LL_i=\OO(D_i)$.
Each divisor $D_j$ defines a cone-wise linear function 
\[
\psi_i\colon\R^n = |\Sigma| \to \R.
\]
Let $\Psi=(\psi_0,\ldots,\psi_m)\colon |\Sigma| \to \R^{m+1}$ be the corresponding piece-wise linear map. 

Denote $\widetilde \Sigma\subset \R^n\times \R^{m+1}$ as the fan that is obtained as the graph of the function $\Psi$. That is, $\widetilde \Sigma$ consists of cones $\widetilde \sigma$ where
\[
\widetilde \sigma = \{(x,\Psi(x)) \,|\, x\in \sigma\}, \text{ for } \sigma\in \Sigma.
\]

We now abuse notation and denote $\R^n_{\geq 0}$ as the fan supported on the positive orthant, whose cones are all of the form $\sigma_J = \{x \in \R^n_{\geq 0}, \ \mid \ \forall j \in J: x_j = 0\}, \ J \subseteq [n].$
The fan defining the total space of $\EE$ consists of cones
\[
\widetilde\sigma + \tau \text{ for } \sigma\in \Sigma, \tau\in \R^n_{\geq 0} \times \{0\}.\]
Similarly, the fan $F$ defining the total space of $\EE$ with the zero section removed is given by
\[
\widetilde\sigma + \tau \text{ for } \sigma\in \Sigma, \tau\in  \partial \R^n_{\geq 0},
\]
where $ \partial\R^n_{\geq 0} =  \R^n_{\geq 0} \setminus \{ \R^n_{> 0} \}$ denotes the fan consisting of all cones in $\R^n_{\geq 0}$, except for the one of dimension $n$.
Finally, let $\mathcal S_0\subset \EE$ be the image of the zero section of $\EE$.  Now $\mathcal S_0$ is a torus invariant subset of $\EE$ and thus the natural projection of $\EE\setminus \mathcal S_0\to \P(\EE)$ is a toric morphism. 
On the level of fans, consider the projection 
\[
\tau \colon \R^n\times\R^{m+1} \longrightarrow \R^n\times\left(\R^{m+1}  /   \R\cdot(1,\ldots,1)\right)  \cong \R^n\times \R^{m} .
\]

Now let $S$ be the Cox ring of $X$, and let $S_\EE$ be the Cox ring of $\P(\EE)$.
By the above discussion, each ray of $\widetilde{\Sigma}$ is either of the form $\widetilde{\rho}$, where $\rho$ is a ray of $\Sigma$, or of the form $\{0\}\times e_i$, $i = 1, \dots, m+1$.
This splits the generators of $S_\EE$ over $\C$ into two groups.
The first group of generators $x_{\widetilde{\rho}}$ is bijective to the generators $x_\rho$ of $S$. We denote
members of the second group by
$\lambda_i = x_{ \{0\}\times e_{i+1}}$ and obtain the following proposition.
\begin{proposition}
\label{prop: Cox ring of P(E)}
    The Cox ring $S_\EE$ is isomorphic to 
    the free $S$-algebra $S[\lambda_0, \dots, \lambda_m]$.
\end{proposition}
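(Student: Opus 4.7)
The plan is to invoke the standard fact that the Cox ring of a complete toric variety whose fan has no torus factors is the polynomial ring $\C[x_\varrho : \varrho \in \Sigma_Y(1)]$ with one generator per ray of the defining fan. It therefore suffices to enumerate the rays of the fan $\Sigma_\EE := \pi(F)$ describing $\P(\EE)$ in the discussion preceding the statement, and show they split canonically into a set in bijection with $\Sigma(1)$ together with $m+1$ extra rays which will furnish the variables $\lambda_i$.

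First I would enumerate the rays of the fan $F \subset \R^n \times \R^{m+1}$. From its description as $\widetilde{\sigma} + \tau$ with $\sigma \in \Sigma$ and $\tau$ a face of $\partial\R^{m+1}_{\geq 0}$, the rays of $F$ are of two disjoint types: the graph rays $\widetilde{\rho} := \{(y, \Psi(y)) : y \in \rho\}$ for $\rho \in \Sigma(1)$, and the fiber rays $\R_{\geq 0}(0, e_i)$ for $i = 1, \ldots, m+1$. Applying the projection
\[
\pi \colon \R^n \times \R^{m+1} \to \R^n \times \bigl(\R^{m+1}/\R(1,\ldots,1)\bigr),
\]
the graph rays remain distinct rays since $\ker \pi = \{0\} \times \R(1,\ldots,1)$ meets every $\widetilde{\rho}$ trivially (for $\rho \neq 0$), while the images of $e_1, \ldots, e_{m+1}$ in the quotient span it and are pairwise non-proportional, so the fiber rays likewise project to $m+1$ pairwise distinct rays of $\Sigma_\EE$. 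Denoting the latter ray generators by $\lambda_0, \ldots, \lambda_m$, we obtain a bijection $\Sigma_\EE(1) \longleftrightarrow \Sigma(1) \sqcup \{\lambda_0, \ldots, \lambda_m\}$.

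Applying the defining formula for the Cox ring to $\Sigma_\EE$ and using this bijection identifies $S_\EE$ with $\C[x_\rho : \rho \in \Sigma(1)][\lambda_0, \ldots, \lambda_m] = S[\lambda_0, \ldots, \lambda_m]$, which is the desired isomorphism of $S$-algebras. The most delicate step is justifying that the projection $\pi$ correctly restricts to the rays of $F$ without collapsing any of them or creating new ones from higher-dimensional cones of $F$; this is handled by the kernel computation above together with the observation that the kernel line $\R(1,\ldots,1)$ lies in the relative interior of the positive orthant $\R^{m+1}_{\geq 0}$, so it is disjoint from every proper boundary face of it and therefore each maximal cone of $F$ maps bijectively to a full-dimensional cone of $\Sigma_\EE$ in the quotient, preserving ray incidences.
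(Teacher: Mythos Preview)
Your proposal is correct and follows the same approach as the paper: the paper states the proposition as an immediate consequence of the preceding paragraph, which enumerates the rays of the fan of $\P(\EE)$ as exactly the lifts $\widetilde{\rho}$ of rays $\rho\in\Sigma(1)$ together with the $m+1$ fiber rays $\{0\}\times e_i$, and then identifies the Cox ring as the polynomial ring on these generators. Your write-up simply fills in the verification that the projection $\pi$ neither collapses nor creates rays, a detail the paper leaves implicit.
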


\begin{remark}
    In the following, we are often in the situation where 
    $f$ is a global section of a torus invariant line bundle $\OO_X(D)$ on $X$.
    Now $\widetilde{f}$ denotes an element of $S \subseteq S_\EE$.
    At the same time, $f$ can be identified with a section of the bundle $\pi^*\OO_X(D)$ on $\P(\EE)$, where $\pi: \P(\EE) \longrightarrow X$ is the natural projection. When homogenizing, this gives rise to another element $\widetilde{f} \in S_\EE$.
    Direct computation shows that there is no need for disambiguation since both expressions are equal.
\end{remark}

\subsection{Constructing critical point equations in Cox rings}

\label{subsection: homogeneous critical point equations}
We start by fixing some notation and definitions.
For the rest of this section, let $\mathbf{F} =(f_0,\ldots,f_m)$ be a generic sparse system of polynomials 
in $\mathbb{C}[y_1,\ldots,y_n]$
with admissible support $\AA = (\AA_0,\ldots, \AA_m)$. 
Furthermore, $X$ denotes a toric variety that is appropriate for $\AA$, with fan $\Sigma$.

\begin{remark}
    Note that, since $\Sigma$ contains the positive orthant $\R_{\geq 0}^n$ as a cone, there is a distinct copy of the affine space $\C^n$ contained in $X$.
    For clarity, 
    we denote the affine variables in the coordinate ring $\mathbb{C}[y_1,\ldots,y_n]$ of $\C^n$ as $y_1,\ldots,y_n$, while the generators of the Cox ring $S$ of $X$ as $x_\rho$.
    By slight abuse of notation, we will denote the element $x_{e_j}$ in $S$ by $x_j$ for each $j = 1 \dots, n$.
\end{remark}

For every $i = 0, \dots, m$ let $\LL_i = \OO(-D_{f_i})$ denote the dual line bundle associated with $D_{f_i}$. Here $D_{f_i}$ is the torus invariant Weyl divisor on $X$, corresponding to the Newton polytope $\newt(f_i)$:
\begin{equation}
\label{eq: Weyl divisor Dfi}
D_{f_i} =\sum_{\rho \in \Sigma(1)} a_{\rho,i} D_\rho,\quad \text{where}\quad a_{\rho,i} = - \min \{ \langle m, \rho \rangle \ : \  m \in \newt(f_i)  \}.    
\end{equation}
We denote by $\EE$ the vector bundle 
$\EE = \LL_0 \oplus \cdots \oplus \LL_m$ with projectivized total space
 \[\P(\EE) = \{ (x, [\lambda]) \mid \ x\in X, \  [\lambda] \in \P \left(\EE(x)\right)  \}  .\]

The rest of this section is devoted to giving two different, but related, systems of homogeneous critical point equations for \eqref{POP}, one in the Cox ring $S$ of $X$, and one in the Cox ring $S_\EE$ of $\P(\EE)$.
On the one hand, critical points are characterized by the vanishing of
the Lagrange system $\mathbf{L}_{\mathrm{F}}$.
It describes the intersection of 
the
incidence variety
\[
Z^\circ := \{ (x,[\lambda]) \in \C^n\times \P^n :  \ \left( \nabla f_0 \mid \dots \mid \nabla f_m  \right) \lambda = 0   \}
\]
with the vanishing locus of $f_1, \dots, f_m$.
On the other hand, 
critical points are
characterized by the Jacobian $\left( \nabla f_0 \mid \dots \mid \nabla f_m  \right)$ dropping rank.
They form the intersection $V^\circ \cap W^\circ$, where
$V^\circ  \coloneqq \VV \left( f_1, \dots, f_m \right) \subseteq \C^n$, and
$W^\circ$ is the determinantal variety
\[W^\circ := \{ x \in \C^n \ : \  \rank \left( \nabla f_0 \mid \dots \mid \nabla f_m  \right) \leq m \}.\]

We proceed by giving homogeneous equations for
$V^\circ, W^\circ$ and $Z^\circ$ in $S$ and $S_\EE$ respectively.
Every polynomial $f_i$ is a global section of the line bundle $\OO_X(D_{f_i})$,
and its homogeneous form can be written as
\[
\widetilde{f_i} = \sum_{m \in \newt(f_i) \cap \Z^n}   c_{m,i} \prod_{\rho \in \Sigma(1)}  
x_\rho^{\langle m, \rho \rangle + a_{\rho,i}}.
\]
Here we homogenize $f_i$ as 
in \eqref{eq: homogenization} in  \Cref{sec: toric}. In particular, $\widetilde{f}_i$ is defined by our choice of line bundle $\OO_X(D_{f_i})$.

We denote $V$ as the closure of $V^\circ = \VV \left( f_1, \dots, f_m \right)$ in $X$.    
Observe that by genericity of $\mathbf{F}$, $V$ is equal to the vanishing locus of the homogeneous equations
$
V = \VV \left( \widetilde{f}_1, \dots \widetilde{f}_m \right).
$

We denote $M$ as a homogeneous version of the Jacobian matrix:
\[
M = \left( \widetilde{\nabla} \widetilde{f}_0  \mid \dots \mid  \widetilde{\nabla} \widetilde{f}_m \right).
\]
Here $\widetilde{\nabla}$ denotes the vector
$(\frac{\partial}{\partial x_1}, \dots, \frac{\partial}{\partial x_n})^T$. We use the notation
 $\widetilde{\nabla}$ instead of $\nabla$ to indicate that we differentiate with respect to coordinates in the Cox ring.
So $M$ has columns
$ \left( \frac{\partial}{\partial x_1} \widetilde{f}_i, \dots, \frac{\partial}{\partial x_n} \widetilde{f}_i  \right)^T$.
We define
\[
W \coloneqq \{ x \in X \ : \  \rank (M) \leq m  \}
\]
to be the vanishing locus of the maximal minors of $M$.
Furthermore, we let 
\[
Z \coloneqq \{ (x, [\lambda]) \in \P(\EE) \ : \  M(x) \lambda = 0  \}
\]
be the associated incidence variety, contained in the projectivized total space $\P(\EE)$.

The rest of this Section is devoted to proving Lemma \ref{lemma: homogeneous criticality equations are correct}.
It shows that the homogeneous critical point equations agree with the affine ones when restricted to affine space $\C^n$.

We need an observation about differentiating homogeneous polynomials.
Let $D$ be a torus invariant Weyl divisor on $X$ (or on $\P(\EE)$), and $f$ a global section of $\OO_X(D)$.
Observe that for $j = 1, \dots, n$ the Newton polytope of the differential $y_j \frac{\partial}{\partial y_j} f $ is contained in the rational polytope $e_j + \partial_j \newt(f)$, and
in particular $ \frac{\partial}{\partial y_j} f $ is
a global section of the sheaf
$\frac{1}{y_j}\OO_X(D -  D_{ e_j})$ (or of the sheaf $\frac{1}{y_j}\OO_{\P(\EE)}(D -  D_{\widetilde{e}_j})$). 
Direct computation shows the following proposition.

\begin{proposition}
\label{prop: homogenization commutes with differentiation}
Homogenization and differentiation commute:
$ \widetilde{\frac{\partial}{\partial y_j} f  }  = 
\frac{\partial}{\partial x_j} \widetilde{f}$. 
\end{proposition}

We denote $\widetilde{ \Phi}_F $ as the homogenization of the Lagrangian, $\Phi$, in the Cox ring $S_\EE$.
This makes sense since $\P(\EE)$ defines a global section of the sheaf $\OO_{\P(\EE)}(D_{\Phi_F})$,
associated with the Cayley polytope
$\newt(\Phi_F) = \cay(\newt(f_0), \dots, \newt(f_m))$.
By \Cref{prop: homogenization commutes with differentiation}, each of the defining equations
\[
0 = 
\left( M(x) \lambda \right)_j = 
\lambda_0 \frac{\partial}{\partial x_j} \widetilde{f}_0+ \cdots +
\lambda_m \frac{\partial}{\partial x_j} 
\widetilde{f}_m = \frac{\partial}{\partial x_j} \widetilde {\Phi}_F(\lambda, x)
\]
of $Z$
is equal to the homogenization
$\widetilde{\ell_j}$
of $\ell_j =  \widetilde { \frac{\partial}{\partial y_j} \Phi_F}(\lambda, x)$.
Here $\widetilde{\ell_j}$ is considered as a global section of the sheaf
$\OO_{\P(\EE)}(D_{\Phi_F} - D_{ \widetilde{e}_j})$.

 We denote
    $\widetilde{\mathbf{L}}_F = (\widetilde{f}_1,\ldots,\widetilde{f}_m,\widetilde{\ell}_1,\ldots,\widetilde{\ell}_n)$ as the  homogenized Lagrange system.
On one hand, we observed above that $Z$ is equal to the vanishing locus of $\widetilde{\ell}_1,\ldots,\widetilde{\ell}_n$.
On the other hand, the vanishing locus of $\widetilde{f}_1,\ldots,\widetilde{f}_m$ in $\P(\EE)$ is the preimage $\pi^{-1}(V)$
of the vanishing locus $V$ of $ \widetilde{f}_1,\ldots,\widetilde{f}_m $ in $X$.
We obtain the following Proposition.

\begin{proposition}
    The vanishing locus of \,$\widetilde{\mathbf{L}}_F$ in $\P(\EE)$ is the intersection
    $Z \cap \pi^{-1}(V)$.
\end{proposition}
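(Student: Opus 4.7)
The plan is essentially to combine the two observations that immediately precede the proposition and note that they cover the two halves of the claim. By definition, the vanishing locus of $\widetilde{\mathbf{L}}_F$ in $\P(\EE)$ is the simultaneous vanishing locus of its constituents, and since these equations split into two natural groups, the vanishing locus decomposes as an intersection of the vanishing loci of each group.

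First I would observe that
\[
\VV\bigl(\widetilde{\mathbf{L}}_F\bigr) \;=\; \VV\bigl(\widetilde{f}_1,\dots,\widetilde{f}_m\bigr) \cap \VV\bigl(\widetilde{\ell}_1,\dots,\widetilde{\ell}_n\bigr)
\]
inside $\P(\EE)$. Then I would invoke the first observation: the equations $\widetilde{f}_i$, viewed as sections of $\pi^*\OO_X(D_{f_i})$ on $\P(\EE)$ (using the disambiguation remark, which ensures that the two possible homogenisations agree), vanish precisely on the preimage $\pi^{-1}(V)$, where $V = \VV(\widetilde{f}_1,\dots,\widetilde{f}_m) \subseteq X$. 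This is immediate because the $\widetilde{f}_i$ depend only on the base coordinates $x_\rho$ and not on the fibre coordinates $\lambda_0,\dots,\lambda_m$, so their zero locus in $\P(\EE)$ is fibred over their zero locus in $X$.

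Next I would invoke the second observation: the identification
\[
(M(x)\lambda)_j \;=\; \lambda_0 \tfrac{\partial}{\partial x_j}\widetilde{f}_0 + \cdots + \lambda_m \tfrac{\partial}{\partial x_j}\widetilde{f}_m \;=\; \widetilde{\ell}_j
\]
established using \Cref{prop: homogenisation commutes with differentiation} shows that $Z = \VV(\widetilde{\ell}_1,\dots,\widetilde{\ell}_n)$. Combining the two gives $\VV(\widetilde{\mathbf{L}}_F) = \pi^{-1}(V) \cap Z$, as desired.

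Since both halves of the argument are already spelled out in the paragraphs preceding the proposition, no step presents a real obstacle; the proof is really just a bookkeeping step that records the combined content of the two prior observations, and amounts to a single sentence invoking them.
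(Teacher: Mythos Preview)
Your proposal is correct and matches the paper's approach exactly: the paper does not give a separate proof but simply states the proposition as the combination of the two observations in the preceding paragraph (that $Z$ is the vanishing locus of $\widetilde{\ell}_1,\dots,\widetilde{\ell}_n$, and that the vanishing locus of $\widetilde{f}_1,\dots,\widetilde{f}_m$ in $\P(\EE)$ is $\pi^{-1}(V)$). Your write-up is, if anything, slightly more detailed than the paper's one-line ``We obtain the following Proposition.''
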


The following lemma shows that the homogeneous critical point equations introduced in this chapter restrict on $\C^n$
to the expected affine critical point equations.
\begin{lemma}
    \label{lemma: homogeneous criticality equations are correct}
    The following three equalities hold:
    \begin{align}
    &V \cap \C^n =  V^\circ, \quad  \ W \cap \C^n =  W^\circ \label{char1} \\
    &Z \cap 
    \pi^{-1}(\C^n)
    =  Z^\circ, \label{char2}
    \end{align}
  where the intersection in \eqref{char1} is in $X$ and the intersection in \eqref{char2} is in $\P(\EE)$.  
\end{lemma}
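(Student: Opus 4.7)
The plan is to identify $\C^n$ with the distinguished affine chart of $X$ associated to the maximal cone $\R^n_{\geq 0} \in \Sigma$, and to realise the restriction of Cox-ring data concretely. On this chart the restriction map $S \to \C[X_1,\dots,X_n]$ is implemented by setting $x_\rho = 1$ for every ray $\rho \notin \{e_1,\dots,e_n\}$ and identifying $x_{e_j}$ with $X_j$. The analogous statement on $\P(\EE)$ requires one extra ingredient: since each $\LL_i = \OO_X(-D_{f_i})$ is torus-invariant and $\R^n_{\geq 0}$ is a cone in the normal fan of $\newt(f_i)$, the line bundle $\LL_i$ admits a canonical trivialisation over $\C^n$. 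Combining these trivialisations yields $\P(\EE)|_{\C^n} \cong \C^n \times \P^m$ and a corresponding restriction map $S_\EE \to \C[X_1,\dots,X_n][\lambda_0,\dots,\lambda_m]$ obtained again by setting $x_\rho = 1$ for rays outside the positive orthant.

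The heart of the argument is two restriction identities. Writing
\[
\widetilde{f}_i \;=\; \sum_{m \in \AA_i} c_{m,i} \prod_{\rho \in \Sigma(1)} x_\rho^{\langle m,\rho\rangle + a_{\rho,i}},
\]
part~(2) of \Cref{def: admissible point configuration} forces $a_{e_j,i} = -\min\{m_j : m \in \AA_i\} = 0$ for every $j$, so each summand restricts to $c_{m,i} X^m$ and hence $\widetilde{f}_i|_{\C^n} = f_i$. Second, by a direct computation (or via \Cref{prop: homogenisation commutes with differentiation}) one checks
\[
\left.\frac{\partial \widetilde{f}_i}{\partial x_j}\right|_{\C^n} \;=\; \frac{\partial f_i}{\partial X_j}.
\]

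With these identities in hand the three equalities follow mechanically. The defining generators of $V$ restrict to those of $V^\circ$, giving $V \cap \C^n = V^\circ$. The matrix $M$ restricts entrywise to the affine Jacobian $(\nabla f_0 \mid \dots \mid \nabla f_m)$, so the vanishing loci of its maximal minors match and $W \cap \C^n = W^\circ$. Finally, under the trivialisation $\P(\EE)|_{\C^n} \cong \C^n \times \P^m$, the defining equation $M(x)\lambda = 0$ of $Z$ restricts to $(\nabla f_0 \mid \dots \mid \nabla f_m)(x)\cdot \lambda = 0$, which is precisely the defining equation of $Z^\circ$, establishing \eqref{char2}.

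The only genuinely delicate point is ensuring that the Cox-ring derivative $\partial \widetilde{f}_i / \partial x_j$ is the correct sheaf-theoretic lift of $\partial f_i / \partial X_j$, i.e. that it is a global section of the twisted sheaf $\tfrac{1}{X_j}\OO_X(D_{f_i} - D_{e_j})$. This is precisely the content of \Cref{prop: homogenisation commutes with differentiation}, after which the reduction to the affine chart is a formality.
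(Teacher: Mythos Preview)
Your proof is correct and takes essentially the same approach as the paper, pivoting on \Cref{prop: homogenisation commutes with differentiation} to identify the entries of $M$ (and the $\widetilde{\ell}_j$) with homogenizations of the affine Jacobian entries (and of the $\ell_j$). You are simply more explicit than the paper about the restriction map to the affine chart, the computation $a_{e_j,i}=0$ from admissibility, and the trivialisation of $\P(\EE)$ over $\C^n$; the paper dispatches $V\cap\C^n=V^\circ$ in one line by invoking the definition of $V$ as the closure of $V^\circ$, and handles the other two equalities in a sentence each.
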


\begin{proof}
The first of the equalities is clear, by the definition of $V$ as the closure of 
$V^\circ$.
To see the second equality,
we prove that the entries of $M$ are homogenizations of the entries of the Jacobian $\left( \nabla {f}_0, \dots, \nabla {f}_m \right)$.
This is a direct consequence of \Cref{prop: homogenization commutes with differentiation}, since for every $i=0, \dots, m$ and $j = 1, \dots, n$,
$ \widetilde{\frac{\partial}{\partial y_j} f_i  }  = 
\frac{\partial}{\partial x_j} \widetilde{f}_i$.
The third equality is analogous, since homogenizing the defining equations ${\ell}_1, \dots {\ell}_n$ of $Z^\circ$ yields the defining equations $\widetilde{\ell}_1, \dots \widetilde{\ell}_n$ of $Z$.
\end{proof}

We close this section with the following generalization of Euler's equation.
\begin{proposition}
\label{prop:Euler equation}
Let $D = \sum_{\rho \in \Sigma(1)} a_\rho D_\rho$ be a torus invariant Weyl divisor,
$f$ a global section of $\mathcal{O}_X (D)$ and $\tau \in \Sigma(1)$ a ray.
Then the generalized Euler equation
\begin{equation}
\label{eq:Euler formula}
-x_\tau \frac{\partial}{\partial x_\tau} \widetilde{f} + \tau_1 x_1\frac{\partial}{\partial x_1 } \widetilde{f} + \cdots + \tau_n x_n\frac{\partial}{\partial x_n } \widetilde{f}  = -a_\tau \widetilde{f}
\end{equation}
holds for the homogenization $\widetilde{f} \in S$ of $f$.
\end{proposition}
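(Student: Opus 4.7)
The plan is to verify the identity by a direct monomial-by-monomial calculation in the Cox ring $S$. The only inputs needed are the explicit expansion
\[
\widetilde{f} = \sum_{m \in \newt(f) \cap \Z^n} c_m \prod_{\rho \in \Sigma(1)} x_\rho^{\langle m, \rho \rangle + a_\rho}
\]
from \eqref{eq: homogenization}, and the elementary rule $x_\rho \tfrac{\partial}{\partial x_\rho}(x^b) = b_\rho\, x^b$ for any monomial $x^b \in S$. Because $\Sigma$ contains the positive orthant, each standard basis vector $e_j$ is itself a ray of $\Sigma$, so the variables $x_j = x_{e_j}$ really are among the Cox generators and the rule applies to the $x_j$ on the same footing as to $x_\tau$.

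Applying the rule to a single monomial $x^{b(m)} := \prod_\rho x_\rho^{\langle m, \rho \rangle + a_\rho}$ gives
\[
x_\tau \tfrac{\partial}{\partial x_\tau} x^{b(m)} = (\langle m, \tau \rangle + a_\tau)\, x^{b(m)}, \qquad
x_j \tfrac{\partial}{\partial x_j} x^{b(m)} = (m_j + a_{e_j})\, x^{b(m)}.
\]
Substituting these into the left-hand side of \eqref{eq:Euler formula} and using $\sum_{j=1}^n \tau_j m_j = \langle \tau, m \rangle = \langle m, \tau\rangle$, the $m$-dependent contributions cancel, leaving the $m$-independent scalar $-a_\tau + \sum_j \tau_j a_{e_j}$ times $x^{b(m)}$. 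Because this scalar does not depend on $m$, summing over $m \in \newt(f) \cap \Z^n$ and using linearity gives
\[
\Bigl(-x_\tau \tfrac{\partial}{\partial x_\tau} + \sum_{j=1}^n \tau_j x_j \tfrac{\partial}{\partial x_j}\Bigr) \widetilde{f} = \Bigl(-a_\tau + \sum_{j=1}^n \tau_j a_{e_j}\Bigr)\widetilde{f}.
\]

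The last step is to collapse $-a_\tau + \sum_j \tau_j a_{e_j}$ down to the asserted $-a_\tau$, which is where the admissibility hypothesis of this section intervenes. Admissibility forces every $\newt(f_i)$ (and, via the Cayley construction, $\newt(\Phi_F)$) to meet every coordinate hyperplane, so $\min\{\langle m, e_j\rangle : m \in \newt(f)\} = 0$ for each $j$, giving $a_{e_j} = 0$ in every divisor to which the identity is applied. The only real obstacle I anticipate is this accounting step: one must check that the divisor $D$ in question genuinely satisfies $a_{e_j} = 0$, but under the standing admissibility assumption this is automatic.
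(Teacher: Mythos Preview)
Your approach is exactly the paper's: a direct monomial-by-monomial computation in the Cox ring using $x_\rho\,\partial_{x_\rho}(x^{b}) = b_\rho\, x^{b}$. You are in fact more careful than the printed proof --- the paper's displayed computation drops the $\tau_j$ coefficients (a typo) and silently collapses $-a_\tau + \sum_j \tau_j a_{e_j}$ to $-a_\tau$, whereas you make this step explicit and correctly trace the needed vanishing $a_{e_j}=0$ back to the standing admissibility hypothesis.
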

\begin{proof}
Equation \eqref{eq: homogenization}
reads
$
\widetilde{f} = \sum_{ m \in \Z^n  }
c_m\prod_{\rho \in \Sigma(1)} x_\rho^{\langle m, \rho \rangle + a_\rho}
$
and we have
\begin{align*}
&
-x_\tau \frac{\partial}{\partial x_\tau} \widetilde{f} + \tau_1 x_1\frac{\partial}{\partial x_1 } \widetilde{f} + \cdots + \tau_n x_n\frac{\partial}{\partial x_n } \widetilde{f} \\
=
&
\sum_{ m \in \Z^n  }
c_m
\left(-x_\tau \frac{\partial}{\partial x_\tau}
+ \tau_1 x_1\frac{\partial}{\partial x_1 }
+ \cdots+
x_n\frac{\partial}{\partial x_n }
\right)
\prod_{\rho \in \Sigma(1)}
x_\rho^{\langle m, \rho \rangle + a_\rho}
\\
=
&
\sum_{ m \in \Z^n  }
c_m
( -\langle m, \tau \rangle - a_\tau 
+ m_1 + a_{e_1} + \cdots 
+ m_n + a_{e_n}
)
\prod_{\rho \in \Sigma(1)}
x_\rho^{\langle m, \rho \rangle + a_\rho}
\\
=
&
\sum_{ m \in \Z^n  }
c_m
(- a_\tau
)
\prod_{\rho \in \Sigma(1)}
x_\rho^{\langle m, \rho \rangle + a_\rho}
= - a_\tau \widetilde{f}. \qedhere
\end{align*}
\end{proof}

\section{Computing the number of critical points}
\label{sec: secion_with_proofs_for_thm_A_and_B}

In this section we prove \Cref{thm: A} and \Cref{thm: C}, relying on the results from \Cref{sec: homeq}.
In \Cref{lemma: homogeneous criticality equations are correct} we characterized critical points of \eqref{POP}
in two ways. First, in \eqref{char1} we characterized the critical points of \eqref{POP} as an intersection $V \cap W$ in $X$.
Second,  we characterized the critical points of \eqref{POP} in \eqref{char2} by means of homogenized Lagrange equations
$\widetilde{\mathbf{L}}_F$ in the Cox ring of $\P(\EE)$. 
In this section, we show that all intersections are transversal and happen in $\C^n$.
This characterizes the number of critical points
as products of cohomology classes.
In the case of \Cref{thm: A} this product is a mixed volume.
The proof of \Cref{thm: C} relies on a characterization of $[W]$
as a Porteous class.

Before diving into the proofs, we wish to highlight where the assumptions of our main theorem are used.
We  use the assumption that $\AA$ be admissible in the proof of
\Cref{prop: gradient does not vanish uniformly}.
For the proof of \Cref{prop: rank M is m+1} we need that the closure $\VV$ of the constraint locus $\VV(f_1, \dots, f_m)$ is smooth.
This is guaranteed by the stronger assumption that  $\AA$ is strongly admissible in the proof of \Cref{thm: A} and \Cref{thm: B}.
For \Cref{thm: C} we assume $X$ is smooth in order to employ Porteous' formula.

\subsection{Preliminary results}
We start by proving some technical statements that are needed for the desired transversality results.
For the rest of the section we again fix the assumptions from \Cref{subsection: homogeneous critical point equations},
and assume that $V$ does not intersect the singular locus of $X$.
It follows from
Bertini's Theorem, that $V$ is smooth, which is the motivation for 
\Cref{def: strongly admissible point configuration} and 
\Cref{def: appropriate}.

For the next proposition, we use the following notion. Similar to the projection $\C^{n+1}\setminus\{ 0\} \to \P^n$, there exists the open subset $U_\Sigma \subseteq \C^{\Sigma(1)}$ with a projection $\tau:U_\Sigma \to X$. For a subvariety $Z$  of $X$, we define the cone $C(Z)$ over $Z$ to be the closure of the preimage $\tau^{-1}(Z)$ in $\C^{\Sigma(1)}$. The intersection $C(Z)\cap U_\Sigma$ forms a torus principal bundle over $Z$. In particular, $C(Z)\cap U_\Sigma$ is smooth if $Z$ is smooth.

\begin{proposition}
\label{prop: rank M is m+1}
The matrices
$\left( \widetilde{\nabla} \widetilde{f}_0, \dots, \widetilde{\nabla} \widetilde{f}_m \right)$ and
$\left( \widetilde{\nabla} \widetilde{f}_1, \dots, \widetilde{\nabla} \widetilde{f}_m \right)$ have full ranks $m+1$ and $m$ everywhere on $\VV \left( \widetilde{f}_0, \dots \widetilde{f}_m \right)$ and $\VV \left( \widetilde{f}_1, \dots \widetilde{f}_m \right)$ respectively.
\end{proposition}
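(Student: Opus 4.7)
The plan is to derive both rank statements by combining Bertini's theorem (applied to the complete intersection cut out by $\widetilde f_1, \ldots, \widetilde f_m$ or $\widetilde f_0, \ldots, \widetilde f_m$ in $U_\Sigma$) with the generalized Euler identity of \Cref{prop:Euler equation}, using the latter to transfer full rank from the complete Cox-coordinate Jacobian down to its sub-matrix $M$ whose rows are indexed only by the standard basis rays $e_1, \ldots, e_n$. I will describe the argument for the rank-$m$ claim; the rank-$(m+1)$ claim is analogous with $\mathcal V(\widetilde f_1, \ldots, \widetilde f_m)$ replaced by $\mathcal V(\widetilde f_0, \ldots, \widetilde f_m)$.

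First, by Bertini applied to the generic system in the Cox coordinates, and using the standing assumption that $V$ avoids the singular locus of $X$, the variety $\mathcal V(\widetilde f_1, \ldots, \widetilde f_m) \cap U_\Sigma$ is a smooth complete intersection of codimension $m$ in $U_\Sigma$. Equivalently, the full Jacobian $N = (\partial_{x_\rho} \widetilde f_i)_{\rho \in \Sigma(1),\, i}$ has column rank $m$ everywhere on this locus. Suppose for contradiction that at some point $\hat p$ of this variety a nonzero combination $c = (c_1, \ldots, c_m)$ annihilates the columns of $M$, i.e.\ $\sum_i c_i \widetilde \nabla \widetilde f_i(\hat p) = 0$. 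Setting $g = \sum_i c_i \widetilde f_i$, one has $g(\hat p) = 0$ and $\partial_{x_j} g(\hat p) = 0$ for $j = 1, \ldots, n$. For every non-standard ray $\tau$, linearly combining the Euler equations gives $x_\tau(\hat p)\,\partial_\tau g(\hat p) = \sum_j \tau_j x_j(\hat p)\,\partial_j g(\hat p) - \sum_i c_i\, a_{\tau,i}\,\widetilde f_i(\hat p)$, whose right-hand side vanishes at $\hat p$. Hence $\partial_\tau g(\hat p) = 0$ as soon as $x_\tau(\hat p) \neq 0$, which in particular holds for \emph{all} $\tau$ whenever $\hat p$ lifts a point in the affine chart $\mathbb{C}^n \subset X$; at such points one obtains $N(\hat p)\,c = 0$ and hence $c = 0$ by smoothness.

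The main obstacle will be handling the remaining case when $x_\tau(\hat p) = 0$ for some non-standard $\tau$, where the Euler identity for $\tau$ degenerates into a mere linear relation among the standard rows of $M$ and fails to pin down $\partial_\tau g(\hat p)$. To treat these boundary points I would pass to a maximal smooth cone $\sigma \in \Sigma$ containing $\sigma_0 = \mathrm{Cone}\{\rho : x_\rho(\hat p) = 0\}$; by smoothness of $X$ at $p$, the ray generators of $\sigma$ form a $\mathbb Z$-basis of $N$ and furnish local coordinates on $U_\sigma \cong \mathbb{C}^n$ near $p$. Smoothness of $V$ in this chart gives that the Jacobian in these local coordinates has rank $m$; the Euler identities for rays $\rho_i \in \sigma(1) \setminus \sigma_0(1)$ (where $x_{\rho_i}(\hat p) \neq 0$) express the corresponding partials as linear combinations of the standard partials, while the Euler identities for $\rho \in \sigma_0(1)$ yield only linear dependencies among standard rows which are compatible with $M(\hat p)\,c = 0$. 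Tracking these relations carefully reduces the rank statement for $M$ to that for the local Jacobian and closes the argument by forcing $N(\hat p)\,c = 0$, hence $c = 0$. This local chart analysis is the delicate technical core of the proof and makes essential use of the strong admissibility hypothesis on $\AA$ and the appropriateness of $X$.
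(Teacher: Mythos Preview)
Your overall strategy --- use the generalized Euler identity to show that rows of the full Cox-Jacobian $N$ indexed by rays $\rho$ with $x_\rho(\hat p)\neq 0$ lie in the row span of $M$ --- is exactly the paper's, and your treatment of points $p\in\C^n$ is correct.

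The gap is in the boundary case. You cannot force $N(\hat p)\,c=0$: for $\rho\in\sigma_0(1)$ the Euler identity degenerates to $0=0$ and says nothing about $\partial_\rho g(\hat p)$. Passing to a maximal smooth cone $\sigma\supseteq\sigma_0$ does not repair this, because the local Jacobian on $U_\sigma$ still contains rows indexed by $\rho\in\sigma_0(1)$, and only the rows with $\rho\in\sigma(1)\setminus\sigma_0(1)$ can be rewritten in terms of the rows of $M$ via Euler. Smoothness of $V$ guarantees that the \emph{full} local Jacobian has rank $m$, but not that this particular submatrix does; so the reduction you describe from ``rank of $M$'' to ``rank of the local Jacobian'' does not go through, and the claimed conclusion $N(\hat p)\,c=0$ is unsupported.

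What the paper supplies here is a different and sharper input: for generic coefficients, $V$ meets every torus orbit $O(\sigma_0)$ transversally (Khovanskii). This is strictly stronger than global smoothness of $V$ and says precisely that the submatrix $\widetilde M_{\sigma_0}$ of $N$ consisting of the rows $\rho\notin\sigma_0$ \emph{already} has full rank. Since every such row has $x_\rho(\hat p)\neq 0$, the Euler identity (together with $\widetilde f_i(\hat p)=0$) places it in the row span of $M$, so $M$ inherits full rank directly from $\widetilde M_{\sigma_0}$. No maximal cone or local chart is needed, and the argument is uniform over all torus orbits. As a side remark, strong admissibility plays no role here; admissibility of $\AA$ together with appropriateness of $X$ suffices.
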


\begin{proof}
The proof for the second matrix
is analogous, so
we only present the proof~for
\[
M = \left( \widetilde{\nabla} \widetilde{f}_0, \dots, \widetilde{\nabla} \widetilde{f}_m \right).
\]

Let $x \in \VV \left( \widetilde{f}_0, \dots \widetilde{f}_m \right)$ be arbitrary and $\sigma \in \Sigma$ the unique cone such that $x$ is contained in the torus orbit $O(\sigma)$.
Let $\widetilde{M}$ denote the matrix with rows
\begin{equation}
\label{eq: row of Msigma}
\left(
\frac{\partial}{\partial x_\rho} \widetilde{f}_0, \dots, 
\frac{\partial}{\partial x_\rho} 
\widetilde{f}_m
\right)
\end{equation}
for each ray $\rho$ in $\Sigma(1)$. 
The left kernel of $\widetilde{M}$
is
the tangent space of the cone
$C \left( \VV \left( \widetilde{f}_0, \dots ,\widetilde{f}_m \right) \right)$ in $\C^{\Sigma(1)}
$.
The Jacobian $\widetilde{M}_\sigma $, of the cone over the variety 
\[O(\sigma) \cap \VV \left( \widetilde{f}_0, \dots, \widetilde{f}_m \right)\]
is a submatrix of $\widetilde{M}$.
Its rows correspond to those rays
$\rho$ that are not contained in $\sigma$.
By our assumption at the beginning of this section, $V$ is disjoint from the singular locus of $X$, and we can apply Bertini's Theorem
to show that $\VV \left( \widetilde{f}_0, \dots , \widetilde{f}_m \right)$ is a smooth variety.
Furthermore, the intersection
$O(\sigma) \cap \VV \left( \widetilde{f}_0, \dots , \widetilde{f}_m \right)$ is transversal by \cite{khovanskii1978newton}, so $\widetilde{M}_\sigma$ is of full rank $m+1$ at $x$.
We now finish the proof by showing that the row span of $\widetilde{M}_\sigma$ is contained in the row span of $M$.
Let $\rho$ be any ray that is not contained in $\sigma$.
To show that the corresponding row
\eqref{eq: row of Msigma}
of $\widetilde{M}_\sigma$
is contained in the
row span of $M$,
we apply \Cref{prop:Euler equation} to all functions $\widetilde{f}_0, \dots, \widetilde{f}_m$.
The right side of equation $\eqref{eq:Euler formula}$ vanishes,
and we obtain
\[
\begin{pmatrix} \rho_1 x_1 \\ \vdots\\
\rho_n x_n
\end{pmatrix} ^T
M
 = x_\rho
 \begin{pmatrix}  \frac{\partial}{\partial x_\rho } \widetilde{f}_0 \\ \vdots\\
 \frac{\partial}{\partial x_\rho } \widetilde{f}_m
\end{pmatrix}^T. \qedhere
\]
\end{proof}

\begin{proposition}
    \label{prop: gradient does not vanish uniformly}
 The gradient $\widetilde{\nabla} \widetilde{f}_0 = \left(  \frac{\partial}{\partial x_j}\widetilde{f}_0 \right)_{j = 1, \dots, n}$ does not vanish on any orbit.
\end{proposition}
\begin{proof}
Towards a contradiction, we assume that there exists a cone $\sigma \in \Sigma$ such that 
for every $j = 1, \dots, n$ the polynomial $\frac{\partial}{\partial x_j}\widetilde{f}_0$ vanishes on the associated torus orbit $O(\sigma)$ of $X$.
We denote $\left.\frac{\partial}{\partial x_j}\widetilde{f}_0\right|_{O(\sigma)}$ as the restriction of $\frac{\partial}{\partial x_j}\widetilde{f}_0$ to the cone over $O(\sigma)$. It is obtained by substituting all variables $x_\rho$ with zero, where $\rho$ is contained in $\sigma$.

Now consider the face $\newt(f_0)^{\sigma}$
of $\newt(f_0)$ exposed by $\sigma$.
For every lattice point $m$ of $\newt(f_0)^{\sigma}$ the monomial 
\[
\frac{\partial}{\partial x_j} \prod_{ \rho \in \Sigma(1)} x_\rho^{\langle m, \rho \rangle + a_{\rho,0}},
\quad  a_{\rho,0} = - \min \{ \langle m, \rho \rangle \ : \  m \in \newt(f_0)  \}
\] of $\frac{\partial}{\partial x_j}\widetilde{f}_0$ only vanishes on $O(\sigma)$ if $m_j = 0$.
In particular, the face $\newt(f_0)^{\sigma}$ can only contain
the single element $\underline{0}$. 
By assumption~\ref{def: appropriate} on $X$, $\underline{0}$ is a smooth vertex of $\newt(f_0)$, and dual to the cone $\R_{\geq 0}^n$. Since $\sigma$ reveals the vertex $\underline{0}$, it has to intersect the interior of the positive orthant $\R_{\geq 0}^n$ and in fact
both cones are equal.
This leaves us with the case where the torus orbit is $\{0\}$.
But the gradient $\widetilde{\nabla} \widetilde{f}_0$  does not vanish uniformly at $0$. \end{proof}

Let $V$ and $W$ denote the varieties from \Cref{subsection: homogeneous critical point equations}:

\[V = \VV \left( \widetilde{f}_1, \dots \widetilde{f}_m \right) \quad \text{and} \quad 
W = \{ x \in X \ : \  \rank \left( \widetilde{\nabla} \widetilde{f}_0  \mid \dots \mid  \widetilde{\nabla} \widetilde{f}_m \right) \leq m  \}. \]
\begin{proposition}
The variety $V \cap W$ is of dimension zero.
\label{prop: V cap W is finite}
\end{proposition}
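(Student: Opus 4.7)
My approach is to stratify $X$ by its torus orbits $O(\sigma)$ for $\sigma \in \Sigma$ and bound the dimension of $V \cap W$ on each stratum. The starting point is \Cref{prop: rank M is m+1}: on $V$, the truncated matrix $(\widetilde{\nabla}\widetilde{f}_1, \ldots, \widetilde{\nabla}\widetilde{f}_m)$ has full column rank $m$. Consequently, at every point of $V \cap W$ the matrix $M$ has rank exactly $m$, and near such a point the condition $\operatorname{rank} M \leq m$ reduces to the vanishing of $n-m$ polynomial equations on $V$, namely the $(m+1)\times(m+1)$ minors of $M$ containing a chosen nonzero $m\times m$ minor of the truncated matrix. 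Since $V$ is pure of dimension $n-m$ (by Bertini applied in the smooth locus of $X$, using strong admissibility), the expected dimension of $V \cap W$ is $0$.

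To match the expected dimension with the actual one, I would treat each orbit separately. On the open torus $O(\{0\}) = (\mathbb{C}^*)^n$, the variety $V \cap (\mathbb{C}^*)^n$ is smooth of dimension $n-m$, and by \Cref{lemma: homogeneous criticality equations are correct} the piece $V \cap W \cap (\mathbb{C}^*)^n$ is the critical locus of the generic Laurent polynomial $f_0$ restricted to this smooth variety; Sard's theorem then forces it to be zero-dimensional. For a non-trivial cone $\sigma \in \Sigma$, Khovanskii's transversality theorem (as invoked in the proof of \Cref{prop: rank M is m+1}) gives that $V \cap O(\sigma)$ is smooth of dimension $\dim O(\sigma) - m$, while the $n-m$ rank equations have expected codimension $n-m$ on each orbit. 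Hence the expected dimension of $V \cap W \cap O(\sigma)$ is $\dim O(\sigma) - n < 0$, and I would aim to show that this stratum is empty for generic $\mathbf{F}$.

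The main obstacle is rigorously establishing this emptiness on the non-open orbits, where the coefficient genericity argument requires care. The key tool is \Cref{prop: gradient does not vanish uniformly}, which excludes the degenerate situation in which $\widetilde{\nabla}\widetilde{f}_0$ vanishes identically on $O(\sigma)$ (which would trivially place the last column of $M$ in the span of the others and destroy the transversality count). Combining this proposition with a Bertini-type argument in the coefficient space of $\mathbf{F}$ restricted to each orbit, the $n-m$ rank equations cut $V \cap O(\sigma)$ down by the full expected codimension, producing the required emptiness. Together with the zero-dimensional piece on the open torus, this yields $\dim V \cap W = 0$.
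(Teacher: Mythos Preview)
Your stratification-and-dimension-count strategy is a reasonable route, different in spirit from the paper's proof, which argues by contradiction: assuming a curve $C \subseteq V \cap W \cap O(\sigma)$ exists, it splits into two cases according to whether $\widetilde{f}_0|_{O(\sigma)}$ is a monomial. However, your proposal has a genuine gap on the boundary strata.

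The problematic case is when the face of $\newt(f_0)$ exposed by a nontrivial cone $\sigma$ is a single vertex, so that $\widetilde{f}_0|_{O(\sigma)}$ is a scalar multiple of a monomial. Then the first column $\widetilde{\nabla}\widetilde{f}_0$ of $M$, restricted to $O(\sigma)$, has each entry either zero or a fixed monomial; in particular the condition ``$\widetilde{\nabla}\widetilde{f}_0(x) \in \operatorname{span}\bigl(\widetilde{\nabla}\widetilde{f}_1(x),\ldots,\widetilde{\nabla}\widetilde{f}_m(x)\bigr)$'' on $O(\sigma)$ is independent of the coefficients of $f_0$. A Bertini argument in the coefficient space of $f_0$ therefore gives no leverage on this stratum, and \Cref{prop: gradient does not vanish uniformly} only rules out the sub-case where the entire column vanishes---it does not by itself force the rank equations to cut down by the expected codimension.

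The paper handles exactly this case (its second case) by an elimination step: using that some $\frac{\partial}{\partial x_l}\widetilde{f}_0|_{O(\sigma)}$ is a nonvanishing monomial (this is what \Cref{prop: gradient does not vanish uniformly} actually buys), it row-reduces $M$ to clear the first column below row $l$, leaving an $(n-1)\times m$ block $A$ built only from $\widetilde{f}_1,\ldots,\widetilde{f}_m$. If $V\cap W\cap O(\sigma)$ contained a curve, $A$ would drop rank along it, and a further genericity argument forces the truncated matrix $(\widetilde{\nabla}\widetilde{f}_1,\ldots,\widetilde{\nabla}\widetilde{f}_m)$ itself to drop rank at some point of $V$, contradicting \Cref{prop: rank M is m+1}. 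Your plan correctly names the two key inputs but is missing this reduction; without it the monomial-face case is not covered.

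A small separate remark: on the open torus, ``Sard's theorem'' is not quite the right citation---Sard controls the measure of critical \emph{values}, not the dimension of the critical \emph{locus}. What you want there is a generic-transversality/Bertini argument in the coefficients of $f_0$, which does work since on $(\C^*)^n$ the full monomial support of $f_0$ is available.
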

\begin{proof}
Towards a contradiction we assume that
there exists a torus orbit $O(\sigma)$,
and a curve $C$ such that $C$ is contained in the intersection $W \cap V \cap O(\sigma)$.
We denote by $\left.\widetilde{f}_0\right|_{O(\sigma)}$ the restriction of $\widetilde{f}_0$ to $O(\sigma)$. It is obtained by substituting all variables $x_\rho$ with zero, where $\rho$ is contained in $\sigma$.
We now distinguish two cases:
either $\left.\widetilde{f}_0\right|_{O(\sigma)}$ vanishes somewhere on $O(\sigma)$, or it is a scalar multiple of a monomial. In the first case $\left.\widetilde{f}_0\right|_{O(\sigma)}$ vanishes on $C$ by genericity. In particular, the matrix $M$ drops rank somewhere on the vanishing locus
$\VV \left( \widetilde{f}_0, \dots \widetilde{f}_m \right)$, contradicting \Cref{prop: rank M is m+1}.

In the second case, we now derive a contradiction from \Cref{prop: rank M is m+1} by showing that 
the matrix $\left( \widetilde{\nabla} \widetilde{f}_1, \dots, \widetilde{\nabla} ,\widetilde{f}_m \right)$
drops rank somewhere on $C$.
Suppose $\left.\widetilde{f}_0\right|_{O(\sigma)}$ is
a monomial.
Then each restriction
$\left.\frac{\partial}{\partial x_j}\widetilde{f}_0\right|_{O(\sigma)}$
is either a monomial or zero, and
by \Cref{prop: gradient does not vanish uniformly} there is an index 
$\ell = 1, \dots, n$ such that
$\left.\frac{\partial}{\partial x_\ell}\widetilde{f}_0\right|_{O(\sigma)}$ is not zero.
Without loss of generality, we assume 
$\ell=1$. 
Consider the following matrix, $M^*$, obtained by
subtracting 
from the $j-$th row of $M$ the multiple
\[
\frac{\frac{\partial}{\partial x_j}\widetilde{f}_0}{\frac{\partial}{\partial x_1}\widetilde{f}_0}
\left( \frac{\partial}{\partial x_1}\widetilde{f}_0, \dots , \frac{\partial}{\partial x_1} \widetilde{f}_m\right)  \]
of the first row, for each $j = 2, \dots, n$.
This eliminates the first entry in all but the first row:
\begin{gather*}
M^* = 
    \begin{bmatrix}
    \frac{\partial}{\partial x_1}\widetilde{f}_0 &  \frac{\partial}{\partial x_1} \widetilde{f}_1\dots  \frac{\partial}{\partial x_1}\widetilde{f}_m \\
        \begin{matrix}
        0 \\ \vdots \\ 0
    \end{matrix} 
    & A
\end{bmatrix}.
\end{gather*}
Since $M$ drops rank everywhere on $C$ and $\frac{\partial}{\partial x_1} \widetilde f_0$ is not identically zero, $A$ also drops rank on $C$.
Let $\mu = \left( \mu_1, \dots, \mu_n \right)^T$ be a vector of rational functions on $O(\sigma)$
satisfying
\[
A  \mu = 0
\]
everywhere on $C$.
Since the expression $ \mu_1 \frac{\partial}{\partial x_1} \widetilde{f}_1 + \dots + \mu_n \frac{\partial}{\partial x_n} \widetilde{f}_1$,
is not a monomial on $O(\sigma)$, it vanishes at some point $x$ in $C$ by genericity.
This shows that $\mu(x)$ is in the right kernel of 
$\left( \widetilde{\nabla} \widetilde{f}_1(x), \dots, \widetilde{\nabla} \widetilde{f}_m(x) \right)$, 
finishing the proof.
\end{proof}

\begin{lemma}
\label{lemma: transversality upstairs}
The intersection $Z \cap \VV(\widetilde{f}_1, \dots, \widetilde{f}_m)$ is transversal and contained in the big torus $(\C^*)^{n+m}$ in the toric variety $\P(\EE)$.
\end{lemma}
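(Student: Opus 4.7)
The plan is to pass through the projection $\pi\colon \P(\EE) \to X$, reducing the claim to statements already established downstairs on $X$. By Lemma~\ref{lemma: homogeneous criticality equations are correct}, $\pi$ maps $Z \cap \pi^{-1}(V)$ into $V \cap W$, which is finite by Proposition~\ref{prop: V cap W is finite}. At each $x \in V \cap W$ the second assertion of Proposition~\ref{prop: rank M is m+1} forces $M(x)$ to have rank exactly $m$, so the projective kernel $[\lambda(x)]$ is unique; the set-theoretic fibre of $\pi$ over $V \cap W$ is therefore a single point over each $x$. What remains is to verify that (a) every such $(x, [\lambda])$ lies in the open torus $(\C^*)^{n+m}$, and (b) the intersection is scheme-theoretically reduced.

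For (a) the $x$-coordinate and the $\lambda$-coordinate are handled separately. For $x$, I would revisit the proof of Proposition~\ref{prop: V cap W is finite}, replacing the hypothetical curve $C$ on a boundary orbit $O(\sigma)$ with a hypothetical isolated point; the same dichotomy, namely whether $\left.\widetilde{f}_0\right|_{O(\sigma)}$ vanishes at the point or is a monomial there, combined with Proposition~\ref{prop: rank M is m+1}, Proposition~\ref{prop: gradient does not vanish uniformly} and Bernstein's initial-form criterion (Theorem~\ref{thm: bkk 2}) applied to the face system $(\init_\sigma \widetilde{f}_1, \ldots, \init_\sigma \widetilde{f}_m)$, produces a contradiction: admissibility forces $\sigma$ to reveal the vertex $\underline{0}$ of every $\newt(f_i)$, hence $\sigma = \R_{\geq 0}^n$, contradicting the assumption that $O(\sigma)$ is a proper boundary stratum. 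For the $\lambda$-coordinate, $\lambda_0 \neq 0$ is immediate from the full-rank statement for $(\widetilde{\nabla}\widetilde{f}_1, \ldots, \widetilde{\nabla}\widetilde{f}_m)$; non-vanishing of $\lambda_i$ for $i \geq 1$ is a genericity argument, since $\lambda_i = 0$ would simultaneously place $x$ in the critical locus of the constraint-reduced problem and on the generic hypersurface $\{f_i = 0\}$, a coincidence that fails for generic coefficients of $f_i$.

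For (b), I would pass to the affine chart $\lambda_0 = 1$ of $\P(\EE)$ with local coordinates $(x_1, \ldots, x_n, \lambda_1, \ldots, \lambda_m)$ and compute the Jacobian of the Lagrange system $(\widetilde{f}_1, \ldots, \widetilde{f}_m, \widetilde{\ell}_1, \ldots, \widetilde{\ell}_n)$. It has the block form
\[
\begin{pmatrix}
J_x f & 0 \\
H & -J_x f^{\mathsf T}
\end{pmatrix},
\]
where $J_x f$ is the $m \times n$ Jacobian of the constraints, of full row rank $m$ by Proposition~\ref{prop: rank M is m+1}, and $H$ is the Hessian-like block of the Lagrangian. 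Invertibility of this square $(n+m) \times (n+m)$ matrix is precisely the classical bordered-Hessian non-degeneracy condition at the critical point; it holds for generic $f_0$ by a standard Sard-type argument on the parameter space $\C^{\AA_0}$, yielding transversality and smoothness of every critical point simultaneously.

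The main obstacle is the boundary exclusion in step (a): Proposition~\ref{prop: V cap W is finite} only rules out positive-dimensional components of $V \cap W$ on each torus orbit, and promoting the exclusion to isolated boundary points requires a more careful orbit-by-orbit use of admissibility together with the initial-form test, since the dichotomy in the original argument exploited the presence of a one-dimensional $C$ on which monomials could vanish generically.
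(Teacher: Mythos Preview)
Your strategy differs substantially from the paper's. You split the work into (a) an orbit-by-orbit torus-containment argument and (b) a local bordered-Hessian computation for reducedness. The paper instead runs a single Bertini argument on $\P(\EE)$: having established finiteness of $V\cap W$ (Proposition~\ref{prop: V cap W is finite}), it observes that as the coefficients of $f_0$ vary, the family $Z\cap\VV(\widetilde f_1,\dots,\widetilde f_m)$ is basepoint-free on $\pi^{-1}(V)$. Over any $x\in V$ the last $m$ columns of $M(x)$ are independent (Proposition~\ref{prop: rank M is m+1}), so perturbing the first column $\widetilde\nabla\widetilde f_0$ moves the unique $[\lambda]$ in the fibre, and Proposition~\ref{prop: gradient does not vanish uniformly} ensures this column is never identically zero. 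Bertini then delivers smoothness, and repeating the same argument inside each proper torus orbit $O$ of $\P(\EE)$ forces $O\cap Z\cap\pi^{-1}(V)=\varnothing$ for dimension reasons. Containment of $V\cap W$ in $(\C^*)^n$ is only obtained \emph{afterwards}, by pushing down via $d\pi$ (Lemma~\ref{lemma: transversality downstairs}).

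The obstacle you flag in step~(a) is real and is precisely what the paper's route avoids. Your plan to upgrade Proposition~\ref{prop: V cap W is finite} from ``no boundary curve'' to ``no boundary point'' cannot recycle its dichotomy: the first branch there (when $\widetilde f_0|_{O(\sigma)}$ is not a monomial) uses that a \emph{generic} section vanishes somewhere on a positive-dimensional $C$, and this fails outright for an isolated point. Invoking Theorem~\ref{thm: bkk 2} does not obviously repair this, since the initial-form test governs torus solutions of a square system, not the location of $V\cap W$ inside the orbit stratification of $X$; you would need a separate argument that the face system $\init_\sigma(f_1,\dots,f_m)$ together with the rank-drop condition has no solution on $O(\sigma)$, and the sketch you give does not supply one. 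Your bordered-Hessian computation in~(b) is a legitimate alternative for transversality once torus containment is secured, but it is redundant under the paper's Bertini approach, which yields both conclusions at once.
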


\begin{proof}
The image of $Z\cap \VV(\widetilde{f}_1, \dots, \widetilde{f}_m)$ under the natural projection $\pi: \P(\EE) \longrightarrow X$ is $W\cap \VV(\widetilde{f}_1, \dots, \widetilde{f}_m)$, which by \Cref{prop: V cap W is finite} is finite.
We prove below that $\pi$ bijectively identifies both sets.
In particular, the $n$ defining equations of $Z$, given by $M(x) \lambda = 0$, form a complete intersection when restricted to $\VV(\widetilde{f}_1, \dots, \widetilde{f}_m)$.

To inductively apply Bertini to the equations $\left(M(x) \lambda\right)_j = 0$ we now show that, for varying coefficients of $f_0$,
$Z \cap \VV(\widetilde{f}_1, \dots, \widetilde{f}_m)$ defines a basepoint-free family of varieties on the vanishing locus
$\VV(\widetilde{f}_1, \dots, \widetilde{f}_m)$ in $\P(\EE)$.
To do this, we fix any element $x$ of $V$ and show that $Z$ does not have a fixed point in the fiber $\pi^{-1}(x)$.
By \Cref{prop: rank M is m+1} 
the last $m$ columns 
$
\left( \widetilde{\nabla} \widetilde{f}_1, \dots, \widetilde{\nabla} \widetilde{f}_m \right)
$
of $M$ are linearly independent.
In particular,
varying the first column $\widetilde{\nabla} \widetilde{f}_0$ changes the unique solution $[\lambda]$ to $M(x) \lambda = 0$.
It now suffices to see that the gradient $\widetilde{\nabla} \widetilde{f}_0$
does not vanish uniformly at $x$, which by \Cref{prop: gradient does not vanish uniformly} is true for generic coefficients of $f_0$.
To see that $Z \cap \VV(\widetilde{f}_1, \dots, \widetilde{f}_m) $ is contained in the big torus, $(\mathbb{C}^*)^{n+m}$, orbit, we apply the same Bertini type argument to show transversality of the intersection $ O \cap \VV\left(\widetilde{f}_1, \dots, \widetilde{f}_m\right)   \cap Z$. Here $O$ denotes any torus orbit on $\P(\EE)$. 
For dimensional reasons, this intersection can only be nonempty for the big torus $(\mathbb{C}^*)^{n+m}$ orbit. 
\end{proof}

\subsection{The proof of \Cref{thm: A}}

The idea behind the proof of \Cref{thm: A} is to study the system of homogenized Lagrange equations $\widetilde{\mathbf{L}}_F = (\widetilde{f}_1,\ldots,\widetilde{f}_m,\widetilde{\ell}_1,\ldots,\widetilde{\ell}_n)$. We show that it comprises global sections of $\Q$-Cartier divisors, that intersect transversely and away from infinity.
This expresses the number of solutions as a product of Chern classes, which is a mixed volume.

For the rest of 
this subsection, we impose the assumptions of \Cref{thm: A}.
Let $\Sigma$ be the normal fan of the Minkowski sum of the polytopes $\newt(f_0), \dots, \newt(f_n)$,
and let
$X$ be the associated normal toric variety.
Then the assumptions from the beginning of \Cref{subsection: homogeneous critical point equations} are fulfilled,
since $X$ is appropriate for $\AA$, and $V$ does not intersect the singular locus of $X$.

Again, let $\Phi_F$ denote the Lagrangian 
$\Phi_F(\lambda, y) = f_0 - \sum_{i=1}^m \lambda_i f_i$, 
and $\ell_j$ the partial differentials $\frac{\partial}{\partial y_j} \left( f_0 - \sum_{i=1}^m \lambda_i f_i \right)$ of
$\Phi_F$.
For each $j = 1, \dots, n$, we define the homogenization $\widetilde{\ell}_j$ of $\ell_j$ as a section of the divisor $D_{\Phi_F} -  D_{ \widetilde{e}_j}$ on $\P(\EE)$. We want to prove that this divisor is the torus invariant divisor associated with a translate of the rational polytope $\partial_j \newt(\Phi_F)$ but in order to do that we need a few propositions.

\begin{proposition}\label{prop:O(1)cay}
Let $X$ be a toric variety and $\EE = \bigoplus \LL_{P_i}$ be a direct sum of line bundles on $X$. Then the the relative $\OO(1)$ bundle of $\mathbb{P}(\EE)$ is represented by the Cayley polytope
$\cay(P_1,\ldots, P_m)$.
\end{proposition}
\begin{proof}
    The space of sections $H^0(\P(\EE),\OO(1))$ is canonically isomorphic to 
    \[H^0(X,\EE)= \bigoplus_i H^0(X,\LL_{P_i}).\]
    Moreover, $H^0(X,\LL_{P_i})$ is the weight space of $H^0(X,\EE)$ with respect to $(\C^*)^m$ acting fiber-wise on $\P(\EE)$ corresponding to the $i$-th basis vector of $\Z^m$.
    Since the weights of the base torus acting on $H^0(X,\LL_{P_i})$ is given by the lattice points of $P_i$, we obtain the~result.
\end{proof}

\begin{proposition}
\label{prop: taking faces commutes with Cayley construction}
     Let $\sigma$ be a cone 
    in the normal fan $\Sigma(P_1+\cdots+P_n)$. Then the face of $\cay(P_1, \dots, P_n)$ exposed by $\widetilde{\sigma}$ is equal to the Cayley polytope of the faces
    $P_1^\sigma, \dots, P_n^\sigma$:
    \begin{equation}
    \label{eq: Cayley construction commutes with taking faces}
           \cay(P_1, \dots, P_n)^{\widetilde{\sigma}} = \cay(P_1^\sigma, \dots, P_n^\sigma).  
    \end{equation}
\end{proposition}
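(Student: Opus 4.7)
The plan is to verify the claimed equality of faces of $\cay(P_1, \ldots, P_n)$ by a direct computation with support functions, exploiting that $\widetilde{\sigma}$ is the lift of $\sigma$ to the normal fan of the Cayley polytope described in the projective bundle construction earlier in the paper. Recall from that construction that $\widetilde{\sigma} = \{(y, \Psi(y)) : y \in \sigma\}$, where $\Psi(y) = (\psi_1(y), \ldots, \psi_n(y))$ assembles (appropriately shifted) support functions $\psi_i$ of the polytopes $P_i$.

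First, I will pick a covector $v = (y, \Psi(y))$ in the relative interior of $\widetilde{\sigma}$ and, for any lattice point $(x, e_i)$ of $\cay(P_1, \ldots, P_n)$, rewrite
\[
\langle v, (x, e_i)\rangle = \langle y, x\rangle + \psi_i(y).
\]
With the sign convention $\psi_i(y) = -\min_{x' \in P_i}\langle y, x'\rangle$, the right hand side is nonnegative on every $P_i$ and vanishes exactly on $P_i^y$. The crucial point is that the normalization built into $\widetilde{\sigma}$ makes this minimum value equal to zero simultaneously for every index $i$, not just the one for which $\min \langle y, P_i\rangle$ is smallest; this balancing across the summands is precisely the reason for defining the lift as $(y, \Psi(y))$ rather than $(y, 0)$.

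Since $\sigma$ lies in the normal fan of $P_1 + \cdots + P_n$, for $y$ in the relative interior of $\sigma$ we have $P_i^y = P_i^\sigma$ for every $i$. Assembling the contributions from each $i$ then yields
\[
\cay(P_1, \ldots, P_n)^{\widetilde{\sigma}} = \conv\{(x, e_i) : x \in P_i^\sigma,\ i = 1, \ldots, n\} = \cay(P_1^\sigma, \ldots, P_n^\sigma),
\]
which is the desired equality.

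The main obstacle is purely notational: fixing the sign and normalization conventions for the $\psi_i$ consistently with the definition of $\widetilde{\sigma}$ coming from the fan of $\P(\EE)$, so that the shift in the second coordinate of $v$ exactly cancels the differences $\min\langle y, P_i\rangle - \min\langle y, P_j\rangle$ between pairs of indices. Once this bookkeeping is dispatched, the verification reduces to the one-line computation above.
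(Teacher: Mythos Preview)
Your proposal is correct: once the covector $v=(y,\Psi(y))$ is chosen in the relative interior of $\widetilde{\sigma}$, the pairing $\langle v,(x,e_i)\rangle=\langle y,x\rangle+\psi_i(y)$ is nonnegative on every $P_i\times\{e_i\}$ and vanishes precisely on $P_i^y\times\{e_i\}=P_i^\sigma\times\{e_i\}$, which identifies the exposed face as $\cay(P_1^\sigma,\dots,P_n^\sigma)$. The normalization you single out---that the shifts $\psi_i(y)$ force the minima over the slices $P_i\times\{e_i\}$ to coincide---is exactly the content of the lift $\widetilde{\sigma}$, and your remark that this is where the bookkeeping lives is accurate; one must also pass through the quotient by $\R\cdot(1,\dots,1)$ to land in the ambient space of the Cayley polytope, but this only changes $\Psi$ by a global scalar and does not affect the exposed face.

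The paper notes that this direct computation works but actually writes down a different argument: it invokes \Cref{prop:O(1)cay}, which identifies $\OO_{\P(\EE)}(1)$ with the line bundle of the Cayley polytope, and then reads \eqref{eq: Cayley construction commutes with taking faces} as the statement that restricting $\OO_{\P(\EE)}(1)$ to the orbit closure $X_\sigma$ agrees with the tautological bundle of $\P(\EE|_{X_\sigma})$. Your approach is more elementary and entirely self-contained in convex geometry; the paper's approach is shorter on the page but relies on the toric dictionary between polytopes and line bundles developed later in the appendix. Either is fine, and the paper explicitly sanctions yours.
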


\begin{proof}
This can be done by direct computation. A different argument relies on \Cref{prop:O(1)cay}.
For this, denote by
$X_\sigma$ the closure of the torus orbit of $X_\Sigma$ corresponding to $\sigma\in \Sigma$.
By \Cref{prop:O(1)cay}, the equation \eqref{eq: Cayley construction commutes with taking faces} is equivalent to
\[
\OO_{\P(\EE)}(1)\big|_{X_\Sigma}
=
\OO_{\P(\EE|_{X_\Sigma})}(1). \qedhere
\]
\end{proof}

\begin{lemma}
          \label{lemma: differentials are basepoint-free}
    For all $j = 1, \dots, n$,
    the divisor $D_{\Phi_F} -  D_{ \widetilde{e}_j}$
    on 
    $\P(\EE)$  
    is rationally equivalent to the divisor associated with the rational polytope 
    $\partial_j \newt(\Phi_F)$.
\end{lemma}
  
\begin{proof}
We now prove that 
the divisor $D_{\Phi_F} -  D_{ \widetilde{e}_j}$ is associated with the polytope $  e_j+ \partial_j \newt(\Phi_F)$. Note that $ e_j+ \partial_j \newt(\Phi_F)$ is the intersection of $\newt(\Phi_F)$ with the affine halfspace $\{ x_{j} \geq 1 \}$. 
We have to prove that the support function of
$\newt(\Phi_F) \cap{ \{x_j \geq 1 \} }$ takes the same value on all rays of $\Sigma_\EE$, except for $\widetilde{e_j}$,
where it differs by one.
Let $v$ be any element of $\R^{n+m}$.
The value
\[
- \min \{ \langle w, v\rangle \ : \  w \in \newt(\Phi_F) \}
\]
of the support function of $\newt(\Phi_F)$ on $v$
can only differ if the face $\newt(\Phi_F)^{v}$ is contained in the facet
\[\newt(\Phi_F)^{\widetilde e_j} = \newt(\Phi_F) \cap{ \{ x_j = 0 \} }.\]
Note that a face of the form $\newt (\Phi_F)^{  \{0\} \times e_i  } $ is equal to the Cayley polytope
$$\cay(\AA_0, \dots, \AA_{i-1}, \AA_{i+1}, \dots, \AA_m),$$ where we omit one of the constraints.
In particular, it is always a facet, so
we may restrict to rays of the form $\widetilde{\rho}.$
Let  $\widetilde{\rho}$ be a ray such that 
$\newt(\Phi_F)^{\widetilde \rho}$ is contained in $\newt(\Phi_F)^{\widetilde e_j}$.
By \Cref{prop: taking faces commutes with Cayley construction}  we have
\begin{align*}
    & \,\cay \left( \newt(f_0)^{\rho}, \dots, \newt(f_m)^{\rho} \right) \\
 = 
 &\,\newt(\Phi_F)^{\widetilde{\rho}} \\
  \subseteq
 & \,\newt(\Phi_F)^{\widetilde{e_j}} \\
=
& \,\cay \left( \newt(f_0)^{e_j}, \dots, \newt(f_m)^{e_j} \right),
\end{align*}
implying $ \newt(f_i)^{\rho} \subseteq \newt(f_i)^{e_j}$ for all $i =  0, \dots, m$.
We obtain
\[
\left( \sum_{i = 0}^m  \newt(f_i) \right )^{\rho}
 =  \sum_{i = 0}^m  \newt(f_i) ^{\rho}
 \subseteq \sum_{i = 0}^m  \newt(f_i) ^{e_j}
 =  \left ( \sum_{i = 0}^m \newt(f_i) \right )^{e_j}.
\]
This is an inclusion of facets of the Minkowski sum $\sum_{i = 0}^m \newt(f_i)$, 
so $\rho = e_j$.
\end{proof}

Before proving \Cref{thm: A} we need to prove a statement about the intersection of $\Q$-Cartier divisors.
\begin{lemma}[Generic intersection of $\mathbb  Q$-Cartier divisors]
\label{lemma: intersection of Q-cartier divisors}
Let $X$ be a normal, proper variety of dimension $n$ with Weyl divisors $D_{1}, \dots, D_{n}$, and let $k$ be an integer such that $\OO(k D_{i})$ is a line-bundle for each $i = 1, \dots, n$.
Let $\widetilde{f_i}$ be a global section of $\OO(D_{i})$ for $i=1, \dots, n$ such that
$\VV(\widetilde{f}_1, \dots, \widetilde{f}_n)$ is a zero-dimensional smooth scheme contained in the smooth locus of $X$.
Then
\[
k^{n} \# \VV(\widetilde{f}_1, \dots, \widetilde{f}_n) = 
c_1(\OO(k D_{1})) \cdots c_1(\OO(k D_{n})).
\]
\end{lemma}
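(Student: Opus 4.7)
The plan is to reduce the statement to the classical intersection formula for line bundles by passing from each $\widetilde{f}_i$ to $\widetilde{f}_i^k$. For a Weyl divisor $D_i$ such that $\OO(kD_i)$ is a line bundle, the $k$-fold product of sections defines a natural map $H^0(X, \OO(D_i)) \to H^0(X, \OO(kD_i))$, $f \mapsto f^k$ (at the level of reflexive rank-one sheaves on the smooth locus this is tensor product, and both sides extend uniquely to $X$ because $X$ is normal). So $s_i \coloneqq \widetilde{f}_i^k$ is a genuine global section of the line bundle $L_i \coloneqq \OO(kD_i)$, and $\VV(s_i) = \VV(\widetilde{f}_i)$ as sets.

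Next I would verify the multiplicity bookkeeping. The hypotheses guarantee that the zero-dimensional scheme $\VV(\widetilde{f}_1, \ldots, \widetilde{f}_n)$ is reduced and supported in the smooth locus of $X$. Hence at every point $p \in \VV(\widetilde{f}_1, \ldots, \widetilde{f}_n)$, the local ring $\mathcal{O}_{X,p}$ is regular of dimension $n$ and the classes of $\widetilde{f}_1, \ldots, \widetilde{f}_n$ form a regular system of parameters. Passing to $k$-th powers multiplies the length of the local Artinian quotient by $k^n$, since $\mathcal{O}_{X,p}/(\widetilde{f}_1^k, \ldots, \widetilde{f}_n^k)$ has a basis given by monomials $\widetilde{f}_1^{a_1} \cdots \widetilde{f}_n^{a_n}$ with $0 \le a_j \le k-1$. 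Summing over points yields
\[
\deg \VV(s_1, \ldots, s_n) \;=\; k^n \cdot \#\VV(\widetilde{f}_1, \ldots, \widetilde{f}_n),
\]
where on the left we count the scheme-theoretic length of the (again zero-dimensional) intersection.

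Finally, since each $L_i$ is a genuine line bundle and the common vanishing scheme $\VV(s_1, \ldots, s_n)$ is zero-dimensional and supported in the smooth locus of the proper variety $X$, the standard top intersection formula applies: $\deg \VV(s_1, \ldots, s_n) = c_1(L_1) \cdots c_1(L_n)$. (One argues inductively by cutting with one Cartier divisor at a time, using that the intersection remains proper of the expected dimension because the total intersection is zero-dimensional, and transversality in the smooth locus allows the identification with the naive point count at each stage.) Combining the two displays gives the asserted identity.

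The main obstacle is the multiplicity step: one must be confident that the scheme-theoretic hypothesis on $\VV(\widetilde{f}_1, \ldots, \widetilde{f}_n)$ really forces transversality at each point, so that the $k$-th power sections contribute exactly $k^n$ at every point and no extra contributions arise from non-Cartier behaviour of the $D_i$. This is exactly what is encoded in the assumption that the intersection is a smooth zero-dimensional scheme inside the smooth locus of $X$; without it one would have to work with refined local intersection multiplicities and the clean factor of $k^n$ would not appear.
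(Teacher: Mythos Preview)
Your proof is correct and follows essentially the same route as the paper: pass to the $k$-th powers $\widetilde{f}_i^{\,k}$, which are honest sections of the line bundles $\OO(kD_i)$, use transversality at each point of the reduced zero-dimensional scheme to identify the local scheme with $\operatorname{Spec}\C[x_1,\dots,x_n]/(x_1^k,\dots,x_n^k)$ of length $k^n$, and then invoke the standard top intersection formula for line bundles. Your write-up is in fact more careful than the paper's in spelling out why $\widetilde{f}_i^{\,k}$ is a global section of $\OO(kD_i)$ and why the local multiplicity is exactly $k^n$.
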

\begin{proof}
The length of the zero-dimensional scheme $\VV((\widetilde{f}_1)^k, \dots, (\widetilde{f}_n)^k)$is equal to the product
$c_1(\OO(k D_{1})) \cdots c_1(\OO(k D_{n}))$ of Chern classes.
On the other hand, since $\widetilde{f}_1, \dots, \widetilde{f}_n$ intersect transversely, each isolated point of 
$\VV((\widetilde{f}_1)^k, \dots, (\widetilde{f}_n)^k)$ is isomorphic
to the scheme 
$\operatorname{Spec}  \C[X_1, \dots, X_n] / \langle X_1^k, \dots, X_n^k \rangle$.
In particular, we have
\[ 
k^{n} \cdot \operatorname{length}(\VV(\widetilde{f}_1, \dots, \widetilde{f}_m)) =
\operatorname{length}(\VV((\widetilde{f}_1)^k, \dots, (\widetilde{f}_n)^k)),
\]
finishing the proof.
\end{proof}

\begin{proof}[Proof of \Cref{thm: A}]
The vanishing locus of the homogenized system of Lagrange equations
$\widetilde{\mathbf{L}}_F = (\widetilde{f}_1,\ldots,\widetilde{f}_m,\widetilde{\ell}_1,\ldots,\widetilde{\ell}_n)$
 is the intersection of $Z$ with the vanishing locus of $\widetilde{f}_1,\ldots,\widetilde{f}_m$ and by \Cref{lemma: transversality upstairs} this is a smooth zero-dimensional variety, contained in $(\mathbb{C}^*)^{n+m}$.
By \Cref{lemma: homogeneous criticality equations are correct}, the algebraic degree of sparse polynomial optimization is equal to its cardinality.
According to \Cref{lemma: differentials are basepoint-free}, the system $\widetilde{\mathbf{L}}_F$ comprises global sections of $\Q$-Cartier divisors, associated with the respective, rational, polytopes $\newt(f_1), \dots, \newt(f_m), \partial_1  \newt( \Phi_F ) ,\dots, \partial_n  \newt(\Phi_F ).$
As a consequence of \Cref{lemma: intersection of Q-cartier divisors}, and using multilinearity of the mixed volume, 
we can express the number of solutions to $\widetilde{\mathbf{L}}_F$ as the mixed volume \eqref{eq:Mixed_Volume_partialDelta} of these polytopes.
\end{proof}

\subsection{The proof of \Cref{thm: C}}
In this section, we study the intersection of the determinantal variety $W$ with the vanishing locus $V$ of $\widetilde{f}_1,\ldots,\widetilde{f}_m$ in $X$.
The proof of \Cref{thm: C} rests on a proof of transversality, and a characterisation of the cohomology class $[W]$ as a Porteous class.

We start by recalling Porteous' formula, also called the Giambelli–Thom–Porteous formula.
For more details refer to chapter 14 in \cite{Fulton+1998} and chapter 12 in \cite{eisenbud_harris_2016}.
The following statement is a special case of Theorem 12.4 in \cite{eisenbud_harris_2016}.
\begin{theorem}(Porteous' formula)\label{thm: Porteous formula, general case}
Let 
$\varphi : \EE \longrightarrow \FF$ be a morphism of vector bundles
of ranks $m+1 \leq n$ on a smooth proper variety $X$ of dimension $n$.
We denote by $W$ the (possibly non reduced) degeneracy locus of $\varphi$,
supported on the set
\[
|W| = \{ x \in X \ : \  \varphi_x: \EE(x) \longrightarrow \FF(x) \textrm{ is not injective}  \}.
\]
If $W$ is pure of codimension $n-m$ then
the cohomology class of $W$ is the $n-m$ graded part of the product of the total Segre class $\ss(\EE )$ and the total Chern class~$\operatorname{c}( \FF ) $:
\[
\left[ W \right] = \left( \operatorname{s}( \EE ) \chern(\FF ) \right)_{n-m}.
\]

\end{theorem}
We need the following modified version of Porteous' formula which only requires
$W$ to be pure dimensional after restricting to a subvariety $\hat{X}$ of $X$.
\begin{cor}
\label{cor: Porteous when intersected with V}
Under the assumptions of Theorem~\ref{thm: Porteous formula, general case}, let $\hat{X}$ be an irreducible closed subvariety of $X$ of codimension $k$, which intersects $W$ transversely and let the intersection $\hat{X} \cap W$ be pure of codimension $n-m+k$.
Then the cohomology class $\left[ \hat{X} \cap W \right]$ is given by
\[
\left[ \hat{X} \cap W \right] =  \left[ \hat{X} \right] \cdot \left( \operatorname{s}( \EE ) \chern(\FF ) \right)_{n-m}.
\]
\end{cor}
\begin{proof}
Note that $\hat{X} \cap W$ is the degeneracy locus of the restriction $\left.\varphi\right|_{\hat{X}}$. By applying Porteous' formula to $\phi|_{\hat{X}}\colon \EE|_{\hat{X}} \to \FF|_{\hat{X}}$ we get
\[
[\hat{X} \cap W] = \left( \operatorname{s}( \left.\EE\right|_{\hat{X}} ) \chern(\left.\FF\right|_{\hat{X}} ) \right)_{n-m+k}.
\]
Lastly we notice that $\left( \operatorname{s}( \left.\EE\right|_{\hat{X}} ) \chern(\left.\FF\right|_{\hat{X}} ) \right)_{n-m+k} = \left[ \hat{X} \right ] \cdot\left( \operatorname{s}( \EE ) \chern(\FF ) \right)_{n-m}$ by naturality of characteristic classes.
\end{proof}

\begin{lemma}
\label{lemma: transversality downstairs}
Under the assumptions of \Cref{thm: C}
the intersection $V \cap W$ is transversal and contained in 
$\left(\C^*\right)^n$.
\end{lemma}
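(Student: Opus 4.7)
The strategy is to transfer the transversality and torus-containment statements from the upstairs picture on $\P(\EE)$, where \Cref{lemma: transversality upstairs} has already established them, down to $X$ via the bundle projection $\pi \colon \P(\EE) \to X$. The key observation is that $\pi$ sends $Z \cap \pi^{-1}(V)$ bijectively onto $V \cap W$, and moreover does so as an isomorphism of schemes, which will let the smoothness and transversality pass downstairs intact.

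First I would show that $\pi$ is a set-theoretic bijection between $Z \cap \pi^{-1}(V)$ and $V \cap W$. By definition of $W$, for each $x \in V \cap W$ the matrix $M(x)$ drops rank, so there exists at least one $[\lambda]$ with $M(x)\lambda = 0$, giving surjectivity. For injectivity I would invoke \Cref{prop: rank M is m+1} applied to $(\widetilde{\nabla}\widetilde{f}_1, \dots, \widetilde{\nabla}\widetilde{f}_m)$: these last $m$ columns of $M$ are linearly independent everywhere on $V$, so at any $x \in V \cap W$ the rank of $M(x)$ is exactly $m$ and the right kernel is one-dimensional, giving a unique $[\lambda]$. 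Next, the big torus of $\P(\EE)$ is the preimage under $\pi$ of the big torus $(\C^*)^n \subseteq X$, so the inclusion $Z \cap \pi^{-1}(V) \subseteq (\C^*)^{n+m}$ from \Cref{lemma: transversality upstairs} immediately yields $V \cap W \subseteq (\C^*)^n$.

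For transversality, I would argue that $\pi$ is actually an isomorphism of schemes from $Z \cap \pi^{-1}(V)$ onto $V \cap W$ on a neighborhood of each intersection point. Fix $(x_0, [\lambda_0]) \in Z \cap \pi^{-1}(V)$. Locally on the fiber $\P^m$ over a nearby point $x$, the equations $M(x)\lambda = 0$ have Jacobian with respect to the fiber coordinates equal to the last $m$ columns of $M(x)$, which remain linearly independent by \Cref{prop: rank M is m+1}; hence the $\lambda$-projection has trivial vertical tangent direction at $(x_0, [\lambda_0])$, so $d\pi$ is injective on the tangent space of $Z \cap \pi^{-1}(V)$. Since both source and target are zero-dimensional and the source is smooth (by upstairs transversality), it follows that the target $V \cap W$ is reduced and smooth at every one of its points. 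Because $V$ is smooth of codimension $m$ in $X$ by Bertini and $V \cap W$ is smooth of dimension zero, i.e.\ of expected codimension $n - m$ inside $V$, the schemes $V$ and $W$ meet transversally in $X$.

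The main obstacle is the isomorphism-of-schemes argument in the third step: one must check that upstairs transversality does more than make $Z \cap \pi^{-1}(V)$ reduced, and actually forces $d\pi$ to be injective on tangent spaces. This is where \Cref{prop: rank M is m+1} is doing the heavy lifting, by ensuring that the kernel direction of $M(x)$ varies with $x$ in a way controlled by $d_x M$ rather than being an independent fiber direction.
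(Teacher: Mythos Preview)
Your proposal is correct and shares the paper's overall strategy of deducing the downstairs statement from \Cref{lemma: transversality upstairs} via the projection $\pi$, but the transversality argument you give is considerably more roundabout than the paper's.

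The paper argues directly at the level of tangent spaces: at a point $z=(x,[\lambda])\in Z\cap\pi^{-1}(V)$ the upstairs transversality reads $T_{Z,z}+T_{\pi^{-1}(V),z}=T_{\P(\EE),z}$. Since $\pi$ is a submersion, applying $d\pi$ to both sides and using the trivial inclusions $d\pi(T_{Z,z})\subseteq T_{W,x}$, $d\pi(T_{\pi^{-1}(V),z})\subseteq T_{V,x}$ immediately gives $T_{W,x}+T_{V,x}=T_{X,x}$. No scheme-theoretic isomorphism, no implicit function theorem, and no appeal to \Cref{prop: rank M is m+1} are needed for this step.

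Your route---establishing that $\pi$ is a scheme isomorphism from $Z\cap\pi^{-1}(V)$ onto $V\cap W$, then concluding that $V\cap W$ is reduced and zero-dimensional, then inferring transversality---is valid in spirit, but the last inference has a small wrinkle: ``$V\cap W$ reduced of expected dimension'' does not by itself give $T_{V,x}+T_{W,x}=T_{X,x}$ unless you also know $W$ is smooth at $x$. Your local-isomorphism argument does yield smoothness of $W$ at these points (since $Z$ is smooth there), but you should state this explicitly rather than jumping straight to transversality. The paper's one-line pushforward of tangent spaces avoids this detour entirely.
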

\begin{proof}
Under the assumptions of \Cref{thm: C}, the assumptions from \Cref{subsection: homogeneous critical point equations} are satisfied.
The inclusion in 
$(\mathbb{C}^*)^{n}$
follows from \Cref{lemma: transversality upstairs}.
We now show that 
transversality of the intersection $V \cap W$
follows from transversality of the intersection of
$Z$ with $\VV(\widetilde{f}_1, \dots, \widetilde{f}_m)$.
Let $\pi: \P(\EE) \longrightarrow X$ denote the natural projection and let $z = (x, [\lambda])$ be any element of $\P(\EE)$.
If $Z$ intersects $\pi^{-1}(V)$
transversely at $z$, then for the tangent spaces $T_{Z,z}$ and $T_{\pi^{-1}(V), z}$ at $z$ it holds
\begin{equation}
\label{eq: transversality}
    T_{Z,z} + T_{\pi^{-1}(V), z} = T_{\P(\EE),z}.
\end{equation}

To see that $W$ and $V$
intersect transversely at $x$ we show
$
T_{W, x} + T_{V, x} = T_{X, x}
$.
We apply the differential $d\pi$
to both sides of \eqref{eq: transversality} and note that we have the inclusions
\[
d\pi(T_{Z,z}) \subseteq T_{W, x}, \
d\pi (T_{\pi^{-1}(V), z}) \subseteq T_{V, x}, \
d\pi(T_{\P(\EE),z}) = T_{X, x}. \qedhere
\]
\end{proof}

\begin{proof}[Proof of \Cref{thm: C}]
By \Cref{lemma: homogeneous criticality equations are correct} the algebraic degree of sparse polynomial optimization
is the cardinality of $V \cap W \cap \C^n$.
By \Cref{lemma: transversality downstairs},
the scheme theoretic intersection $ V \cap W$ is a smooth variety of dimension zero, contained in 
$(\mathbb{C}^*)^n$.
We now finish the proof by verifying the assumptions of \Cref{cor: Porteous when intersected with V}.

Let $D_{f_i}$ be the Weyl divisors introduced in equation \eqref{eq: Weyl divisor Dfi}. By the assumptions of \Cref{thm: C}, $X$ is smooth. In particular, all divisors considered in this proof are Cartier. The variety $W$ is defined to be the degeneracy locus of the matrix $M$, whose entries are global sections of the bundle
$\OO_X(D_{f_i} -  D_{ e_j})$.
The transpose of $M$ defines a morphism $ \varphi : \EE \rightarrow \FF$ of vector bundles, where 
\[
\EE = \OO_X(-D_{f_0}) \oplus \cdots \oplus \OO_X(-D_{f_m}),  \text{ and }  \FF = \OO_X(-D_{e_1}) \oplus \cdots \oplus \OO_X(-D_{e_n}).
\]
Now $W$ is the degeneracy locus of $\varphi$, further $V \cap W$ is pure of dimension zero. Finally, $\LL_{\AA_i} = \OO_X(D_{f_i})$ which finishes the proof.
\end{proof}

\bibliographystyle{alpha}
\bibliography{adpo}

\newcommand{\etalchar}[1]{$^{#1}$}
\begin{thebibliography}{MRWW24}

\bibitem[ABB{\etalchar{+}}19]{MR3907355}
Carlos Am\'{e}ndola, Nathan Bliss, Isaac Burke, Courtney~R. Gibbons, Martin
  Helmer, Serkan Ho\c{s}ten, Evan~D. Nash, Jose~Israel Rodriguez, and Daniel
  Smolkin.
\newblock The maximum likelihood degree of toric varieties.
\newblock {\em J. Symbolic Comput.}, 92:222--242, 2019.

\bibitem[AH18]{aluffiEDComplex}
Paolo Aluffi and Corey Harris.
\newblock The {E}uclidean distance degree of smooth complex projective
  varieties.
\newblock {\em Algebra Number Theory}, 12(8):2005--2032, 2018.

\bibitem[BD15]{baaijensRealAlgGroups}
Jasmijn~A. Baaijens and Jan Draisma.
\newblock Euclidean distance degrees of real algebraic groups.
\newblock {\em Linear Algebra Appl.}, 467:174--187, 2015.

\bibitem[Ber75]{bernshtein1979the}
David~N. Bernstein.
\newblock The number of roots of a system of equations.
\newblock {\em Funkcional. Anal. i Prilo\v{z}en.}, 9(3):1--4, 1975.

\bibitem[BHSW13]{bertini}
Daniel~J. Bates, Jonathan~D. Hauenstein, Andrew~J. Sommese, and Charles~W.
  Wampler.
\newblock {\em Numerically solving polynomial systems with {B}ertini},
  volume~25 of {\em Software, Environments, and Tools}.
\newblock Society for Industrial and Applied Mathematics (SIAM), Philadelphia,
  PA, 2013.

\bibitem[BRT23]{breiding2021certifying}
Paul Breiding, Kemal Rose, and Sascha Timme.
\newblock Certifying zeros of polynomial systems using interval arithmetic.
\newblock {\em ACM Trans. Math. Software}, 49(1):Art. 11, 14, 2023.

\bibitem[BSW22]{breiding2020euclidean}
Paul Breiding, Frank Sottile, and James Woodcock.
\newblock Euclidean distance degree and mixed volume.
\newblock {\em Found. Comput. Math.}, 22(6):1743--1765, 2022.

\bibitem[CCG23]{coons2023mixed}
Jane~Ivy Coons, Mark Curiel, and Elizabeth Gross.
\newblock Mixed volumes of networks with binomial steady-states, 2023.

\bibitem[CHKS06]{MR2230921}
Fabrizio Catanese, Serkan Ho\c{s}ten, Amit Khetan, and Bernd Sturmfels.
\newblock The maximum likelihood degree.
\newblock {\em Amer. J. Math.}, 128(3):671--697, 2006.

\bibitem[CKL22]{chen2022typical}
Tianran Chen, Evgeniia Korchevskaia, and Julia Lindberg.
\newblock On the typical and atypical solutions to the kuramoto equations,
  2022.

\bibitem[CLS11]{cox2011toric}
David~A. Cox, John~B. Little, and Henry~K. Schenck.
\newblock {\em Toric varieties}, volume 124 of {\em Graduate Studies in
  Mathematics}.
\newblock American Mathematical Society, Providence, RI, 2011.

\bibitem[Cox95]{cox1995homogeneous}
David~A. Cox.
\newblock The homogeneous coordinate ring of a toric variety.
\newblock {\em J. Algebraic Geom.}, 4(1):17--50, 1995.

\bibitem[DHO{\etalchar{+}}16]{EDdegree2016}
Jan Draisma, Emil Horobe\c{t}, Giorgio Ottaviani, Bernd Sturmfels, and Rekha~R.
  Thomas.
\newblock The {E}uclidean distance degree of an algebraic variety.
\newblock {\em Found. Comput. Math.}, 16(1):99--149, 2016.

\bibitem[DLOT17]{drusvyatskiyOrthogonally}
Dmitriy Drusvyatskiy, Hon-Leung Lee, Giorgio Ottaviani, and Rekha~R. Thomas.
\newblock The {E}uclidean distance degree of orthogonally invariant matrix
  varieties.
\newblock {\em Israel J. Math.}, 221(1):291--316, 2017.

\bibitem[DM21]{DM2021}
Harm Derksen and Visu Makam.
\newblock Maximum likelihood estimation for matrix normal models via quiver
  representations.
\newblock {\em SIAM J. Appl. Algebra Geom.}, 5(2):338--365, 2021.

\bibitem[EH16]{eisenbud_harris_2016}
David Eisenbud and Joe Harris.
\newblock {\em 3264 and all that---a second course in algebraic geometry}.
\newblock Cambridge University Press, Cambridge, 2016.

\bibitem[Ful98]{Fulton+1998}
William Fulton.
\newblock {\em Intersection theory}, volume~2 of {\em Ergebnisse der Mathematik
  und ihrer Grenzgebiete. 3. Folge. A Series of Modern Surveys in Mathematics
  [Results in Mathematics and Related Areas. 3rd Series. A Series of Modern
  Surveys in Mathematics]}.
\newblock Springer-Verlag, Berlin, second edition, 1998.

\bibitem[GvBR09]{MR2496496}
Hans-Christian Graf~von Bothmer and Kristian Ranestad.
\newblock A general formula for the algebraic degree in semidefinite
  programming.
\newblock {\em Bull. Lond. Math. Soc.}, 41(2):193--197, 2009.

\bibitem[Hau13]{hauenstein2013numerically}
Jonathan~D. Hauenstein.
\newblock Numerically computing real points on algebraic sets.
\newblock {\em Acta Appl. Math.}, 125:105--119, 2013.

\bibitem[HKM23]{hofscheier2020cohomology}
Johannes Hofscheier, Askold Khovanskii, and Leonid Monin.
\newblock Cohomology rings of toric bundles and the ring of conditions.
\newblock {\em Arnold Mathematical Journal}, pages 1--51, 2023.

\bibitem[HKS05]{HostenSolving}
Serkan Ho\c{s}ten, Amit Khetan, and Bernd Sturmfels.
\newblock Solving the likelihood equations.
\newblock {\em Found. Comput. Math.}, 5(4):389--407, 2005.

\bibitem[HS95]{huber1995a}
Birkett Huber and Bernd Sturmfels.
\newblock A polyhedral method for solving sparse polynomial systems.
\newblock {\em Math. Comp.}, 64(212):1541--1555, 1995.

\bibitem[Huh13]{MR3103064}
June Huh.
\newblock The maximum likelihood degree of a very affine variety.
\newblock {\em Compos. Math.}, 149(8):1245--1266, 2013.

\bibitem[Kho78]{khovanskii1978newton}
Askold~G. Khovanskii.
\newblock Newton polyhedra, and the genus of complete intersections.
\newblock {\em Funktsional. Anal. i Prilozhen.}, 12(1):51--61, 1978.

\bibitem[Kou76]{kouchnirenko1976polyedres}
Anatoli~G. Kouchnirenko.
\newblock Poly\`edres de {N}ewton et nombres de {M}ilnor.
\newblock {\em Invent. Math.}, 32(1):1--31, 1976.

\bibitem[LAR23]{lindberg2023estimating}
Julia Lindberg, Carlos Améndola, and Jose~Israel Rodriguez.
\newblock Estimating gaussian mixtures using sparse polynomial moment systems,
  2023.

\bibitem[Lee17]{leeFermat}
Hwangrae Lee.
\newblock The {E}uclidean distance degree of {F}ermat hypersurfaces.
\newblock {\em J. Symbolic Comput.}, 80(part 2):502--510, 2017.

\bibitem[Lee19]{LeeM2}
Kisun Lee.
\newblock Certifying approximate solutions to polynomial systems on
  {Macaulay2}.
\newblock {\em ACM Commun. Comput. Algebra}, 53(2):45--48, 2019.

\bibitem[LMR23]{rose2023polyhedral}
Julia Lindberg, Leonid Monin, and Kemal Rose.
\newblock A polyhedral homotopy algorithm for computing critical points of
  polynomial programs.
\newblock {\em arXiv preprint arXiv:2302.04117}, 2023.

\bibitem[LNRW23]{lindberg2021the}
Julia Lindberg, Nathan Nicholson, Jose~I. Rodriguez, and Zinan Wang.
\newblock The maximum likelihood degree of sparse polynomial systems.
\newblock {\em SIAM J. Appl. Algebra Geom.}, 7(1):159--171, 2023.

\bibitem[MH19]{molzahn2019a}
Daniel~K. Molzahn and Ian~A. Hiskens.
\newblock A survey of relaxations and approximations of the power flow
  equations.
\newblock {\em Foundations and Trends in Electric Energy Systems},
  4(1-2):1--221, 2019.

\bibitem[MMM{\etalchar{+}}23]{manivel2023complete}
Laurent Manivel, Mateusz Micha{\l}ek, Leonid Monin, Tim Seynnaeve, and Martin
  Vodi{\v{c}}ka.
\newblock Complete quadrics: Schubert calculus for {G}aussian models and
  semidefinite programming.
\newblock {\em Journal of the European Mathematical Society}, 2023.

\bibitem[MMW21]{MR4219257}
Mateusz Micha{\l}ek, Leonid Monin, and Jaros\l aw~A. Wi\'{s}niewski.
\newblock Maximum likelihood degree, complete quadrics, and
  {$\mathbb{C}^*$}-action.
\newblock {\em SIAM J. Appl. Algebra Geom.}, 5(1):60--85, 2021.

\bibitem[MRW20a]{maximMultiview}
Laurentiu~G. Maxim, Jose~I. Rodriguez, and Botong Wang.
\newblock Euclidean distance degree of the multiview variety.
\newblock {\em SIAM J. Appl. Algebra Geom.}, 4(1):28--48, 2020.

\bibitem[MRW20b]{maximDefect}
Laurentiu~G. Maxim, Jose~Israel Rodriguez, and Botong Wang.
\newblock Defect of {E}uclidean distance degree.
\newblock {\em Adv. in Appl. Math.}, 121:102101, 22, 2020.

\bibitem[MRWW24]{maxim2023linear}
Laurentiu~G. Maxim, Jose~Israel Rodriguez, Botong Wang, and Lei Wu.
\newblock Linear optimization on varieties and {C}hern-{M}ather classes.
\newblock {\em Adv. Math.}, 437:Paper No. 109443, 22, 2024.

\bibitem[NR09]{MR2507133}
Jiawang Nie and Kristian Ranestad.
\newblock Algebraic degree of polynomial optimization.
\newblock {\em SIAM J. Optim.}, 20(1):485--502, 2009.

\bibitem[NRS10]{MR2546336}
Jiawang Nie, Kristian Ranestad, and Bernd Sturmfels.
\newblock The algebraic degree of semidefinite programming.
\newblock {\em Math. Program.}, 122(2, Ser. A):379--405, 2010.

\bibitem[PRW95]{poljak1995a}
S.~Poljak, F.~Rendl, and H.~Wolkowicz.
\newblock A recipe for semidefinite relaxation for {$(0,1)$}-quadratic
  programming.
\newblock {\em J. Global Optim.}, 7(1):51--73, 1995.

\bibitem[Rum99]{Rump1999}
Siegfried~M. Rump.
\newblock {\em INTLAB --- INTerval LABoratory}, pages 77--104.
\newblock Springer Netherlands, Dordrecht, 1999.

\bibitem[TR01]{tan2001the}
Peng~Hui Tan and L.K. Rasmussen.
\newblock The application of semidefinite programming for detection in {CDMA}.
\newblock {\em IEEE Journal on Selected Areas in Communications},
  19(8):1442--1449, 2001.

\bibitem[Vav90]{vavasis1990quadratic}
Stephen~A. Vavasis.
\newblock Quadratic programming is in {NP}.
\newblock {\em Inform. Process. Lett.}, 36(2):73--77, 1990.

\end{thebibliography}
\end{document}